\documentclass[12pt]{amsart}
\usepackage{geometry}             

\usepackage{float}
\usepackage{fullpage}
\usepackage[mathscr]{euscript}
\usepackage[all]{xy}
\usepackage{epsfig}
\usepackage[T1]{fontenc}
\usepackage{amsfonts}

\usepackage{graphicx}
\usepackage{amssymb}
\usepackage{mathrsfs}
\usepackage{epstopdf}
\usepackage{url}
\usepackage[dvipsnames,usenames]{xcolor}
\usepackage[colorlinks=true,urlcolor=blue,linkcolor=blue,citecolor=blue]{hyperref}
\usepackage{upgreek}
\hypersetup{
	linkbordercolor={1 0 0}, 
	citebordercolor={0 1 0} 
}
\usepackage{cite}
\usepackage{amsrefs}

\textwidth 6.0in    
\textheight 8.5in
\oddsidemargin.25in    
\evensidemargin.25in     
\footskip 0.5in
\headsep= 0.35in
\topmargin -0.05in

\DeclareGraphicsRule{.tif}{png}{.png}{`convert #1 `dirname #1`/`basename #1 .tif`.png}

\newcommand{\al}{\alpha}
\newcommand{\be}{\beta}
\newcommand{\ga}{\gamma}
\newcommand{\de}{\delta}

\newcommand{\ep}{\varepsilon}

\newcommand{\la}{\lambda}
\renewcommand{\phi}{\varphi}
\newcommand{\si}{\sigma}
\newcommand{\ka}{\kappa}

\newcommand{\Ga}{\Gamma}

\newcommand{\Si}{\Sigma}
\newcommand{\ZZ}{{\mathbb Z}}

\newcommand{\QQ}{{\mathbb Q}}
\newcommand{\RR}{{\mathbb R}}

\newcommand{\RP}{{\mathbb R}{\rm P}}


\newcommand{\cG}{\mathcal G}
\newcommand{\cK}{\mathcal K}
\newcommand{\cQ}{\mathcal Q}
\newcommand{\tr}{\mathsf{T}}
\newcommand{\lto}{\longrightarrow}
\newcommand{\lk}{\operatorname{\ell{\it k}}}
\newcommand{\vlk}{\operatorname{{\it v}\ell{\it k}}}
\newcommand{\nullity}{\operatorname{nul}}
\newcommand{\sig}{\operatorname{sig}}
\newcommand{\sm}{\smallsetminus}
\newcommand{\co}{\colon}
\newcommand{\fr}{{\text{fr}}}

\newcommand{\Int}{{\text{Int}}}

\newcommand{\wh}{\widehat}
\newcommand{\wt}{\widetilde}
\newcommand{\Cyl}{{\text{Cyl}}}

\newcommand*\wbar[1]{
  \hbox{ \kern-0.2em%
    \vbox{%
      \hrule height 0.5pt  
      \kern0.25ex
      \hbox{%
        \kern-0.10em
        \ensuremath{#1}%
        \kern-0.05em
      }%
    }%
  \kern0.05em}%
} 
\makeatletter
\newcommand*\bigcdot{\mathpalette\bigcdot@{.55}}
\newcommand*\bigcdot@[2]{\mathbin{\vcenter{\hbox{\scalebox{#2}{$\m@th#1\bullet$}}}}}
\makeatother

\newtheorem{theorem}{Theorem}[section]
\newtheorem{lemma}[theorem]{Lemma}
\newtheorem{proposition}[theorem]{Proposition}

\newtheorem{corollary}[theorem]{Corollary}

\theoremstyle{definition}     
\newtheorem{definition}[theorem]{Definition}

\theoremstyle{remark}
\newtheorem{remark}[theorem]{Remark}
\newtheorem{example}[theorem]{Example}

\title[The Gordon-Litherland pairing for links in thickened surfaces]{The Gordon-Litherland pairing for \\ links in thickened surfaces}
\author[H. U. Boden]{Hans U. Boden}
\address{Mathematics \& Statistics, McMaster University, Hamilton, Ontario}
\email{boden@mcmaster.ca}

\author[M. Chrisman]{Micah Chrisman}
\address{Mathematics, The Ohio State University, Marion, Ohio}
\email{chrisman.76@osu.edu}

\author[H. Karimi]{Homayun Karimi}
\address{Mathematics \& Statistics, McMaster University, Hamilton, Ontario}
\email{karimih@mcmaster.ca}

\subjclass[2020]{57K10 (primary), 57K12 (secondary)}
\keywords{knot, link, spanning surface, Gordon-Litherland pairing, signature, nullity, checkerboard coloring, intersection form, Kirby diagram, Tait graph, Goeritz matrix, virtual knot, virtual link, Seifert surface.}


\pagestyle{headings}

\begin{document}

\begin{abstract}
We extend the Gordon-Litherland pairing to links in thickened surfaces, and use it to define signature, determinant, and nullity invariants for links that bound (unoriented) spanning surfaces. The invariants are seen to depend only on the $S^*$-equivalence class of the spanning surface. We prove a duality result relating the invariants from one $S^*$-equivalence class of spanning surfaces to the restricted invariants of the other.

Using Kuperberg's theorem, these invariants give rise to well-defined invariants of checkerboard colorable virtual links. The determinants can be applied to determine the minimal support genus of a checkerboard colorable virtual link. The duality result leads to a simple algorithm for computing the invariants from the Tait graph associated to a checkerboard coloring. We show these invariants simultaneously generalize the combinatorial invariants defined by Im, Lee, and Lee, and those defined by Boden, Chrisman, and Gaudreau for almost classical links. 

We examine the behavior of the invariants under orientation reversal, mirror symmetry, and crossing change. We give a 4-dimensional interpretation of the Gordon-Litherland pairing by relating it to the intersection form on the relative homology of certain double branched covers. This correspondence is made explicit through the use of virtual linking matrices associated to (virtual) spanning surfaces and their associated (virtual) Kirby diagrams.
\end{abstract}

\maketitle
\section*{Introduction}

In \cite{GL-1978}, Gordon and Litherland defined a symmetric, bilinear form for links in $S^3$ together with a choice of unoriented spanning surface. The associated quadratic form was shown to simultaneously generalize the forms of Goeritz and Trotter. They used it to give a simple method to compute the signature for any knot or link from a regular projection. 

It is an open problem to extend the pairing to links in arbitrary 3-manifolds. For instance, Greene has extended it to $\ZZ_2$-homology spheres in the recent paper \cite{greene}. In this paper we will extend the pairing to links in thickened surfaces, and we will use it to define signatures, determinants, and nullities  for links in thickened surfaces and for virtual links.

The general problem of defining signature invariants for links in arbitrary 3-manifolds was studied by Cooper \cite{Cooper} and Mandelbaum--Moishezon \cite{Mandelbaum-Moishezon-1983}. Those papers focus exclusively on links which bound oriented spanning surfaces, namely Seifert surfaces.  A link in a 3-manifold $M$ admits a Seifert surface if and only if it is homologically trivial, and in that case the homology group $H_2(M;\ZZ)$ acts transitively on the set of Seifert surfaces. The resulting signatures depend strongly on the choice of Seifert surface; for instance  when $H_2(M;\ZZ)$ is infinite, there are possibly infinitely many distinct signature invariants  \cite{Cimasoni-Turaev}.

In this paper, we consider the problem of defining signatures for links which bound \textit{unoriented} spanning surfaces. A link in a 3-manifold admits an unoriented spanning surface if and only if it is $\ZZ_2$-homologically trivial, and in the following we focus our attention on  links in thickened surfaces.

Let $\Si$ be a compact, closed, oriented surface, and suppose $L$ is a link in $\Si \times I$ bounding an unoriented spanning surface $F \subset \Si \times I$. Associated to the surface $F$ is a symmetric, bilinear form $\cG_F \co H_1(F;\ZZ) \times H_1(F;\ZZ) \to \ZZ$ called the \textit{Gordon-Litherland pairing}. We show that the signature of this pairing, together with a correction term, gives a signature invariant for $L$, which depends on the choice of spanning surface $F$ only through its $S^*$-equivalence class. The link determinant and nullity invariants are defined similarly, and they are also invariant under $S^*$-equivalence. For classical links, any two spanning surfaces are $S^*$-equivalent, but this is no longer true for links in thickened surfaces. In fact, for a $\ZZ_2$-homologically trivial non-split link in a thickened surface of positive genus, there are two $S^*$-equivalence classes of spanning surfaces. Consequently,  associated to any such link are two signatures, two determinants, and two nullities.  

We highlight some of the main results and applications of the Gordon-Litherland pairing for links in thickened surfaces. The first is Theorem \ref{thm:vg}, which shows that link determinants give simple and easy-to-calculate criteria for a link $L \subset \Si \times I$ to have minimal support genus (see Section \ref{subsec:1-1}). Thus, the link determinants detect the virtual genus.  Another is Theorem \ref{thm:chrom-dual}, which relates the invariants for a given link $L$ with spanning surface $F$ to the invariants for the dual surface $F^*$ under restriction to the kernel of the map $H_1(F^*;\ZZ) \to H_1(\Si \times I;\ZZ)$. We apply this result to checkerboard colorable virtual links to establish a correspondence relating the invariants defined using the Gordon-Litherland pairing with the invariants defined using Goeritz matrices \cite{Im-Lee-Lee-2010}. One important aspect of this correspondence is the principle of chromatic duality, which stipulates that the colors switch under passing from one family of invariants to the other. 

In Corollary \ref{cor-ac1}, we relate the invariants introduced here to the invariants for almost classical links defined using the Seifert pairing \cite{Boden-Chrisman-Gaudreau-2017a}. Thus, our invariants simultaneously generalize those  for checkerboard colorable links  \cite{Im-Lee-Lee-2010}, and those  for almost classical links \cite{Boden-Chrisman-Gaudreau-2017a}. In particular, these two sets of invariants are seen to be equal,  which was not previously known.

The main result in Section \ref{sec-6} is Theorem \ref{thm:isom}, which is an analogue to \cite[Theorem 3]{GL-1978} and  gives a 4-manifold interpretation of the Gordon-Litherland pairing as the intersection form of a double branched cover. Let $F \subset \Si \times I$ be a spanning surface, and let $W$ be a 3-manifold with $\partial W = \Si.$ Theorem \ref{thm:isom} asserts that the pairing $\cG_F$ is equivalent to the intersection pairing on the relative homology of the mirror double cover of $W \times I$ branched along $F$. One curious aspect of this is that the pairing and the intersection form are independent of the choice of the 3-manifold $W$. Associated to a (virtual) spanning surface $F$ is a (virtual) Kirby diagram which gives an explicit description of the mirror double branched cover. Theorem \ref{thm:virtual_GL} then equates the Gordon-Litherland pairing with the (virtual) linking matrix of the associated (virtual) Kirby diagram.
 
There are a number of other results proved here, and we briefly mention a few. For instance, Section \ref{sec:cc} introduces crosscap numbers for virtual knots, and Theorem \ref{thm-min} shows that they can always be realized by minimal genus representatives. Section \ref{sec:vss} introduces virtual spanning surfaces, and Theorem \ref{thm:realization} shows that any allowable integral symmetric matrix occurs as the Gordon-Litherland pairing for some checkerboard colorable virtual knot.  Theorem \ref{thm:cc}  describes the effect of crossing change on the link signature, and Proposition \ref{prop:mirror_image} relates the signature, determinant, and nullity invariants of a link with its horizontal and vertical mirror images.

Here is a short outline of the contents of this paper. In Section \ref{sec-1}, we review the basic notions for links in thickened surfaces and their spanning surfaces. In Section \ref{sec-2}, we introduce the Gordon-Litherland pairing and use it to define   invariants (signature, determinant, and nullity) for links in terms of their spanning surfaces. In Section \ref{sec-3}, we show that the link determinants give sufficient conditions for a link $L \subset \Si \times I$ to have minimal support genus. In Section \ref{sec-4}, we prove a duality result which relates the invariants for a given spanning surface to the restricted invariants for the dual surface. In Section \ref{sec-5}, we  present a simple procedure for computing the signature, determinant, and nullity invariants for a checkerboard colorable link in terms of its Tait graph and associated Goeritz matrix. In Section \ref{sec-6}, we interpret the Gordon-Litherland pairing as the relative intersection form on the 4-manifold given as the mirror double cover of a thickening of a 3-manifold with boundary $\Si$ branched along a copy of the spanning surface pushed into the interior.

\medskip
\noindent\textit{Notation.} 
Decimal numbers such as 2.1 and 3.7 refer to virtual knots in Green's tabulation \cite{green}.

\section{Spanning surfaces for links in thickened surfaces} \label{sec-1}
In this section, we introduce the basic properties for links in thickened surfaces and virtual links, including spanning surfaces, checkerboard colorability, and $S^*$-equivalence. In the last subsection, we introduce crosscap numbers for virtual knots and show that the crosscap numbers are always realized on a minimal genus representative.   

\subsection{Links in thickened surfaces and virtual links} \label{subsec:1-1}
Let  $I = [0,1]$ denote the unit interval and let $\Si$ be a compact, connected, oriented surface.  A \textit{link in the thickened surface} $\Si \times I$ is an embedding $L \co \bigsqcup_{i=1}^\mu S^1 \hookrightarrow \Si \times I$, considered up to isotopy.

A \textit{link diagram} on $\Si$ is an embedded tetravalent graph whose vertices indicate over- and under-crossings in the usual way. Two link diagrams represent isotopic links if and only if they are equivalent by local Reidemeister moves.

Let $p\co \Si \times I \to \Si$ be the projection onto the first factor. For a link $L \subset \Si \times I$, using an isotopy, we can arrange that the image of $L$ under projection $p$ is regular. This means that $p \circ L \co \bigsqcup_{i=1}^\mu S^1 \looparrowright \Si$ is an immersion with finitely many singular points, each of which is a double point. Thus, a regular projection of a link $L \subset \Si \times I$ determines a link diagram of $L$.

Let $\#L$ denote the number of components in the link $L$. A link $L$ with $\#L=1$ is called a \textit{knot}.  An orientation on a link is indicated by placing arrows on the components of its link diagram. Given an oriented link $L$ in $\Si \times I$, we use $-L$ to denote the same link  with the opposite orientation.  

\begin{figure}[h]
\centering
 \includegraphics[scale=1.10]{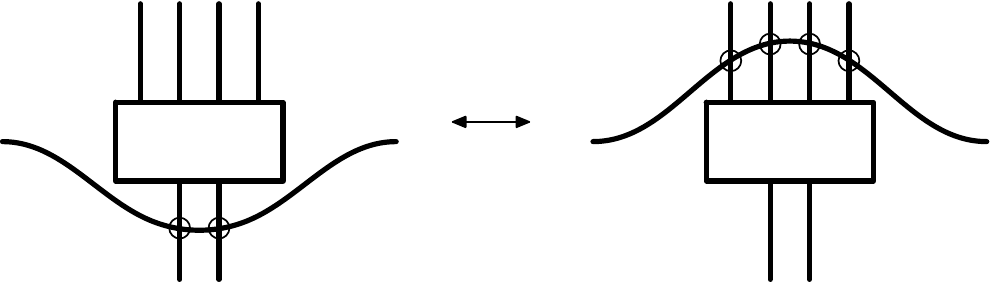} 
 \caption{The detour move.}
\label{detour}
\end{figure} 

A virtual link diagram is an immersion of one or several circles in the plane with only double points, such that each double point is either classical (indicated by over- and under-crossings) or virtual (indicated by a circle). Two virtual link diagrams are said to be equivalent if they can be related by planar isotopies, Reidemeister moves, and the detour move depicted in Figure \ref{detour}. 

Equivalently, a virtual link can be defined as a stable equivalence class of links in thickened surfaces.  Let $L \subset \Si \times I$ be a link in the thickened surface, where $\Si$ is a compact, connected, oriented surface. Stabilization refers to the operation of adding a handle to $\Si$ to obtain a new surface $\Si'$.  Specifically, if $D_0$ and $D_1$ are two disjoint disks in $\Si$ which are disjoint from the image of $L$ under projection $\Si\times I\to \Si$, then $\Si'$ is the surface with $g(\Si') = g(\Si) + 1$ obtained by attaching a 1-handle $h^1 = S^1 \times I$ to $\Si \sm (D_0 \cup D_1)$ so that $\partial h^1 = \partial D_0 \cup \partial D_1$. (Here, $g(\Si)$ denotes the genus of the surface.) This operation is referred to as \textit{stabilization}, and the opposite procedure is called \textit{destablization}. It involves cutting along a vertical annulus in $\Si \times I$ disjoint from the link and attaching two thickened 2-disks. 

Two links $L \subset \Si\times I$ and $L' \subset \Si' \times I$ are said to be \textit{stably equivalent} if one is obtained from the other by a finite sequence of stabilizations, destablizations, and orientation preserving diffeomorphisms of the pairs  $(\Si\times I, \Si \times \{0\})$ and $(\Si'\times I, \Si' \times \{0\})$. In \cite{Carter-Kamada-Saito}, Carter, Kamada, and Saito give a one-to-one correspondence between virtual links and stable equivalence classes of links in thickened surfaces.

A virtual link is called \textit{split}  if it can be represented by a disconnected virtual link diagram in the plane. Likewise, a link $L \subset \Si \times I$ is said to be \textit{split} if it can be represented by a disconnected diagram $D$ on $\Si$. Clearly, a virtual link is split if and only if it can be represented by a split link in a thickened surface.  When that is not the case, we will say that $L$ is \textit{non-split}.

A link diagram $D \subset \Si$ is said to be \textit{cellularly embedded} if $\Si \sm D$ is a union of disks. Of course, given any link $L \subset \Si \times I$, one can successively apply destabilizations until its diagram under projection $p\co \Si \times I \to \Si$ is cellularly embedded. In particular, any virtual link can be represented as a link diagram $D \subset \Si$ which is cellularly embedded.

A link $L \subset \Si \times I$ is said to have \textit{minimal support genus} if it cannot be destabilized. A link of minimal support genus in a closed surface is necessarily cellularly embedded. (The converse to this last statement is false.) In \cite{Kuperberg}, Kuperberg proved that the minimal genus representative of a virtual link is unique up to diffeomorphism. This minimal genus is called the \textit{virtual genus} of the virtual link. By Kuperberg's theorem, if $L \subset \Si \times I$ has minimal support genus, then the associated virtual link has virtual genus equal to $g(\Si).$

  
\subsection{Spanning surfaces and checkerboard colorable links}
Let $L$ be a link in  the thickened surface $\Si \times I$. A \textit{spanning surface} for $L$ is a compact surface $F \subset \Si \times I$ with $\partial F = L.$ Spanning surfaces are not assumed to be connected or oriented, but it will be assumed that they do not contain any closed components. A spanning surface that is oriented and connected will be called a \textit{Seifert surface} for $L$. 

A link diagram $D \subset \Si$ is called \textit{checkerboard colorable} if the components of $\Si \sm D$ can be colored by two colors, say black and white, such that any two components of $\Si \sm D$ that share an edge have different colors. A link  $L \subset \Si \times I$ is said to be \textit{checkerboard colorable} if it admits a checkerboard colorable link diagram in $\Si$.

Suppose the link admits a diagram $D$ which is cellularly embedded and checkerboard colorable. Let $\xi$ denote the  checkerboard coloring. Then the black regions of $\xi$ determine a spanning surface $F_\xi$ for $L$ which we call the \textit{checkerboard surface}. The surface $F_\xi$ is the union of disks and bands, with one disk for each black region and one half-twisted band for each crossing. In constructing $F_\xi \subset \Si \times I$, we place all the disks in $\Si \times \{1/2\}$.

\begin{figure}[ht]
\centering
\includegraphics[scale=1.15]{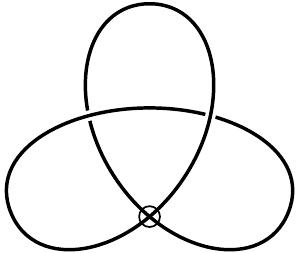} \qquad \qquad \qquad \includegraphics[scale=1.10]{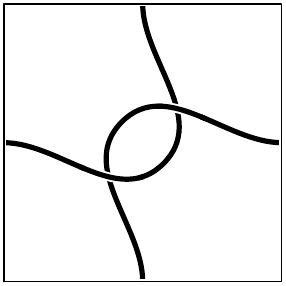}
\caption{The virtual knot 2.1 as a diagram (left) and as a knot on the torus (right).}
\label{2-1}
\end{figure}

\begin{figure}[ht]
\centering
\includegraphics[scale=1.30]{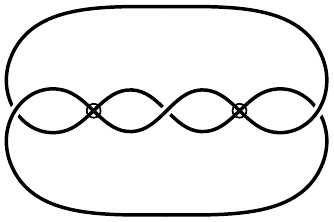} \qquad \qquad  
\includegraphics[scale=1.10]{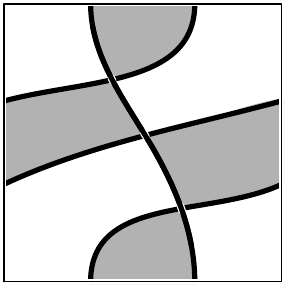} 
\caption{The virtual knot 3.5 as a diagram (left) and as a checkerboard colored knot on the torus (right).}
\label{3-5}
\end{figure}

The next result gives a useful characterization of checkerboard colorability. For a proof, see \cite[Prop. 1.7]{Boden-Karimi-2019}.
 
\begin{proposition} \label{prop:equiv}
Given a link  $L \subset \Si \times I$ in a thickened surface, the following are equivalent:
\begin{enumerate}
\item[(i)] $L$ is checkerboard colorable.
\item[(ii)] $L$ is the boundary of an unoriented spanning surface $F \subset \Si \times I.$
\item[(iii)] $[L]=0$ in the homology group $H_1(\Si \times I; \ZZ_2).$
\end{enumerate}
\end{proposition}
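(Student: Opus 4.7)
My plan is to establish (i), (ii), (iii) via a cyclic chain of implications. The first two are direct. For (i) $\Rightarrow$ (ii), I would invoke the checkerboard surface construction described just before the proposition: take the closure of the union of the black regions in $\Si$, push it into $\Si \times \{1/2\}$, and insert a half-twisted band at each crossing in accordance with the local over/under data. This produces an unoriented surface $F \subset \Si \times I$ with $\partial F = L$. For (ii) $\Rightarrow$ (iii), the spanning surface $F$ supplies a $\ZZ_2$ singular 2-chain in $\Si \times I$ whose boundary is $[L]$, so $[L] = 0$ in $H_1(\Si \times I; \ZZ_2)$.

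The substantive step is (iii) $\Rightarrow$ (i). I would pick a link diagram $D \subset \Si$ of $L$ and construct a locally constant $\ZZ_2$-valued function $f$ on $\Si \sm D$ by path-integration: fix a basepoint $x_0 \in \Si \sm D$, and for each $y \in \Si \sm D$ set $f(y) \equiv \#(\gamma \cap D) \pmod 2$, where $\gamma$ is any smooth path from $x_0$ to $y$ transverse to $D$. The well-definedness of $f$ is equivalent to $\#(\alpha \cap D) \equiv 0 \pmod 2$ for every loop $\alpha$ in $\Si$ transverse to $D$. Because $\Si \times I$ deformation retracts onto $\Si$, we have $[L] = [D]$ in $H_1(\Si; \ZZ_2)$, and by transversality $\#(\alpha \cap D) \pmod 2$ equals the $\ZZ_2$-intersection pairing $[\alpha] \cdot [L]$ on $\Si$. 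Under hypothesis (iii) this vanishes for every $\alpha$, so $f$ is well-defined. The function $f$ is constant on each connected component of $\Si \sm D$ and toggles across each arc of $D$, so coloring $f^{-1}(0)$ white and $f^{-1}(1)$ black yields the desired checkerboard coloring.

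The main subtlety to confront is whether an arc of $D$ could have the same component of $\Si \sm D$ on both of its sides, which would be an immediate obstruction to any 2-coloring. However, in that situation one can construct a short loop $\alpha$ in $\Si$ crossing $D$ exactly once, forcing $[\alpha] \cdot [L] = 1$ and contradicting (iii). Thus the hypothesis $[L] = 0$ automatically rules out this pathology, and the coloring produced by $f$ is legitimate. A pleasant feature of the intersection-number formulation is that it does not require the diagram $D$ to be cellularly embedded, so no auxiliary destabilization is needed for the argument to go through.
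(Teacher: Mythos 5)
Your argument is correct. The chain (i) $\Rightarrow$ (ii) via the black-regions-plus-half-twisted-bands construction, (ii) $\Rightarrow$ (iii) by treating $F$ as a $\ZZ_2$-2-chain, and (iii) $\Rightarrow$ (i) via the mod-2 winding function $f$ all work, and your observation that the well-definedness of $f$ (equivalently, the vanishing of the $\ZZ_2$-intersection pairing against every loop) automatically precludes an arc having the same face on both sides is exactly the right way to see that the 2-coloring is legitimate.

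However, your route through (iii) $\Rightarrow$ (i) is genuinely different from the one the paper relies on. The paper cites an external proof, and later (in the proof of Proposition~\ref{prop-S*}) it explicitly leans on a byproduct of that proof: namely, that any spanning surface $F$ can be turned into a checkerboard surface for some diagram of $L$ by an ambient isotopy together with attaching 1-handles, i.e., by moves of type (a) and (b) of $S^*$-equivalence. In other words, the cited proof establishes (ii) $\Rightarrow$ (i) by a geometric/surgical argument that normalizes an arbitrary spanning surface into a disk-band (checkerboard) form. Your approach instead goes (iii) $\Rightarrow$ (i) by a purely algebraic-topological argument using the nondegeneracy-free direction of the $\ZZ_2$-intersection pairing on $\Si$. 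What your approach buys is brevity and the pleasant independence from cellular embedding that you note. What it does not buy is the extra structural statement that every spanning surface is, up to isotopy and tubing, a checkerboard surface --- which the paper needs later when proving that all spanning surfaces fall into at most two $S^*$-equivalence classes. So your proof is a valid and cleaner proof of Proposition~\ref{prop:equiv} itself, but it would leave a gap elsewhere in the paper if substituted wholesale, since Proposition~\ref{prop-S*} would then need the normalization step to be supplied separately.
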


\begin{remark}
In a similar way, one can show that an oriented link $L \subset \Si \times I$ bounds a Seifert surface if and only if it is homologically trivial, i.e., if and only if $[L] = 0$ in $H_1(\Si \times I; \ZZ)$, see \cite{Boden-Gaudreau-Harper-2016}. 
\end{remark}

If $L$ has checkerboard coloring $\xi$, then the \textit{dual coloring} $\xi^*$ is obtained by switching the black and white regions of $\xi$. The dual checkerboard surface $F_{\xi^*}$ therefore coincides with the one given by the white regions of $\xi.$

A virtual link $L$ is said to be \textit{checkerboard colorable} if it can be represented by a checkerboard colorable link in a thickened surface. It is well-known that every classical link is checkerboard colorable, but that is not true in general for links in thickened surfaces or for virtual links. For example, the virtual knot 2.1, shown as a knot in the torus in Figure \ref{2-1}, is not checkerboard colorable, whereas the knot 3.5 in Figure \ref{3-5} is checkerboard colorable.

A virtual link that can be represented as a homologically trivial link in a thickened surface is called \textit{almost classical}. Every almost classical link is checkerboard colorable, but not all checkerboard colorable links are almost classical. For example, the virtual knot 3.5 in Figure \ref{3-5} is checkerboard colorable but not almost classical.


\subsection{Virtual spanning surfaces} \label{sec:vss}
In this subsection, we recall the notion of virtual spanning surfaces. Virtual Seifert surfaces are introduced in \cite{Chrisman-2017}. Here we extend the definition to include nonorientable surfaces. 

\begin{definition} \label{defn:vbs}
A \textit{virtual spanning surface} is a finite union of disjoint disks $D_1,\ldots, D_n$ in $\RR^2$, together with a finite collection of bands $B_1,\ldots, B_m$ in $\RR^2 \sm \Int(D_1 \cup \cdots \cup D_n)$, connecting the disks. Each band can have (classical) twists, and in any region of the plane, at most two bands can intersect. The bands intersect in either  classical or  virtual crossings, as in Figure \ref{vc-band-crossing}. When the surface is oriented, we call it a \textit{virtual Seifert surface}.
\end{definition}

In \cite[Definition 3.1]{Boden-Chrisman-Gaudreau-2017a}, these are called \textit{virtual disk-band surfaces}. In this paper, we reserve that term for when there is just one disk, i.e., for when $n=1$ in the above definition.

\begin{figure}[ht]
\includegraphics[scale=0.80]{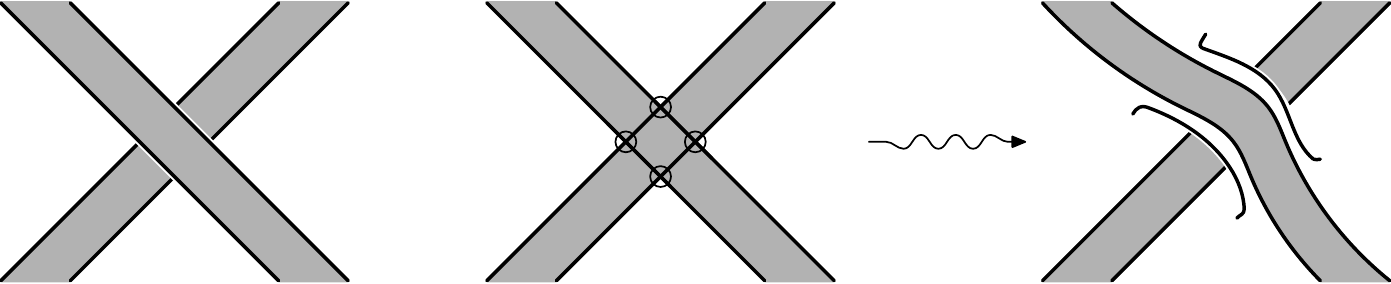}  
\caption{A classical band crossing (left) and virtual band crossing (middle). Attaching a 1-handle allows one of the bands to pass over the other (right).} \label{vc-band-crossing}
\end{figure}

\begin{figure}[ht]
\centering
\includegraphics[scale=.95]{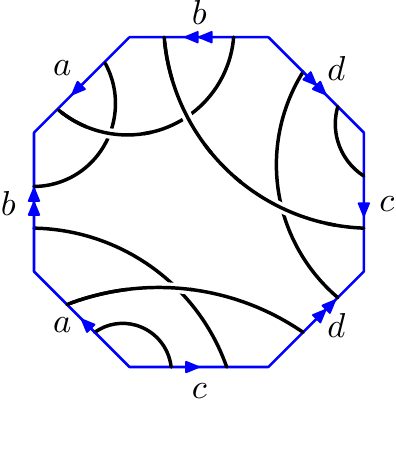}  \hspace{.3cm} 
\includegraphics[scale=.95]{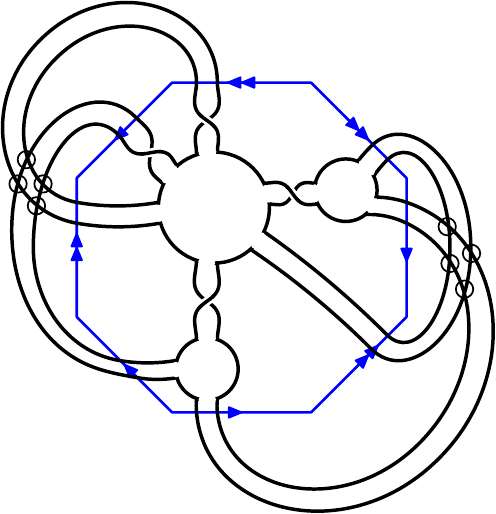}\hspace{.3cm}
\includegraphics[scale=.95]{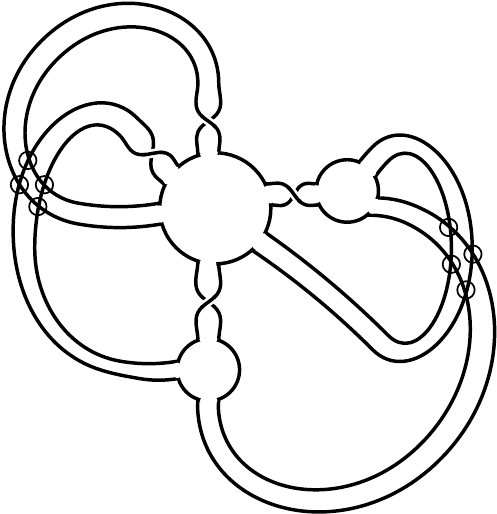} 
\caption{The virtual knot 4.98 and its virtual spanning surface.}
\label{virtual-spanning-surface}
\end{figure}

The ``boundary'' of a virtual spanning surface gives a virtual link diagram. To any virtual spanning surface, we can associate a spanning surface $F$ in a thickened surface $\Si \times I$ in the following natural way. View the virtual spanning surface in $S^2 \times I$, and attach 1-handles to $S^2$ at each virtual band crossing to allow one of the bands to travel along the 1-handle over the other band as in Figure \ref{vc-band-crossing}. The result is a spanning surface in $\Si \times I,$ where $\Si$ has genus equal to the number of virtual band crossings of $F$. The boundary of this spanning surface is a link in $\Si \times I$ representing the virtual link $L$.  Conversely, every spanning surface for a link in $\Si \times I$ can be represented as a virtual spanning surface. This can be proved using a modification of the argument for oriented spanning surfaces, see \cite[Lemma 3.2]{Boden-Chrisman-Gaudreau-2017a} and \cite{Chrisman-2017}. An example of this correspondence is shown in Figure \ref{virtual-spanning-surface}.


\subsection{\boldmath{$S^*$}-equivalence} \label{section:S-star-equivalence}
In this subsection, we recall the notion of $S^*$-equivalence of spanning surfaces, and we establish necessary and sufficient conditions that two spanning surfaces of a given link are $S^*$-equivalent.

Given a spanning surface $F$ and 1-handle $h^1 = I \times D^2 $ in $\Si \times I \sm \partial F$ such that $F \cap h^1 = \partial I \times D^2$, we can construct a new surface by removing the two 2-disks $\partial I \times D^2$ from $F$ and  attaching the annulus $I \times \partial D^2.$ In that case, we say the new surface is obtained from $F$ by attaching a 1-handle.

\begin{definition} \label{defn:S*-equiv}
Two spanning surfaces are \textit{$S^*$-equivalent} if one can be obtained from the other by a finite sequence of the following three moves:
\begin{enumerate}
\item[(a)]Ambient isotopy.
\item[(b)]Attaching a small tube, or its removal.
\item[(c)]Attaching a small half-twisted band, or its removal (see Figure \ref{half-twisted-band}).
\end{enumerate}
\end{definition}

\begin{figure}[ht]
\centering
\includegraphics[scale=1.60]{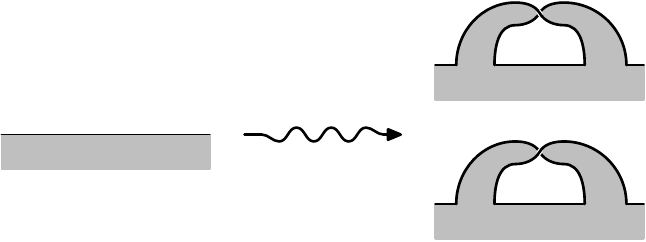} 
\caption{Attaching a small half-twisted band.}
\label{half-twisted-band}
\end{figure}

For classical links, by \cite[Theorem 11]{GL-1978}, any two spanning surfaces are $S^*$-equivalent. For links in thickened surfaces, the situation is more complicated.  Figure \ref{2-1} shows by example that not all knots in thickened surfaces admit spanning surfaces, and Proposition \ref{prop:equiv} implies that a link in $\Si \times I$ admits a spanning surface if and only if it is checkerboard colorable. In that case, the checkerboard surface $F_\xi$ need not be $S^*$-equivalent to the dual checkerboard surface $F_{\xi^*}$. For classical links, an explicit $S^*$-equivalence between the black and white surfaces is given in \cite[Fig.~3]{Yasuhara-2014}, but for links in surfaces of genus $g >0$, the black and white surfaces are not $S^*$-equivalent. This last fact will follow from Equation \eqref{span-and-dual} and Lemma \ref{lemma-S*} below.

Relative homology gives an easy way to distinguish the $S^*$-equivalence classes, as we now explain. Every spanning surface $F$ determines  an element in $H_2(\Si \times I, L;\ZZ_2)$. From the long exact sequence in homology for the pair $(\Si \times I, L)$, we get the exact sequence
$$\xymatrix{
H_2(\Si \times I; \ZZ_2) \ar[r]^{j_*\ \ } &H_2(\Si \times I, L; \ZZ_2) \ar[r]^{\ \quad \de_*} & H_1(L; \ZZ_2) \ar[r]^{i_*\ \ \ }& H_1(\Si \times I;\ZZ_2).}$$ 
Notice that the first map $j_*$ is injective, since $H_2(L;\ZZ_2)=0$, and also that the fundamental class $[L]$ lies in the image of the second map $\de_*$ since $L$ being checkerboard colorable implies that $[L]$ is trivial in $H_1(\Si \times I;\ZZ_2)$. 

It follows that $H_2(\Si \times I, L; \ZZ_2)$ has rank $r \geq 2$. This uses the facts that $H_2(\Si \times I; \ZZ_2) \cong \ZZ_2$, with generator the fundamental class $[\Si],$  and that $H_1(L; \ZZ_2) \cong (\ZZ_2)^\mu$, with generators $[K_1], \ldots, [K_\mu]$ given by the components of  $L = K_1 \cup \cdots \cup K_\mu$. Notice further that if $F_\xi$ is a checkerboard surface and $F_{\xi^*}$ is the dual surface, then 
\begin{equation} \label{span-and-dual}
[F_\xi] + [F_{\xi^*}] = j_*([\Si]).
\end{equation}
In particular, it follows that $[F_\xi]\neq [F_{\xi^*}]$ in $H_2(\Si \times I, L; \ZZ_2).$ 

\begin{lemma} \label{lemma-S*}
Suppose $L \subset \Si \times I$ is a checkerboard colorable link in a thickened surface of genus $g(\Si)>0.$ If $F_1$ and $F_2$ are $S^*$-equivalent spanning surfaces for $L$, then $[F_1] = [F_2]$ as elements in $H_2(\Si \times I, L; \ZZ_2).$
\end{lemma}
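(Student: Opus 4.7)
My plan is to show that the homology class $[F] \in H_2(\Sigma \times I, L; \mathbb{Z}_2)$ is invariant under each of the three moves defining $S^*$-equivalence in Definition \ref{defn:S*-equiv}. Since any $S^*$-equivalence is a finite composition of moves (a), (b), (c), invariance under each move suffices.

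Move (a) is immediate: an ambient isotopy of the pair $(\Sigma \times I, L)$ sends a relative 2-cycle to a homologous one, so $[F_1] = [F_2]$ when $F_2$ is ambient isotopic to $F_1$.

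For move (b), suppose $F_2$ is obtained from $F_1$ by attaching a 1-handle $h^1 = I \times D^2$ with $F_1 \cap h^1 = \partial I \times D^2$. Then as $\mathbb{Z}_2$-chains, the symmetric difference $F_1 + F_2$ equals $(\partial I \times D^2) + (I \times \partial D^2) = \partial h^1$, which is a 2-sphere bounding the 3-ball $h^1 \subset \Sigma \times I$. Hence $[F_1] + [F_2] = 0$ in $H_2(\Sigma \times I; \mathbb{Z}_2)$, and the same equation holds after applying the map $j_*\co H_2(\Sigma \times I; \mathbb{Z}_2) \to H_2(\Sigma \times I, L; \mathbb{Z}_2)$. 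Since we are working with $\mathbb{Z}_2$ coefficients, this gives $[F_1] = [F_2]$.

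For move (c), suppose $F_2 = F_1 \cup B$ where $B$ is the small half-twisted band attached along an arc $\alpha \subset \partial F_1 \subset L$, so that the opposite edge $\alpha'$ of $B$ replaces $\alpha$ to form $\partial F_2 = L$ (up to isotopy in $L$, which is absorbed into move (a)). Choose a 3-ball $B^3 \subset \Sigma \times I$ containing $B$ and meeting $L$ in a single arc. Because $B^3$ is contractible and $B^3 \cap L$ is a disk (arc), the long exact sequence of the pair gives $H_2(B^3, B^3 \cap L; \mathbb{Z}_2) = 0$, so the band $B$ is null-homologous rel boundary inside $B^3$. Pushing this forward by inclusion, $[B] = 0$ in $H_2(\Sigma \times I, L; \mathbb{Z}_2)$, so $[F_2] = [F_1] + [B] = [F_1]$.

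The main point to get right is the chain-level description of the modification and the identification of the cobounding region, especially for move (c), where one must verify that the local 3-ball intersects $L$ in a disk so that the relative $H_2$ of that ball vanishes. Once these local pictures are set up correctly, the three cases combine to give the lemma, and the hypothesis $g(\Sigma) > 0$ is actually not needed for this direction: the failure of the converse (noted via Equation \eqref{span-and-dual}) is what makes the positive-genus hypothesis interesting elsewhere.
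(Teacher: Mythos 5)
Your proof is correct, and while the overall strategy (invariance under each of the three generating moves) is the same as the paper's, your treatment of move (c) is genuinely different. The paper does not argue move (c) directly: it first reduces to the case where $F_1$ is a checkerboard surface (invoking the fact, from the proof of Proposition~\ref{prop:equiv}, that every spanning surface is $S^*$-equivalent to a checkerboard one by moves of type (a) and (b)), then observes that the dual surfaces $F_1^*$ and $F_2^*$ differ only by an isotopy, and finally deduces $[F_1]=[F_2]$ from Equation~\eqref{span-and-dual}. Your argument instead shows directly that the half-twisted band modification is supported in a small $3$-ball $B^3$ meeting $L$ in a single arc, so that the $\ZZ_2$-chain $F_1+F_2$ is a relative $2$-cycle in $(B^3,\, B^3\cap L)$, which is trivial because $H_2(B^3, B^3\cap L;\ZZ_2)=0$. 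This is more elementary and self-contained, avoiding both the reduction to checkerboard surfaces and the appeal to \eqref{span-and-dual}; the paper's route, on the other hand, re-uses machinery it has already set up and keeps the argument short given that context. Your handling of move (b) via the identity $F_1+F_2=\partial h^1$ over $\ZZ_2$ is essentially the same as the paper's cobounding-$3$-manifold argument, just phrased at the chain level. Your closing remark that $g(\Sigma)>0$ is not needed for this implication is also correct; the hypothesis is there because the lemma is only interesting (and only used) when there can be more than one $S^*$-class, and in genus zero all spanning surfaces are $S^*$-equivalent by \cite[Theorem 11]{GL-1978}.

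One small point worth making explicit in move (c): before comparing $F_1$ and $F_2$ in $H_2(\Sigma\times I, L;\ZZ_2)$, you need both to have boundary exactly $L$, whereas attaching the band replaces the arc $\alpha\subset L$ by $\alpha'$. You flag this ("absorbed into move (a)"), and the cleanest way to finish is to choose the correcting isotopy to be supported in the same ball $B^3$; then $F_1$ and the corrected $F_2$ agree outside $B^3$, their $\ZZ_2$-sum is a relative cycle in $(B^3, B^3\cap L)$, and the vanishing of $H_2(B^3, B^3\cap L;\ZZ_2)$ gives the conclusion exactly as you say.
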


\begin{proof}
This follows by showing that the relative homology class of a spanning surface is invariant under the three moves (a), (b), (c) of Definition \ref{defn:S*-equiv} that generate $S^*$-equivalence. For the first move, this holds because homology is an isotopy invariant. For the second move, if $F_2$ is obtained from $F_1$ by the addition of a small tube, then $F_1$ and $F_2$ cobound the 3-manifold $F_1 \cup -F_2 \cup \text{(1-handle)}$ in $\Si \times I$. This implies that $F_1$ and $F_2$ are homologous. 

For the third move, since we have already seen that every spanning surface is $S^*$-equivalent to a checkerboard surface by isotopies and 1-handle moves, we can assume that $F_1$ is a checkerboard surface. In that case, notice that the dual surface $F_2^*$ is obtained from $F_1^*$ by an isotopy. Thus, by the previous argument, we see  that $[F_1^*] = [F_2^*]$, and so Equation \eqref{span-and-dual} implies that $[F_1]=[F_2]$ whenever $F_2$ is obtained from $F_1$ by the addition of a half-twisted band. This completes the proof.
\end{proof}
 
The next result is a generalization of \cite[Theorem 11]{GL-1978} for checkerboard colorable links in thickened surfaces.

\begin{proposition} \label{prop-S*}
Let $L \subset \Si \times I$ be a non-split link admitting a checkerboard colored diagram $D$ on $\Si$. Then any spanning surface for $L$ is $S^*$-equivalent to the checkerboard surface $F_\xi$ or its dual $F_{\xi^*}$.
\end{proposition}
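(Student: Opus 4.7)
The plan is to reduce an arbitrary spanning surface $F$ to a checkerboard surface of some diagram of $L$ on $\Si$ via $S^*$-moves, and then to exploit the uniqueness (up to swap) of the checkerboard coloring to identify this surface with $F_\xi$ or $F_{\xi^*}$. The strategy parallels the argument of Gordon-Litherland \cite[Theorem 11]{GL-1978} for classical links, with the key new feature that, by Lemma \ref{lemma-S*}, there are now two $S^*$-classes of spanning surfaces in a thickened surface rather than one.

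First I would put $F$ in disk-band (virtual spanning surface) form, using the construction of Section \ref{sec:vss}: a Morse-theoretic simplification of $F$ with respect to the height function on $\Si \times I$ realizes $F$, up to ambient isotopy (move (a)) together with possibly a few auxiliary tubing moves (move (b)), as a union of pairwise disjoint disks lying in $\Si \times \{1/2\}$ joined by (possibly half-twisted) bands. Projecting this disk-band form onto $\Si$ yields a link diagram $D_F$ of $L$ equipped with a checkerboard coloring $\xi_F$ in which the disks project to the black regions, so that $F$ is literally the checkerboard surface $F_{\xi_F}$ of $(D_F,\xi_F)$.

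Next, because $D_F$ and $D$ both represent $L \subset \Si \times I$, they are related by a finite sequence of Reidemeister moves on $\Si$ (after destabilizing if necessary so that both are cellularly embedded). A move-by-move analysis shows that under each Reidemeister move, with the checkerboard coloring extended consistently across the local picture, the associated checkerboard surface changes only by $S^*$-moves: R1 corresponds to attaching or removing a small half-twisted band (move (c)), while R2 and R3 reduce to ambient isotopy combined with attaching or removing a small tube (move (b)). These are exactly the local moves of \cite[Theorem 11]{GL-1978}, and their verification is unchanged in the thickened-surface setting because each Reidemeister move is supported in a disk on $\Si$. Applying this to the sequence carrying $D_F$ to $D$ produces an $S^*$-equivalence between $F$ and $F_{\xi'}$ for some checkerboard coloring $\xi'$ of $D$. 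Since $L$ is non-split and $D$ is cellularly embedded, the only checkerboard colorings of $D$ are $\xi$ and $\xi^*$, and so $F$ is $S^*$-equivalent to $F_\xi$ or $F_{\xi^*}$.

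The principal obstacle is the Reidemeister-invariance step: one must coherently track the coloring through each move and verify that the local tubes and half-twisted bands can be inserted inside $\Si \times I$ without interference from the rest of $F$. Provided each Reidemeister move is performed in a small disk neighborhood on $\Si$ whose thickening meets $F$ only in the local picture, the classical arguments of \cite{GL-1978} transfer without change, but the bookkeeping across a long sequence of moves requires care. As a consistency check, Lemma \ref{lemma-S*} combined with Equation \eqref{span-and-dual} determines a priori which of $F_\xi$ or $F_{\xi^*}$ must arise, namely the one sharing the relative homology class $[F] \in H_2(\Si \times I, L;\ZZ_2)$.
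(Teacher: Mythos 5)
Your argument follows essentially the same route as the paper's proof: first normalize $F$ to a checkerboard surface of some diagram via isotopy and tubing (moves (a),(b)), which the paper outsources to its own proof of Proposition \ref{prop:equiv}, and then transport along a Reidemeister sequence, checking that each move alters the checkerboard surface only by $S^*$-moves, which the paper outsources to \cite{Yasuhara-2014} together with \cite[Theorem~3.3]{Im-Lee-Lee-2010}. Your substitution of direct coloring-propagation through the sequence, followed by the observation that a cellularly embedded connected diagram on $\Si$ admits exactly two checkerboard colorings, is a legitimate replacement for the Im--Lee--Lee citation; however, the parenthetical ``after destabilizing if necessary so that both are cellularly embedded'' should be deleted, since destabilization changes the ambient $\Si \times I$ in which the $S^*$-equivalence must take place, and it is in any case not needed --- the Reidemeister theorem for diagrams on $\Si$ and the propagation of colorings do not require cellular embedding of the intermediate diagrams, only that the target diagram $D$ be cellularly embedded so that its sole checkerboard colorings are $\xi$ and $\xi^*$.
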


\begin{proof}
If  $D$ and $D'$ are two checkerboard colorable diagrams for $L$ with colorings $\xi$ and $\xi'$, respectively, then  $(D', \xi')$ is equivalent through checkerboard colored diagrams to either $(D, \xi)$ or its dual $(D,\xi^*)$. (For a proof, see \cite[Theorem 3.3]{Im-Lee-Lee-2010}.) An argument similar to the one in \cite{Yasuhara-2014} shows that the checkerboard surfaces of equivalent diagrams are $S^*$-equivalent, thus it follows that $F_{\xi'}$ is $S^*$-equivalent to either $F_{\xi}$ or $F_{\xi^*}$. 

Thus, the proposition follows once we show that any spanning surface $F$ is $S^*$-equivalent to a checkerboard surface. In the proof of Proposition \ref{prop:equiv}, we showed that, given any spanning surface $F$ for $L$, after performing isotopy and attaching 1-handles, the new surface is a checkerboard surface for a diagram of $L$. The new surface is clearly $S^*$-equivalent to $F$, in fact, in the equivalence we only need moves of type (a) and (b). 
\end{proof}

Notice that by combining Proposition \ref{prop-S*} and Lemma \ref{lemma-S*}, it follows that if $L \subset \Si \times I$ is a non-split checkerboard colorable link in a thickened surface of genus $g(\Si)>0$, then two spanning surfaces $F_1$ and $F_2$ for $L$ are $S^*$-equivalent if and only if $[F_1] = [F_2]$ as elements of $H_2(\Si \times I, L;\ZZ_2)$.
 

\subsection{Crosscap numbers} \label{sec:cc}
We will now define crosscap numbers for knots in thickened surfaces and for virtual knots. We begin by recalling several definitions.

Every closed connected nonorientable surface $F$ is homeomorphic to a connected sum of copies of the real projective plane $\RP^2$. The \textit{Euler genus} of a connected nonorientable surface $F$ is the positive integer $k$ such that $F=\#_{i=1}^k \RP^2$.  For such a surface $F$, we write $\ga(F)=k$ for its Euler genus. In case $F$ has nonempty boundary, we define $\ga(F)$ to be the Euler genus of the closed surface obtained by capping each component of $\partial F$ with a disk.

Every knot $K$ in $S^3$ bounds a nonorientable surface, and the crosscap number of $K$ is the minimum Euler genus over all nonorientable spanning surfaces for $K$. By convention, the crosscap number of the unknot is set to be zero.

\begin{definition} 
Let $K\subset \Si \times I$ be a knot with checkerboard coloring $\xi$, and let $F_\xi$ be its black checkerboard surface. The \textit{$\xi$-crosscap number} of $K$, denoted $C_\xi(K)$, is defined to be the minimum Euler genus over all nonorientable spanning surfaces $F$ for $K$ which are $S^*$-equivalent to $F_\xi$. 
 \end{definition}

The checkerboard surface $F_{\xi^*}$ for the dual coloring $\xi^*$ is the white checkerboard surface for $\xi.$ Since $F_\xi$ and $F_{\xi^*}$ are not $S^*$-equivalent (unless $g(\Si)=0$), the two crosscap numbers $C_{\xi}(K)$ and $C_{\xi^*}(K)$ need not be equal. Thus, knots in thickened surfaces of genus $g>0$ typically have two crosscap numbers. Examples of this phenomenon will appear in a forthcoming paper by the second author.

The next result is a generalization to nonorientable spanning surfaces of \cite[Theorem 6.4]{Boden-Gaudreau-Harper-2016}. It shows that the $\xi$-crosscap number of a knot is monotone non-increasing  under destabilization.  
We say that an annulus  $A$ in $\Si \times I$ is \textit{vertical} if $A$ is isotopic rel boundary to $\ga \times I \subset \Si \times I$ for some embedded circle $\ga \subset \Si.$

\begin{theorem}   \label{thm-min}
Let $K \subset \Si \times I$ be a checkerboard colorable knot and $F$ a nonorientable connected spanning surface for $K$. Fix the coloring $\xi$ of $K$ so that $F$ is $S^*$-equivalent to $F_\xi$, 
the black checkerboard surface. 

Suppose $A$ is a vertical annulus in $\Si \times I$  disjoint from $K.$ Let $\Si'$ be the surface obtained from $\Si$ by destabilization along $A$, and let $K'$ be the resulting knot in $\Si' \times I$. (If the destabilization along $A$ separates $\Si$, we choose $\Si'$ to be the component containing $K'$.) Then $\xi$ extends to a coloring $\xi'$ of $K'$ in $\Si' \times I$, and there exists a nonorientable connected spanning surface $F'$ for $K'$ in $\Si' \times I$, which is $S^*$-equivalent to the black checkerboard surface $F_{\xi'}$ with $\ga(F') \leq \ga(F)$.
\end{theorem}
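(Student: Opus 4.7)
The strategy is to modify $F$ within its $S^*$-equivalence class, without increasing its Euler genus, so that $F \cap A = \emptyset$; then $F$ embeds naturally into $\Si' \times I$ as the desired surface $F'$. First I observe that $\xi$ extends to $\xi'$: since $A$ is disjoint from $K$, the core curve $\ga$ of $A$ lies in a single region of $\Si \setminus p(K)$, so the two new disks added during destabilization inherit the color of that region consistently, independent of whether $\ga$ separates the region.

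Next, isotope $F$ to be transverse to $A$. Because $\partial F = K$ is disjoint from $A$, the intersection $F \cap A$ is a finite disjoint collection of simple closed curves in the interiors of both surfaces. I induct on $|F \cap A|$. If some curve $c \subset F \cap A$ is inessential in $A$, choose one innermost in $A$, bounding a disk $D \subset A$ with $\Int(D) \cap F = \emptyset$, and surger $F$ along $D$: if $c$ is two-sided in $F$, replace an annular neighborhood of $c$ in $F$ by two parallel copies of $D$; if $c$ is one-sided, replace a M\"obius band neighborhood by the single disk $D$. Retain the component containing $K$ (discarding any closed components), and, if the result is orientable, attach a half-twisted band to restore nonorientability. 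A short Euler characteristic computation shows that the new surface is connected, nonorientable, and has Euler genus at most $\ga(F)$; moreover, each move preserves the class in $H_2(\Si \times I, K; \ZZ_2)$, so by Lemma~\ref{lemma-S*} the new surface remains $S^*$-equivalent to $F_\xi$.

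If instead every curve of $F \cap A$ is essential in $A$, they are all parallel to $\ga \times \{1/2\}$. I may assume $\ga$ is essential in $\Si$ (otherwise the destabilization produces a separate $S^2$ summand not containing $K'$, and the conclusion is immediate with $F' = F$). Then each such curve is essential in $\Si \times I$, and hence also essential in $F$. Pick the curve $c_0 \subset F \cap A$ closest to $\ga \times \{0\}$; it cobounds an annulus $A_0 \subset A$ with $\ga \times \{0\}$ whose interior is disjoint from $F$. Following the Seifert surface argument in \cite[Thm.~6.4]{Boden-Gaudreau-Harper-2016}, use $A_0$ together with the product structure of $\Si \times I$ near $\Si \times \{0\}$ to construct an ambient isotopy of $F$ that removes $c_0$ from $F \cap A$ without changing either the Euler genus or the $S^*$-class. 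Iterating the two procedures terminates with $F \cap A = \emptyset$.

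At this point $F$ lies in $\Si \times I \setminus A = (\Si \setminus N(\ga)) \times I$, which embeds into $\Si' \times I$; let $F'$ denote the resulting surface. It is connected, nonorientable, and satisfies $\ga(F') \leq \ga(F)$. To verify $F' \sim_{S^*} F_{\xi'}$ in $\Si' \times I$, I appeal to Lemma~\ref{lemma-S*}: the modifications above preserve the $\ZZ_2$-homology class, and the natural extension of $\xi$ to $\xi'$ identifies $[F_{\xi'}]$ with the image of $[F_\xi]$ under the inclusion $\Si \times I \setminus A \hookrightarrow \Si' \times I$, so $[F'] = [F_{\xi'}]$. The main obstacle is the essential-intersection case: without a compressing disk in $A$, removing $c_0$ requires a delicate ambient isotopy using the product structure, while simultaneously controlling connectedness, nonorientability, and the $S^*$-class.
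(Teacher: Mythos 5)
Your strategy is genuinely different from the paper's, and the difference matters. The paper does not try to make $F$ disjoint from $A$; instead it cuts $F$ along the circles $F \cap A$, carries the cut surface into $\Si' \times I$, and then caps each new boundary circle with a disk inside the thickened $2$-disks that are glued in during destabilization. In $\Si' \times I$ the two scar copies of $A$ extend across those capping disks, so every circle of $F \cap A$ -- including circles that were essential in $A$ -- bounds a disk there. This makes the capping step uniform and avoids any need to eliminate intersections beforehand. After capping, the paper discards closed components and, if necessary, attaches a half-twisted band to restore nonorientability, keeping the Euler genus under control.

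The gap in your argument is exactly where you acknowledge unease: the essential-intersection case. You say ``use $A_0$ together with the product structure of $\Si \times I$ near $\Si \times \{0\}$ to construct an ambient isotopy of $F$ that removes $c_0$ from $F \cap A$,'' but no such isotopy is exhibited, and none exists in general. The curve $c_0$ is essential in $A$, hence parallel to $\ga \times \{0\} \subset \partial(\Si \times I)$; there is no interior region to push $F$ into, and $\Si \times \{0\}$ is a genuine boundary component that $F$ cannot cross. Indeed, the mod-$2$ count of essential circles in $F \cap A$ equals the image of $[F]\in H_2(\Si\times I, K;\ZZ_2)$ under intersection with $[A]$ in $H_1(A,\partial A;\ZZ_2)\cong\ZZ_2$, which is an invariant of the ambient isotopy class of $F$ rel $K$; when this class is nonzero you cannot isotope $F$ to be disjoint from $A$ at all. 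The appeal to \cite[Thm.~6.4]{Boden-Gaudreau-Harper-2016} does not rescue this: that argument also proceeds by cutting and capping in the destabilized manifold, not by removing intersections in $\Si\times I$.

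Two smaller points. First, in the innermost-disk step you allow $c$ one-sided in $F$ and cap a M\"obius band with $D$; this would produce an embedded $\RP^2$ in $\Si\times I$ (namely $D\cup M$ with the corner smoothed), which is impossible for $\Si$ orientable (lift to the universal cover $\wt\Si\times I\subset\RR^3$ or $S^3$ and recall $\RP^2$ does not embed there). So this case is vacuous, but you should justify discarding it rather than appeal to a homology argument that would not obviously apply. Second, your deduction ``each move preserves the class in $H_2(\Si\times I,K;\ZZ_2)$, so by Lemma~\ref{lemma-S*}\ldots'' uses the converse of Lemma~\ref{lemma-S*}; the needed equivalence is the one obtained by combining Lemma~\ref{lemma-S*} with Proposition~\ref{prop-S*}.
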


\begin{proof} Let $A$ be a vertical annulus for $\Si \times I$, and consider the intersection $F \cap A$. If $F \cap A$ is empty, then we can take $F' = F.$  Otherwise, we can arrange that $A$ and $F$ intersect transversely, so that $A \cap F$  consists of a union of circles in the interiors of $A$ and $F$. Cutting $\Si \times I$ along $A$ then cuts $F$ along these circles. Denote the resulting surface $F'$, and let $\Si'$ be the destabilized surface, so $\Si' \times I$ is obtained from the closure of $\Si \times I \sm N(A)$ by capping the holes with thickened disks. 

In $\Si' \times I,$ there are two disjoint copies of $A$. If $A$ separates $\Si \times I,$ then it is customary to discard the component of $\Si \times I$ that does not contain $K'$. Each component $C$ of $A \cap F$ appears as a circle in both copies of $A$, and both circles bound disks in $\Si' \times I$. These disks can be chosen to be pairwise disjoint. Attaching them to the components of $\partial F'$ along the cut circles, we obtain a spanning surface $F''$ for $K'$. Notice that $F''$ is nonorientable, since it contains one-sided circles, and that $b_1(F'') \leq \ga(F).$ (Here, $b_1(F'')$ refers to the rank of $H_1(F'';\ZZ_2),$ i.e., the first Betti number of $F''$ over $\ZZ_2$.)

If the surface $F''$ is disconnected, one can construct a connected spanning surface by simply discarding the closed components of $F''.$ The resulting surface, denoted $F'''$, is the component of $F''$ with non-empty boundary. Clearly it is a connected spanning surface for $K'$ with $b_1(F''') \leq b_1(F)$. If $F'''$ is nonorientable, then $\ga(F''') \leq \ga(F)$ and we are done. Otherwise, if $F'''$ is orientable, then one of the discarded components of $F''$ must be nonorientable with positive Euler genus. Therefore $b_1(F''') \leq  \ga(F'')-1.$ Attaching a half-twisted band to $F'''$, we obtain a nonorientable surface with Euler genus $\ga=b_1(F''')+1 \leq  \ga(F'')$.
\end{proof}

Theorem \ref{thm-min} applies to show that crosscap numbers are well-defined for checkerboard colorable virtual knots and can always be computed using a minimal genus representative.

The next result is obtained by combining Proposition \ref{prop:equiv} and Theorem \ref{thm-min}.
\begin{proposition}
If $L$ is a checkerboard colorable virtual link, then any  link diagram $D  \subset \Si$ of minimal support genus that represents $L$ is checkerboard colorable.
\end{proposition}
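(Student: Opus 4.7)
The plan is to reduce the statement to Proposition \ref{prop:equiv} and propagate a spanning surface through a sequence of destabilizations, invoking Kuperberg's theorem at the end to identify the resulting minimal support genus representative with the given one.

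By Proposition \ref{prop:equiv}, the diagram $D \subset \Si$ is checkerboard colorable if and only if the link $L \subset \Si \times I$ it represents bounds an unoriented spanning surface, so it suffices to produce one. Since $L$ is a checkerboard colorable virtual link, some representative $L_0 \subset \Si_0 \times I$ bounds a spanning surface $F_0$, again by Proposition \ref{prop:equiv}. I would then modify the pair $(L_0, F_0)$ through destabilizations, preserving a spanning surface at each stage.

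The key step is the following destabilization lemma: if $L_i \subset \Si_i \times I$ bounds a spanning surface $F_i$ and $A \subset \Si_i \times I$ is a vertical annulus disjoint from $L_i$, then the destabilized link $L_{i+1} \subset \Si_{i+1} \times I$ also bounds a spanning surface. To prove it I would reuse the cut-and-cap construction from the proof of Theorem \ref{thm-min}: isotope $F_i$ to meet $A$ transversely so that $F_i \cap A$ is a disjoint union of interior circles, cut $F_i$ along these circles, and cap off the new boundary circles with disjoint disks sitting inside the thickened 2-disks used to fill in $\Si_{i+1} \times I$ after cutting along $A$. After discarding any closed components, the result is the desired spanning surface $F_{i+1}$. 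Although Theorem \ref{thm-min} is stated for knots, this construction uses only the disjointness of $A$ from the link, so it carries over to links without modification.

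Iterating, the genus of $\Si_i$ strictly drops with each destabilization, so after finitely many steps the process terminates at a representative $L_* \subset \Si_* \times I$ of minimal support genus that still bounds a spanning surface. By Kuperberg's theorem \cite{Kuperberg}, any two minimal support genus representatives of a virtual link are related by an orientation-preserving diffeomorphism of pairs, and transporting a spanning surface along this diffeomorphism shows that the given $L \subset \Si \times I$ also bounds one; Proposition \ref{prop:equiv} then yields the conclusion. The main obstacle is verifying the destabilization lemma, but since we need not preserve orientability, connectedness, or Euler genus here (unlike in Theorem \ref{thm-min}), the cut-and-cap argument is strictly simpler in our setting and no new difficulty arises.
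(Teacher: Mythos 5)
Your proposal is correct and follows essentially the same route as the paper: translate checkerboard colorability to existence of an unoriented spanning surface via Proposition \ref{prop:equiv}, push the spanning surface down through destabilizations using the cut-and-cap argument underlying Theorem \ref{thm-min}, and finish with Kuperberg's theorem. If anything you are slightly more careful than the paper itself, which cites Theorem \ref{thm-min} directly even though that theorem is stated only for knots with nonorientable connected spanning surfaces; your observation that one should reuse the cut-and-cap construction (which needs none of those restrictions and applies to arbitrary links and spanning surfaces) is exactly the right way to fill that small gap.
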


\begin{proof}
If $L$ is a checkerboard colorable virtual link, then it can be represented by a link in a thickened surface $\Si \times I$ which bounds a spanning surface. Theorem \ref{thm-min} then implies that there exists a representative of minimal support genus which bounds a spanning surface. By Proposition \ref{prop:equiv}, we see that $L$ can be represented by a minimal genus diagram $D \subset \Si$ with $[D]=0$ in $H_1(\Si;\ZZ_2).$ Kuperberg's theorem \cite{Kuperberg} then implies the same is true for any minimal genus diagram representing $L$. In particular, it follows that any minimal genus diagram for $L$ is checkerboard colorable.
\end{proof}

The next result summarizes our discussion. The proof is immediate and left to the reader. Note further that, this result implies that, for classical knots,  their crosscap numbers in the virtual category are equal to their crosscap numbers as classical knots.
\begin{corollary}
Given a checkerboard colorable virtual knot, its crosscap numbers are given by $C_\xi(K)$ and $ C_{\xi^*}(K)$, where $K \subset \Si \times I$ is a minimal genus representative, $\xi$ is a coloring for $K$, and $\xi^*$ is its dual coloring.
\end{corollary}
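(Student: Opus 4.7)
The plan is to assemble the pieces already in hand: crosscap numbers are a priori defined as an infimum over all thickened-surface representatives and all spanning surfaces in a given $S^*$-equivalence class, and we want to pin this down to a single minimal genus diagram.

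First, I would fix a virtual knot $L$ and invoke Kuperberg's theorem \cite{Kuperberg}: $L$ has a minimal support genus representative $K \subset \Si \times I$ that is unique up to diffeomorphism of the pair $(\Si \times I, \Si \times \{0\})$. By the immediately preceding proposition, this $K$ is checkerboard colorable, so a coloring $\xi$ exists, and the dual coloring $\xi^*$ gives the second checkerboard surface. By Proposition \ref{prop-S*}, every spanning surface of $K$ is $S^*$-equivalent to either $F_\xi$ or $F_{\xi^*}$, so the two numbers $C_\xi(K)$ and $C_{\xi^*}(K)$ exhaust all $S^*$-equivalence classes, and in particular every nonorientable spanning surface of $K$ is accounted for.

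Next I would argue that passing to a non-minimal representative cannot yield a smaller Euler genus in either class. Given any other checkerboard colorable representative $K'' \subset \Si'' \times I$ of $L$, a finite sequence of destabilizations (and orientation preserving diffeomorphisms) brings $K''$ to the minimal genus $K$ by Kuperberg's theorem. Applying Theorem \ref{thm-min} at each destabilization step shows that a nonorientable connected spanning surface $F''$ for $K''$ in the $S^*$-class of $F_{\xi''}$ produces a nonorientable connected spanning surface $F$ for $K$ in the corresponding class of $F_\xi$ with $\ga(F) \leq \ga(F'')$; the same argument applies to the dual coloring. Hence the infimum defining the crosscap number over all representatives is achieved on $K$ itself, and equals $\min\{C_\xi(K), C_{\xi^*}(K)\}$ class by class. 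Well-definedness across different choices of minimal genus representative follows from the uniqueness in Kuperberg's theorem, since the induced diffeomorphism identifies the two colorings (up to swapping $\xi$ and $\xi^*$) and preserves Euler genus.

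The only subtle point, and the one worth writing out carefully, is tracking which $S^*$-equivalence class survives destabilization: Theorem \ref{thm-min} is stated so that if $F$ is $S^*$-equivalent to $F_\xi$ upstairs then $F'$ is $S^*$-equivalent to $F_{\xi'}$ downstairs, so the two crosscap numbers descend independently rather than getting mixed. Once this is noted, the corollary follows directly from Theorem \ref{thm-min}, Proposition \ref{prop-S*}, the preceding proposition on checkerboard colorability of minimal genus diagrams, and Kuperberg's theorem, with no further calculation required.
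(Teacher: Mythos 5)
Your proof is correct and follows exactly the route the paper has in mind: the paper declares the corollary ``immediate'' after the sentence stating that Theorem~\ref{thm-min} shows crosscap numbers are well-defined and computable from a minimal genus representative, and you have simply spelled out that argument. You correctly identify and handle the one genuine subtlety, namely that Theorem~\ref{thm-min} tracks the $S^*$-equivalence class through the destabilization so the two crosscap numbers descend independently rather than getting mixed, and that Kuperberg's uniqueness makes the final unordered pair well-defined even though a diffeomorphism between minimal genus representatives may swap $\xi$ and $\xi^*$. The only blemish is a wording slip: the phrase ``equals $\min\{C_\xi(K), C_{\xi^*}(K)\}$ class by class'' contradicts itself, since tracking classes separately means no minimum is taken; you should instead say that the $\xi$-class crosscap number of $L$ equals $C_\xi(K)$ and likewise for $\xi^*$. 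This is cosmetic; the underlying reasoning is sound.
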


\section{The Gordon-Litherland pairing} \label{sec-2}
In this section, we introduce the Gordon-Litherland pairing for $\ZZ_2$-homologically trivial links in thickened surfaces. We use the pairing to define signature, determinant, and nullity invariants for links in thickened surface,  and we show that the invariants depend only on the $S^*$-equivalence class of the spanning surface. 


\subsection{An asymmetric linking} \label{sec:rel-link}
 We begin by describing an asymmetric linking for disjoint knots in $\Si \times I$ called the relative linking. Given disjoint oriented knots $J,K \subset \Si \times I$, the (relative) linking number $\lk_\Si(J,K)$ is defined as the algebraic intersection number $J\cdot B$, where $B$ is a 2-chain in $\Si \times I$ with $\partial B = K +c$ for some 1-cycle $c$ in $\Si \times \{1\}$. An easy exercise shows this is independent of the choice of relative 2-chain $B$. 

The relative linking numbers are not symmetric; instead they satisfy   
\begin{equation} \label{eq:linking}
\lk_\Si(J,K) - \lk_\Si(K,J) = p_* [J]\cdot p_*[K],
\end{equation}
where $\cdot$ is the algebraic intersection number in $\Si$ (see \cite[\S10.2]{Cimasoni-Turaev}). 

We adopt the convention that linking in $\Si \times I$ is computed relative to the top $\Si \times \{1\}.$ However, one can also consider relative linking relative to the bottom  $\Si \times \{0\}.$ It is sometimes necessary to refer to both forms of relative linking, and in that case we use $\overline{\lk}_\Si(J,K)$ for linking relative to the top and $\underline{\lk}_\Si(J,K)$ for linking relative to the bottom. 

If $J$ and $K$ are disjoint oriented knots, then $\overline{\lk}_\Si(J,K)$ is computed by counting, with sign, the number of times that $J$ crosses above $K$ in $\Si \times I$, where ``above'' is with respect to the positive $I$-direction in $\Si \times I$. On the other hand, $\underline{\lk}_\Si(J,K)$ is defined as the algebraic intersection number $J\cdot B$, where $B$ is a 2-chain in $\Si \times I$ with $\partial B = K +c$ for some 1-cycle $c$ in $\Si \times \{0\}$. It is computed by counting, with sign, the number of times that $J$ crosses below $K$ in $\Si \times I$. 

Notice that the top and bottom relative linkings satisfy $\underline{\lk}_\Si(J,K) = \overline{\lk}_\Si(K,J).$ Alternatively, under the orientation reversing diffeomorphism $\Si \times I \to \Si \times I$ given by sending $(x,t) \mapsto (x,1-t)$, we see that $\overline{\lk}_\Si(J,K)$ transforms into $\underline{\lk}_\Si(J,K)$ and vice versa.


\subsection{A symmetric bilinear pairing}
We now describe the Gordon-Litherland pairing, extending the methods of \cite{GL-1978} to the present setting. Let $L$ be a link in a thickened surface $\Si\times I$ with unoriented spanning surface $F$. A closed tubular neighborhood $N$ of $F$ in $\Si \times I$ is a  $[-1,1]$-bundle over $F$, and we set $\wt{F}$ to be the associated $\{\pm 1\}$-bundle. So $\wt{F} \to F$ is the orientable double cover when $F$ is not orientable, and it is the trivial double cover otherwise.

Define a pairing $\cG_F \co H_1(F;\ZZ) \times H_1(F;\ZZ) \lto \ZZ$ by setting 
\begin{equation} \label{defn:GL}
\cG_F(\al,\be) = \lk_\Si(\tau \al, \be) - p_*(\al) \cdot p_*(\be) , 
\end{equation}
where $p_* \co H_1(\Si \times I;\ZZ) \to H_1(\Si;\ZZ)$ is the induced homomorphism of the projection $p\co\ \Si \times I \to \Si$, $\lk_\Si$ is the relative linking number, $\tau \co H_1(F;\ZZ) \to H_1(\wt{F};\ZZ)$ is the transfer map for the double cover $\wt{F} \to F,$ and $p_*(\al) \cdot p_*(\be) $ is the algebraic intersection of the homology classes $p_*(\al), p_*(\be) \in H_1(\Si;\ZZ)$.

\begin{lemma} \label{lem:sym}
$\cG_F$ is symmetric.
\end{lemma}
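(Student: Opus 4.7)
The plan is to reduce the symmetry of $\cG_F$ to a residual geometric identity on push-offs, which I can establish by a direct computation using transverse curves on $F$.

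First, I would expand $\cG_F(\al,\be)-\cG_F(\be,\al)$ using the definition together with the antisymmetry of the intersection form on $\Si$. This yields
\[
\cG_F(\al,\be)-\cG_F(\be,\al)=\bigl[\lk_\Si(\tau\al,\be)-\lk_\Si(\tau\be,\al)\bigr]-2\,p_*(\al)\cdot p_*(\be),
\]
so symmetry of $\cG_F$ becomes the numerical identity
\[
\lk_\Si(\tau\al,\be)-\lk_\Si(\tau\be,\al)=2\,p_*(\al)\cdot p_*(\be). \tag{$\star$}
\]

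Next, I would invoke the asymmetry relation~\eqref{eq:linking} for relative linking in $\Si\times I$. The transfer $\tau\co H_1(F;\ZZ)\to H_1(\wt F;\ZZ)$ satisfies $\pi_*\circ\tau=2\cdot\id$, where $\pi\co\wt F\to F$ is the double cover, so under the composite $\wt F\hookrightarrow \Si\times I\xrightarrow{p}\Si$ we get $p_*[\tau\al]=2p_*(\al)$ in $H_1(\Si;\ZZ)$. Applied to the pair $(\tau\al,\be)$, equation~\eqref{eq:linking} then gives
\[
\lk_\Si(\tau\al,\be)-\lk_\Si(\be,\tau\al)=2\,p_*(\al)\cdot p_*(\be).
\]
Substituting into $(\star)$, the symmetry claim reduces to the residual identity
\[
\lk_\Si(\be,\tau\al)=\lk_\Si(\tau\be,\al). \tag{$\dagger$}
\]

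Finally, I would prove $(\dagger)$ by a geometric push-off argument. Represent $\al,\be$ by transversely intersecting curves $c,d\subset F$, and work in a tubular neighborhood $N(F)$ with $\partial N(F)=\wt F$. Near an orientable patch, the class $\tau c$ is realized as $c^++c^-$ by the two vertical push-offs, and the top-convention interpretation of $\lk_\Si(J,K)$ as the signed count of crossings at which $J$ passes above $K$ in the projection to $\Si$ makes both sides of~$(\dagger)$ manifestly equal to the same signed count of $c\cap d$. In the nonorientable case, $\tau c$ is a connected double cover of $c$ inside $\wt F$; one then runs the same argument patch-by-patch on the orientation cover and verifies that the local identifications glue consistently, since the transfer class and the signed intersection count are independent of any local choice of push-off side.

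The main obstacle is the last step in the nonorientable setting: there is no global decomposition $\tau c=c^++c^-$, so one must either argue via the orientation cover $\wt F\to F$ or carefully patch the local push-off identifications, taking care with the exchange of normal directions along one-sided loops in $F$.
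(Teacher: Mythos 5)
Your algebraic reduction is correct and uses the same ingredient as the paper: expanding the difference and using skew-symmetry of the intersection form gives
\[
\cG_F(\al,\be)-\cG_F(\be,\al)=\bigl[\lk_\Si(\tau\al,\be)-\lk_\Si(\tau\be,\al)\bigr]-2\,p_*(\al)\cdot p_*(\be),
\]
and one application of Equation~\eqref{eq:linking} to the pair $(\tau\al,\be)$, with $p_*(\tau\al)=2p_*(\al)$, reduces symmetry of $\cG_F$ to your identity $(\dagger)$.

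But $(\dagger)$ is where the actual content of the lemma sits, and your treatment of it is a sketch with two gaps. First, it is not the case that both sides of $(\dagger)$ equal ``the same signed count of $c\cap d$'': the relative linking numbers also receive contributions from crossings of the projections $p(c),p(d)\subset\Si$ at points where $c$ and $d$ do not meet on $F$. At such a crossing where $d$ lies above $c$, $\lk_\Si(d,\tau c)$ picks up the crossing sign twice (from $d$ passing over both $c^\pm$), and $\lk_\Si(\tau d,c)$ likewise picks it up twice (from both $d^\pm$ passing over $c$). These contributions do match, but the bookkeeping has to be done. Second, and more seriously, the nonorientable case --- the only one that goes beyond the oriented Seifert setting --- is deferred to a ``patch-by-patch'' argument that is not carried out, and that is exactly where the naive splitting $\tau c=c^++c^-$ breaks down globally.

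The paper avoids this geometry entirely. After orienting $\wt F$ so the positive normal points out of $N$, it uses the push-off maps $i_\pm\co\wt F\to\Si\times I\sm\wt F$ and the fact that $\tau\be$ is homologous to $2\be$ in $N$ to rewrite $\lk_\Si(\tau\al,\be)=\tfrac12\lk_\Si\bigl((i_+)_*\tau\al,\tau\be\bigr)$; after one application of Equation~\eqref{eq:linking} the difference $\cG_F(\al,\be)-\cG_F(\be,\al)$ collapses to $\tfrac12\lk_\Si\bigl((i_+)_*\tau\al-(i_-)_*\tau\al,\tau\be\bigr)=\tfrac12\,\tau\al\cdot\tau\be$, the algebraic intersection of the lifts on $\wt F$. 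This vanishes because the nontrivial deck transformation of $\wt F\to F$ is orientation-reversing, so each point of $\al\cap\be$ lifts to a pair of points in $\tau\al\cap\tau\be$ of opposite sign --- a one-line argument that handles orientable and nonorientable $F$ uniformly. You can close your gap by the same device: $\lk_\Si(\be,\tau\al)=\tfrac12\lk_\Si\bigl(\tau\be,(i_+)_*\tau\al\bigr)$ while $\lk_\Si(\tau\be,\al)=\tfrac12\lk_\Si\bigl(\tau\be,(i_-)_*\tau\al\bigr)$, so the difference in $(\dagger)$ equals $\tfrac12\,\tau\be\cdot\tau\al=0$, with no case distinction on orientability.
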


\begin{proof}
The proof is similar to the one given in \cite[Proposition 9]{GL-1978}, and we include it for completeness. Orient $\wt{F}$ so that the positive normal vector points out of $N,$ and let $i_\pm \co \wt{F} \to \Si \times I \sm \wt{F}$ be the positive and negative push-offs, respectively. For $\al,\be \in H_1(F;\ZZ),$
\begin{equation*}
\begin{split}
\cG_F(\al,\be) &= \lk_\Si(\tau \al,\be) -p_*(\al) \cdot p_*(\be) \\
&= \lk_\Si \left((i_+)_* (\tau \al),\be\right) -p_*(\al) \cdot p_*(\be) \\
&= \tfrac{1}{2} \lk_\Si\left((i_+)_* (\tau \al),\tau \be\right) -p_*(\al) \cdot p_*(\be).
\end{split}
\end{equation*}
 
Therefore, applying the above formula twice and  applying also Equation \eqref{eq:linking} to the curves $(i_-)_* (\tau \al)$ and $\tau \be$ in the second line below, we find that
\begin{equation*}
\begin{split}
\cG_F(\al,\be) - \cG_F(\be,\al) 
&=  \tfrac{1}{2} \lk_\Si\left((i_+)_* (\tau \al),\tau \be\right) - p_*(\al) \cdot p_*(\be) \\
& \quad -\left(\tfrac{1}{2} \lk_\Si\left((i_+)_* (\tau \be),\tau \al\right)- p_*(\be) \cdot p_*(\al)\right) \\
&=  \tfrac{1}{2} \lk_\Si \left((i_+)_* (\tau \al),\tau \be\right) -\tfrac{1}{2} \lk_\Si\left(\tau \be,(i_-)_*(\tau \al)\right) - 2 \, p_*(\al) \cdot p_*(\be) \\
&= \tfrac{1}{2} \lk_\Si \left((i_+)_* (\tau \al),\tau \be\right) -\tfrac{1}{2} \lk_\Si\left((i_-)_* (\tau \al),\tau \be\right) \\
&= \tfrac{1}{2} \lk_\Si\left((i_+)_* (\tau \al)-(i_-)_*(\tau \al),\tau \be\right)\\
&= \tfrac{1}{2}\tau \al \cdot \tau \be \\
&=  0. 
\end{split}
\end{equation*}

Here we use the fact that $\tfrac{1}{2} p_*(\tau \al)\cdot p_*(\tau \be) = 2\, p_*(\al) \cdot p_*(\be) .$ Note that the term $\tau \al\cdot \tau \be$ in the last step refers to the  algebraic intersection of curves on $\wt{F},$ and since each point of $\al \cap \be$ gives rise to two points in $\tau \al \cap \tau \be$ of opposite signs, it follows that $\tau \al \cdot \tau \be =0$. This completes the proof of the lemma.
\end{proof}

Suppose that $L \subset \Si \times I$ is a $\ZZ$-homologically trivial link and $F \subset \Si \times I$ is a Seifert surface. Then the Seifert matrices $V^\pm$ are the $n \times n$ matrices with $i,j$ entry equal to $\lk_\Si(a_i^\pm,a_j)$, where $\{a_1,\ldots, a_n\}$ is a set of simple closed curves on $F$ giving an ordered basis for $H_1(F;\ZZ)$, and $a_i^+$ and $a_i^-$ denote the positive and negative push-offs of $a_i$ with respect to an oriented bi-collaring of $F$ in $\Si \times I$. (Linking numbers here refer to the relative linking introduced in Section \ref{sec:rel-link}.)

By \cite[Lemma 2.1]{Boden-Chrisman-Gaudreau-2017a}, the signature and nullity of the symmetrized Seifert matrices satisfy
$$\sig(V^{+}+\,(V^{+})^\tr) =\sig(V^{-}+\,(V^{-})^\tr) \,  \text{ and } \, 
\nullity(V^{+}+\,(V^{+})^\tr) = \nullity(V^{-}+\,(V^{-})^\tr).$$
We can  therefore define the signature and nullity by setting
\begin{equation} \label{eqn:sign}
\si(L,F)= \sig(V^{\pm}+(V^{\pm})^\tr) \, \text{ and } \,
n(L,F)=\nullity(V^{\pm}+(V^{\pm})^\tr).
\end{equation}

The next result shows that the Gordon-Litherland pairing specializes to the symmetrized Seifert pairing when the spanning surface is oriented.

\begin{theorem} \label{thm-ac-GL}
Suppose $L$ is the boundary of a Seifert surface $F$. 
Then $\cG_{F}$ is represented by the symmetric matrix $V^- + \, (V^-)^\tr$, where $V^-$ is the Seifert matrix associated to $F$ obtained by taking the negative push-offs.
\end{theorem}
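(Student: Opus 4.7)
The plan is to evaluate $\cG_F$ on a basis $a_1, \ldots, a_n$ of $H_1(F;\ZZ)$ and match the result entry-wise against $V^- + (V^-)^\tr$. The starting observation is that since $F$ is orientable, the $\{\pm 1\}$-bundle $\wt{F}\to F$ is the trivial double cover, and its two sheets are canonically identified with the positive and negative push-offs $F^+, F^- \subset \Si\times I$ determined by the oriented bi-collaring. Consequently the transfer map satisfies $\tau(a_i) = a_i^+ + a_i^-$, and substituting into the defining formula \eqref{defn:GL} gives
\begin{equation*}
\cG_F(a_i,a_j) = \lk_\Si(a_i^+, a_j) + \lk_\Si(a_i^-, a_j) - p_*(a_i)\cdot p_*(a_j).
\end{equation*}
The middle term is precisely $V^-_{ij}$, so the theorem reduces to the identity
\begin{equation*}
\lk_\Si(a_i^+, a_j) - p_*(a_i)\cdot p_*(a_j) = \lk_\Si(a_j^-, a_i) = V^-_{ji}.
\end{equation*}

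The heart of the proof is a geometric \emph{push-off swap} asserting $\lk_\Si(a_i^+, a_j) = \lk_\Si(a_i, a_j^-)$. The cycle $a_j^- - a_j$ is the boundary of the annulus $a_j\times[-\epsilon,0]$ lying in the bi-collar below $F$; since $a_i^+$ lies above $F$, this annulus is disjoint from $a_i^+$. Modifying the 2-chain that bounds $a_j$ (plus a cycle in $\Si\times\{1\}$) by this annulus therefore does not change its intersection with $a_i^+$, yielding $\lk_\Si(a_i^+, a_j) = \lk_\Si(a_i^+, a_j^-)$. A symmetric move with the annulus $a_i\times[0,\epsilon]$ above $F$, disjoint from $a_j^-$, gives $\lk_\Si(a_i^+, a_j^-) = \lk_\Si(a_i, a_j^-)$. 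Applying the asymmetry formula \eqref{eq:linking} to the disjoint pair $(a_i, a_j^-)$ then produces
\begin{equation*}
\lk_\Si(a_i, a_j^-) - \lk_\Si(a_j^-, a_i) = p_*(a_i)\cdot p_*(a_j^-) = p_*(a_i)\cdot p_*(a_j),
\end{equation*}
using that $a_j^-$ is isotopic to $a_j$ in $\Si\times I$ and hence projects to the same class in $\Si$. Chaining these equalities delivers exactly the displayed identity.

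The main obstacle is justifying the push-off swap with care: one must write down the correct bi-collar annuli, verify that each is disjoint from the other push-off, and confirm that modifying the bounding 2-chain by such an annulus does not alter the relative linking (which uses the \textit{asymmetric} convention $\lk_\Si = \overline{\lk}_\Si$ relative to $\Si\times\{1\}$). Once the swap is established, the remainder of the argument is purely formal algebra with the asymmetry formula, and the theorem follows immediately.
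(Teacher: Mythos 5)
Your proof is correct and follows the paper's route exactly: decompose $\tau a_i = a_i^+ + a_i^-$, move one push-off to the other argument via the bi-collar swap, and close with the asymmetry formula \eqref{eq:linking}. The only difference is that you carefully justify the push-off swap $\lk_\Si(a_i^+,a_j)=\lk_\Si(a_i,a_j^-)$ with the two annulus moves, a step the paper leaves implicit.
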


\begin{proof}
Since $F$ is orientable, $\tau \al = (i_-)_*(\al) + (i_+)_*(\al).$
Thus 
\begin{equation*}
\begin{split}
\cG_{F}(\al,\be) &= \lk_\Si(\tau \al, \be) - p_*(\al)\cdot p_*(\be) \\
&= \lk_\Si \left((i_-)_*(\al), \be\right) +\lk_\Si \left((i_+)_*(\al), \be\right)- p_*(\al)\cdot p_*(\be) \\
&= \lk_\Si \left((i_-)_*(\al), \be\right) + \lk_\Si \left(\al, (i_-)_* \be\right) - p_*(\al)\cdot p_*(\be) \\
&= \lk_\Si \left((i_-)_*(\al), \be\right) + \lk_\Si \left((i_-)_* \be, \al \right). 
\end{split}
\end{equation*}

In the above, Equation \eqref{eq:linking} is applied in the third line to the curves $\al$ and $(i_-)_* (\be)$.

If $\al_1,\ldots, \al_{2g}$ is a basis for $H_1(F;\ZZ)$, then the symmetrized Seifert matrix $V^- + (V^-)^\tr$ has $(i,j)$ entry given by 
$\lk_\Si(\al_i^-,\al_j) + \lk_\Si(\al_j^-,\al_i)$. Thus 
$$\quad \qquad \cG_{F}(\al_i,\al_j) =\lk_\Si \left((i_-)_*(\al_i), \al_j\right) +\lk_\Si\left((i_-)_* (\al_j), \al_i\right)
= V^-_{ij} + (V^-)^\tr_{ij}. \qquad \quad \qedhere$$
\end{proof}

Notice further that when $F$ is oriented, we have $e(F) = -\lk_\Si(L,L')=0$. Theorem \ref{thm-ac-GL} implies that $\sig(\cG_F) = \sig(V^- + (V^-)^\tr) =  \si(L,F).$ If $L$ is a link admitting a Seifert surfaces, then every spanning surface is $S^*$-equivalent to an oriented spanning surface $F$. Thus the $S^*$-invariant quantity identified in  Lemma \ref{lemma:S*-inv} is equal to the  signature  $\si(L,F)$ defined in Equation \eqref{eqn:sign} for some Seifert surface $F$.


\subsection{Link invariants} \label{subsec:sign-GL}
In this subsection, we describe the link signature, determinant, and nullity invariants derived from the Gordon-Litherland pairing. These invariants are shown to depend only on the $S^*$-equivalence
class of the spanning surface.

Suppose $L \subset \Si \times I$ is a link with $\mu$ components $L = K_1\cup \cdots \cup K_\mu$. Let $F \subset \Si \times I$ be a spanning surface and $L'$ be the push-off of $L$ which misses $F$. Then we can write $L' = K'_1\cup \cdots \cup K'_\mu$. Define  
\begin{equation} \label{eqn:Euler}
e(F) = - \sum_{i=1}^\mu \lk_\Si(K_i,K_i'),
\end{equation} 
where each pair $K_i, K_i'$ is oriented compatibly and $\lk_\Si(\,\cdot\,,\,\cdot\,)$ refers to the relative linking of Section \ref{sec:rel-link}.

\begin{lemma} \label{lemma:S*-inv}
If $F_1$ and $F_2$ are $S^*$-equivalent spanning surfaces in $\Si \times I$, then $$\sig(\cG_{F_1}) + \tfrac{1}{2} e(F_1) = \sig(\cG_{F_2}) + \tfrac{1}{2} e(F_2).$$
\end{lemma}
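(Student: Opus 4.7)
The plan is to verify that the quantity $\sig(\cG_F) + \tfrac{1}{2}e(F)$ is preserved under each of the three generating moves (a), (b), (c) of $S^*$-equivalence in Definition \ref{defn:S*-equiv}. Move (a) is immediate: both $\cG_F$ (built from relative linking numbers and intersection numbers in $\Si \times I$) and the self-linking quantity $e(F)$ are manifestly invariant under ambient isotopy.

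For move (b), attaching a small tube inside a ball $B \subset \Si \times I$ disjoint from $\partial F$: the boundary $L$ is unchanged, and the push-off $L'$ can be chosen unaffected by the surgery, so $e(F)$ is preserved. The handle surgery adds two generators to $H_1(F;\ZZ)$, a meridian $m$ and a longitude $\ell$, both representable by curves lying in $B$. Consequently $p_*(m) = 0 = p_*(\ell)$, so the intersection correction term in \eqref{defn:GL} contributes nothing involving these classes. A direct computation of the transfer map and the relative linking inside the ball, following the model of \cite[Lemma 5]{GL-1978}, gives $\cG_F(m,m) = 0$, $\cG_F(m,\ell) = 1$, and $\cG_F(\al, m) = \cG_F(\al, \ell) = 0$ for any class $\al$ supported outside $B$. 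Hence the new Gram matrix is congruent to the old one orthogonally summed with a block $\bigl(\begin{smallmatrix} 0 & 1 \\ 1 & * \end{smallmatrix}\bigr)$ of signature $0$, so $\sig(\cG_F)$ is preserved.

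For move (c), attaching a small half-twisted band inside a ball $B$ disjoint from the rest of $F$: this adds a single generator $\al$ to $H_1(F;\ZZ)$, represented by the core of the band closed up by a short arc on $F$. Again $p_*(\al) = 0$, so $\cG_F(\al,\al) = \lk_\Si(\tau \al, \al) = \pm 1$, with sign determined by the twist of the band, and $\cG_F(\al, \be) = 0$ for $\be$ supported away. Thus $\sig(\cG_F)$ changes by $\pm 1$. At the same time, the boundary $\partial F$ acquires a kink inside $B$, and an inspection of the normal bundle shows that the self-linking $\lk_\Si(K_i, K_i')$ of the affected component changes by $\mp 1$ on each side of the band, for a net change of $\mp 2$. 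By the minus sign in \eqref{eqn:Euler}, $e(F)$ changes by $\pm 2$, so $\tfrac{1}{2}e(F)$ changes by $\pm 1$, exactly cancelling the change in signature.

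The main obstacle is the sign-tracking in move (c): one must verify that the sign of $\cG_F(\al,\al)$ produced by the half-twisted band, as read from the transfer map $\tau \co H_1(F;\ZZ) \to H_1(\wt F;\ZZ)$ and the relative linking convention of Section \ref{sec:rel-link}, is consistent with the sign of the change in $\lk_\Si(K_i, K_i')$ induced on the push-off. This is the same sign-check carried out in \cite[Theorem 5]{GL-1978}, and adapts to the thickened-surface setting because in both moves (b) and (c) the new homology classes are supported in a ball of $\Si \times I$ and hence project trivially to $H_1(\Si;\ZZ)$, which makes the intersection correction $p_*(\al) \cdot p_*(\be)$ in \eqref{defn:GL} irrelevant to the computation.
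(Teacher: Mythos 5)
Your plan --- checking that $\sig(\cG_F)+\tfrac12 e(F)$ is unchanged under each of the moves (a), (b), (c) of Definition~\ref{defn:S*-equiv} --- is exactly how the paper handles this; its proof is a one-line deferral to \cite[Proposition~10]{GL-1978}, and the adaptation you correctly single out (the new homology classes created by a tube or a half-twisted band lie in a ball, so $p_*$ kills them and the correction term $p_*(\al)\cdot p_*(\be)$ in Equation~\eqref{defn:GL} is irrelevant to the computation) is the only substantive change needed in the thickened-surface setting.

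However, the sign-tracking in move (c) --- precisely the step you flagged as the main obstacle --- does not close as written. You claim: $\sig(\cG_F)$ changes by $\pm1$, while $\lk_\Si(K_i,K_i')$ changes by $\mp1$ on each side of the band, hence by $\mp2$ in total; then from $e(F)=-\sum\lk_\Si(K_i,K_i')$ you get that $e(F)$ changes by $\pm2$ and $\tfrac12 e(F)$ by $\pm1$. But $\pm1$ and $\pm1$ have the \emph{same} sign, so the net change in $\sig(\cG_F)+\tfrac12 e(F)$ under your bookkeeping is $\pm2$, not $0$. The geometric assertion is the culprit: the self-linking of $\partial F$ changes in the \emph{same} direction as the new diagonal entry $\cG_F(\al,\al)$, not the opposite one. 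Sanity check: the unknot bounding a disk has $\sig(\cG_F)=0$ and $e(F)=0$; after attaching a positively half-twisted band it bounds a M\"obius band with $\cG_F(\al,\al)=+1$, so $\sig$ rises by $1$, while $\lk_\Si(\partial F,\partial F')$ rises by $+2$, so $e$ drops by $2$ and $\tfrac12 e$ by $1$, giving $1-1=0$. In general $\Delta\lk_\Si(K_i,K_i')=+2\epsilon$ when $\Delta\sig(\cG_F)=\epsilon$, hence $\Delta e=-2\epsilon$, and cancellation holds. Your ``$\mp1$ on each side'' should read ``$\pm1$ on each side''.

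A smaller imprecision in move (b): it is not generally true that $\cG_F(\al,\ell)=0$ for $\al$ supported outside the ball. The new Gram matrix has block form $\bigl(\begin{smallmatrix} G_1 & \gamma & 0 \\ \gamma^{\tr} & 0 & 1 \\ 0 & 1 & 0 \end{smallmatrix}\bigr)$ with a possibly nonzero mixed column $\gamma$ (compare the paper's own block computation for the determinant and nullity); this column is cleared by a unimodular congruence using the off-diagonal $1$, so the conclusion that $\sig$ is unchanged stands, but one should not assert the cross-pairings vanish outright.
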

\begin{proof} The proof is identical to the proof of \cite[Proposition 10]{GL-1978}.
\end{proof}

Lemma \ref{lemma:S*-inv} implies that the quantity $\sig(\cG_{F}) + \frac{1}{2} e(F)$ depends only on the $S^*$-equivalence class of $F$, and Proposition \ref{prop-S*} implies that a checkerboard colorable knot admits at most two $S^*$-equivalence classes of spanning surfaces. Note that  since neither $\sig(\cG_{F})$ nor $e(F)$ depend on having chosen an orientation of $L$, the quantity $\sig(\cG_{F}) + \tfrac{1}{2} e(F)$ is an invariant of the unoriented link $L \subset \Si \times I.$ It is the analogue, for links in thickened surfaces, of the Murasugi invariant of classical links \cite{Murasugi-1970}.

Now suppose $L$ is given an orientation, and suppose further that $F$ is a spanning surface for $L$ and $L'$ is a push-off of $L$ missing $F$. Define  
$$e(F,L) = -\lk_\Si(L,L') = - \sum_{i,j=1}^\mu \lk_\Si(K_i,K_j').$$
Then one can easily verify that $e(F,L) = e(F) - \la(L)$, where $\la(L) = \sum_{i\neq j} \lk_\Si(K_i,K_j)$ denotes the total linking number of $L$. 

We can  therefore define the signature  by setting
\begin{equation} \label{defn:signature}
\si(L,F)= \sig(\cG_F) + \tfrac{1}{2} e(F,L)
\end{equation}
Lemma \ref{lemma:S*-inv} implies that $\si(L,F)$ is a well-defined link invariant that depends only on the $S^*$-equivalence class of $F$.

We note that, just in the case of classical links, the signature invariant $\si(L,F)$ is unchanged if the orientation on each component of $L$ is reversed. Writing 
$$\si(L,F) = \sig({\cG}_{F}) + \tfrac{1}{2}e(F,L) =\sig({\cG}_{F}) + \tfrac{1}{2}\left(e(F) - \la(L)\right),$$
it follows that $\si(L,F)$ depends on the orientation of $L$ only through the total linking number $\la(L).$ For instance, if $L=K_1 \cup K_2 \cup  \cdots \cup K_\mu$ and $L' = -K_1 \cup K_2 \cup \cdots \cup K_\mu$ is the result of reversing the orientation of the first component, then 
$$\si(L',F)-\si(L,F)= \tfrac{1}{2}\left(\la(L)-\la(L')\right) = \sum_{i=2}^\mu \lk_\Si(K_1,K_i)+\lk_\Si(K_i,K_1).$$

\begin{figure}[ht]
\hfill
\includegraphics[width=6cm]{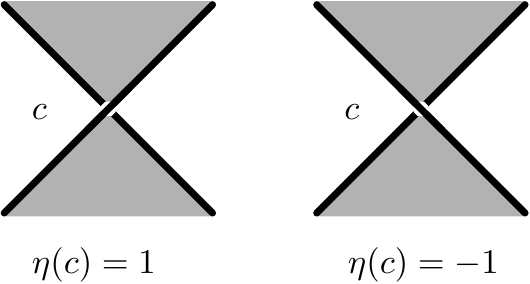}\hfill \qquad\quad 
\includegraphics[width=6cm]{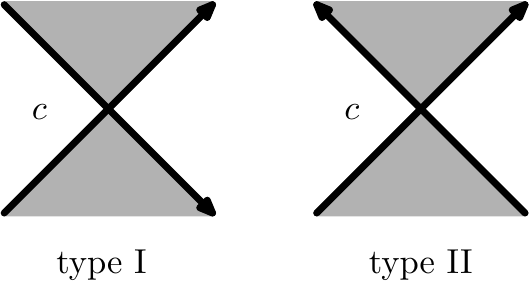} \hfill
\hfill
\caption{The incidence number and type of a crossing.}
\label{crossing-eta-type}
\end{figure}

Suppose $D$ is a link diagram for a link $L$ and $\xi$ is a checkerboard coloring with associated checkerboard surface $F_\xi$. The incidence number $\eta(c)$ of a crossing $c \in C_D$ is defined with respect to the coloring as in Figure \ref{crossing-eta-type}, and a crossing $c \in C_D$ is said to be type I or type II according to Figure \ref{crossing-eta-type}. 

The next result relates $e(F_\xi,L)$ with the quantity 
\begin{equation} \label{eqn:correction}
\mu_\xi(D) = \sum_{\text{$c$ type II}}  \eta(c),
\end{equation}
the sum of the incidence numbers over type II crossings of $D$.

\begin{lemma} \label{lemma:correction}
$e(F_\xi,L) = -2 \mu_\xi(D).$ 
\end{lemma}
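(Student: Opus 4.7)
The plan is to compute $\lk_\Si(L,L')$ locally near each crossing of $D$ and show that each type II crossing $c$ contributes $2\eta(c)$ to this linking, while each type I crossing contributes $0$. Since $e(F_\xi,L)=-\lk_\Si(L,L')$ by definition, this will yield $e(F_\xi,L)=-2\mu_\xi(D)$.

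First I would fix an explicit push-off. Recall that $F_\xi$ is built by placing a disk in $\Si\times\{1/2\}$ for each black region of $\xi$ and attaching a half-twisted band at every crossing of $D$. Choose a continuous normal vector field to $F_\xi$ along $L=\partial F_\xi$ (say, the one pointing into the adjacent white region), and let $L'$ be the push-off of $L$ along this field. Because the field extends continuously across each half-twisted band, $L'$ is a well-defined disjoint copy of $L$ missing $F_\xi$, which can be used to compute both $e(F_\xi,L)$ and $e(F_\xi,L)=-\sum_i \lk_\Si(K_i,K_i')$.

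Next I would localize the computation. Outside a small ball around each crossing, $F_\xi$ is flat in $\Si\times\{1/2\}$, so $L$ and its push-off $L'$ lie in parallel copies $\Si\times\{1/2\pm\ep\}$ with no crossings between them; in particular, any 2-chain certifying the relative linking can be chosen to have no intersections with $L$ outside neighborhoods of the crossings. Thus $\lk_\Si(L,L')$ decomposes as a sum of local contributions, one for each crossing of $D$, and each local contribution is computed inside a ball embedded in $\Si\times I$. Inside such a ball the relative linking in $\Si\times I$ coincides with the usual linking in $S^3$, so the ambient surface $\Si$ plays no role in the local count.

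Finally I would carry out the case analysis at each crossing, which reduces to the original Gordon--Litherland calculation \cite[Lemma~6]{GL-1978}. At a type I crossing, the half-twist in the band is oriented so that the two sub-arcs of $L'$ produced by the push-off cross the two sub-arcs of $L$ at a pair of sub-crossings with opposite signs, contributing $0$ to the local linking. At a type II crossing, the twist is oriented in the opposite sense and the two resulting sub-crossings have the same sign, equal to $\eta(c)$, contributing $2\eta(c)$. Summing over crossings gives $\lk_\Si(L,L')=2\mu_\xi(D)$, hence $e(F_\xi,L)=-2\mu_\xi(D)$. The only real obstacle is the bookkeeping in the crossing analysis; the conceptually delicate step is merely justifying the localization, i.e., confirming that the relative linking in $\Si\times I$ reduces to a sum of local contributions in embedded balls, so that the genus of $\Si$ and the global homology of $\Si\times I$ enter nowhere.
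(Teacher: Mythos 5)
Your proof is correct and follows essentially the same route as the paper: choose the push-off $L'$ of $L$ into the white region, use the fact that relative linking is a signed count of over-crossings to localize the computation of $\lk_\Si(L,L')$ to small neighborhoods of the crossings of $D$, and then verify by a crossing-by-crossing case analysis (the same one that appears in the classical Gordon--Litherland setting) that a type I crossing contributes $0$ and a type II crossing contributes $2\eta(c)$. The paper compresses the localization and case analysis into the phrase ``a routine exercise shows,'' while you spell out the push-off, the reason the count localizes, and the per-crossing contributions; your observation that the local count is insensitive to the ambient surface $\Si$ is exactly what makes the classical computation carry over.
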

\begin{proof}
Let $D'$ be the push-off of $D$ that misses $F_\xi$. Then $D'$ can  be taken  to lie on $\Si \times \{1/2\}$ except in a small neighborhood of each of the crossings. Thus, the quantity $e(F_\xi,L) = -\lk_\Si(D,D')$ can be calculated as a sum of contributions, one for each crossing  of $D$. A routine exercise shows that a crossing $c \in C_D$ contributes to $\lk_\Si(D,D')$ according to its type; if it is type I then it contributes $0$, and if it is type II then it contributes $2 \eta(c)$. Comparing with Equation \eqref{eqn:correction}, it follows that
$$\qquad \qquad \qquad e(F_\xi,L) = -\lk_\Si(D,D') = -2 \sum_{\text{$c$ type II}} \eta(c) = -2 \mu_\xi(D). \qquad \qquad \qquad \qedhere$$
\end{proof}

One can also use the Gordon-Litherland pairing to define link invariants of $L$ in terms of its determinant and nullity as follows. Suppose $F$ is a connected spanning surface for a link $L \subset \Si \times I$. Given a basis for  $H_1(F;\ZZ),$ we can write out the matrix representative for the Gordon-Litherland pairing $\cG_F$. Under a change of basis this matrix will change by unimodular congruence. Therefore, the determinant and nullity of $\cG_F$ give well-defined invariants, denoted by
\begin{equation} \label{defn:det}
\det(L,F)=|\det(\cG_F)| \, \text{ and } \, 
n(L,F)=\nullity(\cG_F).
\end{equation}
Just as with the link signature in Equation \eqref{eqn:correction}, we will see that these two quantities depend only on the $S^*$-equivalence class of the connected surface $F$. (In case of a disconnected spanning surface, one can define the nullity by taking $n(L,F)=\nullity(\cG_F)+b_0(F)-1,$ where $b_0(F)=\dim H_0(F;\ZZ)$.)

\begin{theorem} Let $F_1,F_2$ be two connected spanning surfaces for a link $L \subset \Si \times I$. 
If $F_1$ and $F_2$ are $S^*$-equivalent,  then we have
\[
\left|\det(\cG_{F_1})\right|=\left|\det(\cG_{F_2})\right|
\, \text{ and } \, 
\nullity(\cG_{F_1})=\nullity(\cG_{F_2}).
\]
\end{theorem}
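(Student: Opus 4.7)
The plan is to verify, in parallel with the proof of Lemma \ref{lemma:S*-inv}, that both $|\det(\cG_F)|$ and $\nullity(\cG_F)$ are preserved under each of the three generating moves (a), (b), (c) of Definition \ref{defn:S*-equiv}. Both quantities depend only on the $\ZZ$-congruence class of the matrix of $\cG_F$ (unimodular congruence multiplies $\det$ by $\pm 1$ and preserves rank), so it suffices to exhibit for each move a basis in which the matrix changes in a controlled unimodular way.

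Move (a) is immediate: ambient isotopy carries $F$ to $F'$ through a family of spanning surfaces and induces an isomorphism $H_1(F;\ZZ)\to H_1(F';\ZZ)$ under which $\cG_F$ corresponds to $\cG_{F'}$.

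For move (b), I would fix a basis $\{e_1,\ldots,e_n\}$ of $H_1(F;\ZZ)$ and extend it to a basis $\{e_1,\ldots,e_n,\al,\be\}$ of $H_1(F';\ZZ)$, where $\al$ is the meridian and $\be$ a longitude of the attached tube. Since $\al$ bounds the cocore disk of the $1$-handle (which is disjoint from $F$ and projects to a null-homologous small loop in $\Si$), a direct computation yields $\cG_{F'}(\al,e_i)=\cG_{F'}(\al,\al)=0$ and $\cG_{F'}(\al,\be)=\pm 1$. The matrix of $\cG_{F'}$ thus takes the form
\[
\begin{pmatrix} M & 0 & w \\ 0 & 0 & \pm 1 \\ w^\tr & \pm 1 & s \end{pmatrix},
\]
with $M$ representing $\cG_F$. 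The unimodular change of basis $e_i \mapsto e_i - c_i\al$ with $c_i = \pm w_i$ clears the column $w$ while preserving $M$ (since $\cG_{F'}(\al,e_j)=0$), yielding a block-diagonal matrix whose second summand has determinant $-1$. Hence $|\det(\cG_{F'})|=|\det(M)|$ and $\nullity(\cG_{F'})=\nullity(M)$. (If the tube joins two different components of $F$, $b_1$ rises by only $1$ and a mild variant of the argument applies.)

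For move (c), the attached half-twisted band contributes a single new generator $\al$, the core circle of the band, localized in a small $3$-ball in $\Si \times I$. The old basis elements can be represented by cycles outside this ball, giving $\lk_\Si(\tau\al,e_i)=0$ and $p_*(\al)\cdot p_*(e_i)=0$, whence $\cG_{F'}(\al,e_i)=0$. The standard local calculation for a half-twisted band (Möbius band) gives $\lk_\Si(\tau\al,\al)=\pm 1$ and $p_*(\al)\cdot p_*(\al)=0$, so $\cG_{F'}(\al,\al)=\pm 1$, and the matrix of $\cG_{F'}$ becomes the orthogonal sum $M\oplus(\pm 1)$, preserving both $|\det|$ and $\nullity$. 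The main obstacle is the local pairing computation in move (b), in particular verifying that the meridian-longitude pair contributes a hyperbolic block with the claimed off-diagonal $\pm 1$; this rests on the algebraic intersection $\pm 1$ of the longitude with the cocore disk bounded by $\tau\al$. Once these three local modifications are established, the theorem follows at once from the block-reduction computations above.
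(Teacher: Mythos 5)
Your proof is correct and takes essentially the same route as the paper: verify that each of the three generating moves of $S^*$-equivalence alters a matrix representative of $\cG_F$ only by unimodular congruence plus an orthogonal direct summand of determinant $\pm 1$. The paper's argument for move~(b) writes the enlarged matrix with the longitude column $\ga$ first and the meridian column $\mathbf{0}$ second and clears $\ga$ by congruence; you order the new basis as (meridian, longitude) and keep the longitude's self-pairing $s$ general rather than asserting it is $0$, which is marginally more careful but arrives at the same block $\begin{pmatrix}0 & \pm 1\\ \pm 1 & s\end{pmatrix}$ of determinant $-1$. One small quibble: the parenthetical about the tube joining two different components of $F$ is moot here, since the move in Definition~\ref{defn:S*-equiv} is attachment of a \emph{small} tube (both feet in a small disk of $F$), so it never changes $b_0$; in particular both $F_1$ and $F_2$ are assumed connected and the intermediate surfaces stay connected.
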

\begin{proof} 
For $i=1,2$, choose a basis for $H_1(F_i;\ZZ)$ and let $G_{i}$ be the matrix representative for $\cG_{F_i}$.
If $F_1$ and $F_2$ are ambient isotopic, then the result holds since $G_1$ and $G_2$ are unimodular congruent in that case. Suppose then that $F_2$ is obtained from $F_1$ by adding a thin tube. Then we can find bases so that $G_{2}$ is equal to
\[
\begin{bmatrix} G_{1} & \ga & \textbf{0} \\ \ga^{\intercal} & 0 & 1 \\ \textbf{0} & 1 & 0\end{bmatrix} \equiv \begin{bmatrix} G_{1} & \textbf{0} & \textbf{0} \\ \textbf{0} & 0 & 1 \\ \textbf{0} & 1 & 0\end{bmatrix},
\]
where $\equiv$ denotes unimodular congruence. Again we conclude that $\left|\det(G_{1})\right|=\left|\det(G_{2})\right|$. If $F_2$ is obtained from $F_1$ by adding a half-twisted band, as in Figure \ref{half-twisted-band}, then the rank of $H_1(F_2;\ZZ)$ is one greater than that of $H_1(F_1;\ZZ)$.  Let $\de$ be the generator given by the core of the half-twisted band that is added. Then $\lk_{\Si}(\tau\de,\de)=\pm 1$, depending on the direction of the half twist. We can find bases so that  $G_{2}$ is equal to
\[
\begin{bmatrix} G_{1} & \textbf{0} \\ \textbf{0} & \pm 1 \end{bmatrix}.
\]  
Again, we have that $\left|\det(G_{1})\right|=\left|\det(G_{2})\right|$. Since $S^*$-equivalence is generated by these three operations, the theorem is proved.
\end{proof}

\begin{example} \label{ex:3-7}
\begin{figure}[ht]
\centering
\includegraphics[scale=0.90]{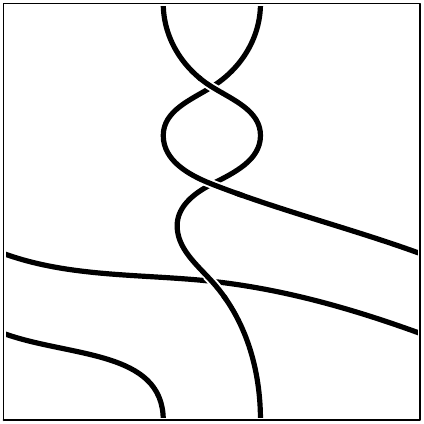}
\qquad
 \includegraphics[scale=0.90]{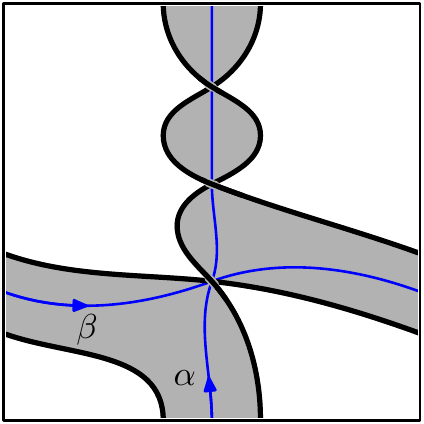}
\qquad
 \includegraphics[scale=0.90]{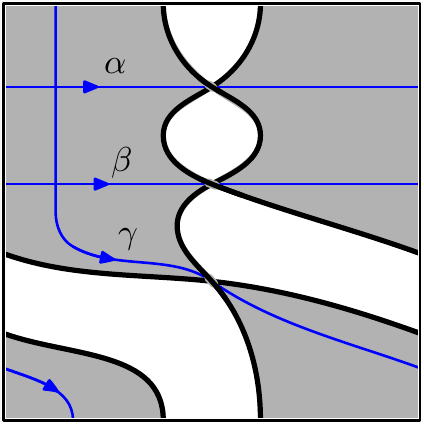}
\caption{An alternating knot with dual checkerboard colorings and generators for homology.} \label{3-7-a}
\end{figure}

Consider the knot in the torus in Figure \ref{3-7-a}  with its two colorings. Let $F$ be the black surface in the first coloring (middle) and $F^*$ the black surface in the dual coloring (right).

Using the basis $\al,\be$ for $H_1(F;\ZZ)$ in Figure \ref{3-7-a} (middle), we compute that  $\cG_{F}$ has matrix ${\footnotesize \begin{pmatrix} -3 & -1 \\ -1 & -1 \end{pmatrix}}$. If $K'$ is a parallel to $K$ missing $F$, then Equation \eqref{eqn:Euler} gives that $e(F) = -\lk_\Si(K,K') =4$. Therefore, $\sig(\cG_F)=-2$, and it follows that 
$$\si(K,F)= \sig(\cG_F)+e(F)/2= -2 + 2 =0 \, \text{ and } \, \det(K,F)=2.$$

Using the basis $\al,\be,\ga$ for $H_1(F^*;\ZZ)$ in Figure \ref{3-7-a} (right), we compute that $\cG_{F^*}$ has matrix ${\footnotesize \begin{pmatrix} 1 & 0 & 0 \\ 0&1&0 \\ 0&0&1 \end{pmatrix}}.$ We take a moment to explain this step.

Firstly, since each of $\al,\be,\ga$ pass through only one crossing with $\eta=1,$ it follows that $\cG_{F^*}(\al,\al)=1=\cG_{F^*}(\be,\be) = \cG_{F^*}(\ga,\ga)$. Furthermore, since $\al$ and $\be$ are disjoint curves, we have $\cG_{F^*}(\al,\be) = 0$. In fact, both $\cG_{F^*}(\al,\ga)$ and $\cG_{F^*}(\be,\ga)$ vanish as well, even though the curves are not disjoint. For example, 
$$\cG_{F^*}(\al,\ga) = \lk_\Si(\tau \al, \ga) - p_*(\al)\cdot p_*(\ga) = -1 -(-1) =0,$$ 
with a similar argument for $\cG_{F^*}(\be,\ga)$. 

Clearly $\sig(\cG_{F^*})=3$. If $K''$ is a parallel that misses $F^*$, then Equation \eqref{eqn:Euler} implies that $e(F^*) =-\lk_\Si(K,K'')=-2.$ Thus,  
$$\si(K,F^*)= \sig(\cG_{F^*})+e(F^*)/2= 3 -1 =2 \, \text{ and } \, \det(K,F^*)=1.$$
 \hfill $\Diamond$
\end{example}


\section{Detecting the virtual genus} \label{sec-3}
In this section, we apply the link determinants to detect the virtual genus of non-split virtual links. This is achieved by establishing a criterion for any checkerboard colorable link $L \subset \Si \times I$ in a thickened surface of minimal genus.

Suppose $L \subset \Si \times I$ is a non-split link in a thickened surface whose associated link diagram $D$ is cellularly embedded and checkerboard colored. Let $F_\xi$ be the checkerboard surface and $F_{\xi^*}$ the dual surface.  The next result shows that if $\Si$ is not a minimal genus surface for $L$, then either $\det(L,F_\xi)=0$ or $\det(L,F_{\xi^*})=0$.

\begin{theorem} \label{thm:vg}
Let $L \subset \Si \times I$ be a non-split checkerboard colorable link. If $L$ is not a minimal genus representative, then one of the determinants of $L$ is zero.
\end{theorem}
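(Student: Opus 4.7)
My plan is to exploit the non-minimality of $\Si$ to exhibit a nontrivial element in the kernel of one of $\cG_{F_\xi}$ or $\cG_{F_{\xi^*}}$. Since $L$ is not of minimal support genus, after isotoping $L$ if necessary, we can arrange that $\Si \times I$ contains a vertical annulus $A = \al \times I$ disjoint from $L$, where $\al \subset \Si$ is an essential simple closed curve along which destabilization strictly reduces the genus. In the separating case, the component of $\Si \sm \al$ not containing $p(L)$ must have positive genus for the destabilization to actually reduce the support genus, and we may replace $\al$ with a non-separating simple closed curve in that component (still disjoint from $L$); thus we may assume $\al$ is non-separating in $\Si$, so $[\al] \neq 0$ in $H_1(\Si; \ZZ)$.

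Because $A \cap L = \emptyset$, the diagram $D = p(L)$ on $\Si$ is disjoint from $\al$, so $\al$ lies entirely in a single region $R$ of $\Si \sm D$. Swapping $\xi$ and $\xi^*$ if necessary, assume $R$ is a black region. Then $\al$ lifts to a simple closed curve on the black checkerboard surface $F_\xi$, sitting in $\Si \times \{\tfrac{1}{2}\}$ inside the portion of $F_\xi$ coming from $R$. The inclusion $F_\xi \hookrightarrow \Si \times I$ followed by the projection to $\Si$ sends $[\al]$ to $[\al] \neq 0$ in $H_1(\Si;\ZZ)$, so $[\al]$ is non-trivial in $H_1(F_\xi; \ZZ)$.

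The core computation is to show $\al \in \ker(\cG_{F_\xi})$. Since $\al$ lies in the interior of $R$ away from the half-twisted bands of $F_\xi$, the surface is locally orientable along $\al$, and the transfer splits as $\tau \al = \al^+ + \al^-$ where $\al^\pm = \al \times \{\tfrac{1}{2} \pm \ep\}$. For any $\be \in H_1(F_\xi;\ZZ)$, represent $\be$ by a cycle lying at height $\tfrac{1}{2}$ on the disks and passing transversely through the bands, and take the 2-chain $B = p_*(\be) \times [\tfrac{1}{2}, 1]$ in the definition of relative linking (with $\ep$ small enough that $\al^-$ lies below $B$). A direct intersection count then yields $\lk_\Si(\al^+, \be) = p_*(\al) \cdot p_*(\be)$ and $\lk_\Si(\al^-, \be) = 0$, so by \eqref{defn:GL},
\[
\cG_{F_\xi}(\al, \be) = \lk_\Si(\al^+, \be) + \lk_\Si(\al^-, \be) - p_*(\al) \cdot p_*(\be) = 0.
\]
Hence $\cG_{F_\xi}$ is singular and $\det(L, F_\xi) = 0$; if $R$ were white, the same argument with the roles of $\xi$ and $\xi^*$ interchanged would give $\det(L, F_{\xi^*}) = 0$.

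The main obstacle I anticipate is the reduction to a representation of $L$ where the destabilization curve $\al$ is visibly non-separating and lies in a non-disk region of $\Si \sm D$, since a cellular diagram would forbid this (every complementary region being a disk). I would handle this by first isotoping $L$ into a non-cellular representation coming from an explicit stabilization of a lower-genus representative, and then transporting the vanishing of the determinant back to any other choice of checkerboard surface via $S^*$-invariance.
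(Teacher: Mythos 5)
Your proposal follows essentially the same route as the paper: both locate an essential simple closed curve $\al$ disjoint from a (non-cellular) diagram of $L$, observe that it lies in a single black (or white) region and hence defines a nonzero class in $H_1(F_\xi;\ZZ)$, show that $\al$ lies in the radical of $\cG_{F_\xi}$, and then transport the resulting vanishing of the determinant back to the original surfaces via $S^*$-invariance. The only substantive difference is that you unpack the step the paper states as ``one can check that each point of intersection contributes $0$'' into an explicit computation $\lk_\Si(\al^+,\be) - \lk_\Si(\al^-,\be) = p_*(\al)\cdot p_*(\be) - 0$ using the local orientability of $F_\xi$ near $\al$; this is a welcome amplification, though you should note that the $2$-chain $p_*(\be)\times[\tfrac12,1]$ needs a small correction near the bands so that its boundary is genuinely $\be$ plus a $1$-cycle at the top, and your non-separating reduction in the separating case is a mild variant of the paper's observation that both complementary components have positive genus, reaching the same conclusion.
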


\begin{proof}
Let $D \subset \Si$ be a diagram for $L$, which is assumed to be cellularly embedded and checkerboard colorable. In particular, the diagram $D$ has minimal support genus $g(\Si).$ However, since $L$ is not a minimal genus representative, it is isotopic to a link $L'$ whose diagram $D'$ does admit a destabilization. 

Notice that $D'$ is also checkerboard colorable, but it is not cellularly embedded. Therefore, we have a non-contractible simple closed curve $\ga$ in $\Si$ disjoint from $D'$. Since $L$ is non-split, $D'$ is necessarily connected. Since $\ga \cap D' = \varnothing,$ it follows that $\ga$ is contained entirely in either a black region or a white region in a coloring of $D'$.  (Without loss of generality, we can assume the coloring $\xi$ has been chosen so that $\ga$ lies in a black region.)

Let $F_\xi$ be the associated checkerboard surface for $L'$. We claim that there is a simple closed curve $\al$ lying entirely in a black region of $\xi$ such that its homology class $[\al]$ is nontrivial as an element in $H_1(F_\xi;\ZZ).$ Indeed, if $\ga$ is a non-separating curve, then we can take $\al =\ga.$ Otherwise, if $\ga$ is a separating curve, then since $D'$ is connected, it lies in one of the connected components of $\Si \sm \ga$. Both connected components have positive genus, and one of them is contained entirely in a black region of the coloring $\xi$. Therefore, we can take $\al$ to be a simple closed curve in the component disjoint from $D'$, and we can further choose $\al$ so that $[\al]\neq 0$. 
 
Since $\al$ is a simple closed curve, its homology class $[\al]$ is primitive as an element in $H_1(F_\xi;\ZZ)$. Therefore, we can find a basis $B$ for $H_1(F_\xi;\ZZ)$ with $\al \in B.$ Further, since $\al$ lies entirely within the black region, we have $\cG_{F_\xi}(\al, \al)=0$. For any other basis element $\be \in B$, $\al$ and $\be$ will intersect transversely in a finite number of points within the black region. Further, one can check that each point of intersection contributes 0 to $\cG_F(\al,\be)$. (This step follows by a similar argument as used in Example \ref{ex:3-7} when we showed $\cG_{F^*}(\al,\ga)=0=\cG_{F^*}(\be,\ga)$ for $F^*, \al,\be,\ga$ in Figure \ref{3-7-a} (right).) Therefore, $\cG_F(\al,\be)=0$ for all $\be \in B.$

It follows that the Gordon-Litherland pairing $\cG_{F_\xi}$ is singular, therefore $n(L',F_\xi) \neq 0.$ Since $L'$ and $L$ are isotopic links, it follows that one of the nullities of $L$ is necessarily nonzero. In particular, one of the determinants of $L$ is equal to zero.
\end{proof}

\begin{figure}[ht]
\centering
\includegraphics[scale=.85]{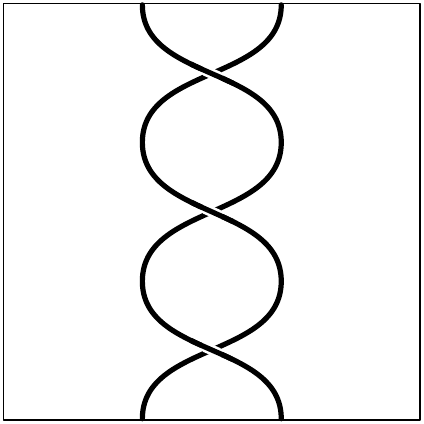}
 \includegraphics[scale=.85]{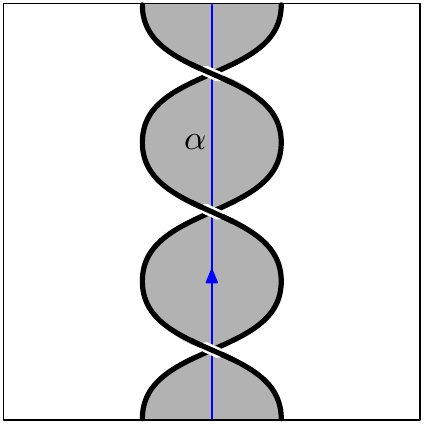}
 \includegraphics[scale=.85]{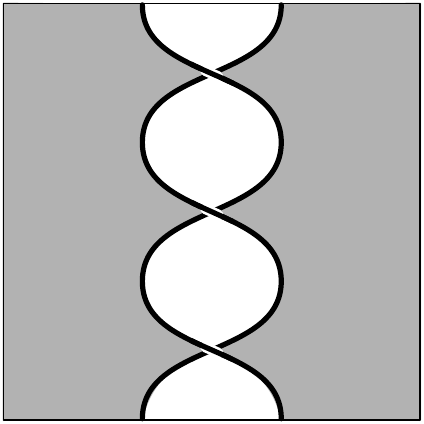}
 \includegraphics[scale=.85]{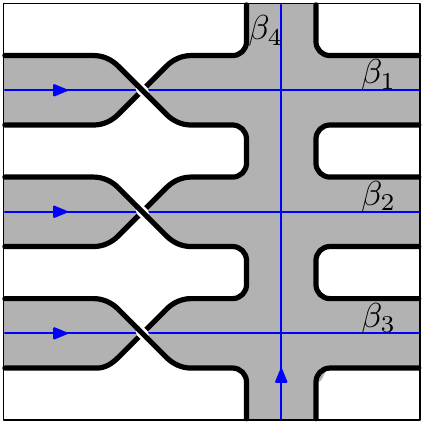}
\caption{A diagram of the trefoil on the torus with dual checkerboard colorings and generators for homology.} \label{non-minimal-trefoil}
\end{figure}

\begin{example}\label{ex:non-minimal-trefoil}
Consider the knot diagram in Figure \ref{non-minimal-trefoil}. Let $F$ be the checkerboard surface in Figure \ref{non-minimal-trefoil} (second from the left) and $F^*$ be the dual checkerboard surface in Figure \ref{non-minimal-trefoil} (third from the left). In terms of the basis $\{\al\}$ pictured, the Gordon-Litherland pairing $\cG_F$ is represented by the matrix $[-3],$ which has signature $-1$. Taking $K'$ a parallel that misses $F$ and applying Equation \eqref{eqn:Euler}, we see that $e(F) = -\lk_\Si(K,K') = 6.$ Therefore, $\si(K,F) =\sig(\cG_F) + e(F)/2 = 2$, and $\det(K,F)= |\det(\cG_F)| = 3$.

The dual surface $F^*$ admits an isotopy to the rightmost picture in Figure \ref{non-minimal-trefoil}. Using the basis $\{\be_1, \dots, \be_4\}$ pictured there,  
the Gordon-Litherland pairing $\cG_{F^*}$ is represented by the matrix
$$\begin{bmatrix} 
1 & 0 & 0 & 0  \\
0 & 1 & 0 & 0  \\
0 & 0 & 1 & 0 \\
0 & 0 & 0 & 0 
\end{bmatrix}.$$
So $\sig(\cG_{F^*}) =3$ and $\det(\cG_{F^*})=0.$ Further, if $K''$ is a parallel missing $F^*$, then by Equation \eqref{eqn:Euler}, it follows that $e(F^*)= -\lk_\Si(K,K'') =0.$ Therefore $\si(K,F^*) =3$ and $\det(K,F^*)=0$. 
\hfill $\Diamond$ \end{example}

As a virtual knot, the diagram in Figure \ref{non-minimal-trefoil} is a non-minimal genus representative of the (classical) trefoil. This example shows that the signatures, determinants, and nullities are not generally invariant under stabilization and destabilization. 

Despite this shortcoming, the invariants derived from the Gordon-Litherland pairing can nevertheless be used to give well-defined invariants of checkerboard colorable virtual links. This relies on combining Kuperberg's theorem with the observation that signatures, determinants, and nullities are invariant under homeomorphisms of the pair $(\Si \times I, \Si \times \{0\})$, with the proviso that one must compute them using a minimal genus representative. Note that, by the discussion in Section \ref{sec:cc}, if $L\subset \Si \times I$ is a minimal genus representative  of a checkerboard colorable virtual link, then $L$ itself admits a checkerboard coloring.

\begin{figure}[ht]
\centering
\includegraphics[scale=1.2]{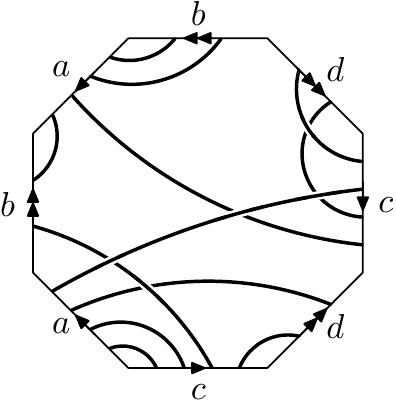}
\quad
 \includegraphics[scale=1.2]{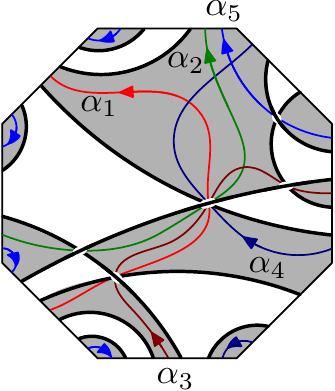}
\quad
 \includegraphics[scale=1.2]{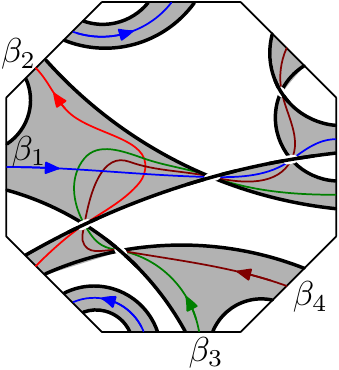}
\caption{
A diagram of the virtual knot 5.2024 on a genus two surface with dual checkerboard colorings and generators for homology.} \label{fig:5-2024}
\end{figure}

The next example shows that the converse to Theorem \ref{thm:vg} is not generally true. In fact, a minimal genus link in a thickened surface may have one or even both determinants equal to zero.

\begin{example} \label{ex:5-2024}
The virtual knot 5.2024 can be represented as a knot $K$ in a thickened surface $\Si \times I$ of genus 2 (see Figure \ref{fig:5-2024}). Since this representative has minimal crossing number and is cellularly embedded, it is a minimal genus representative \cite{Manturov-2013}. However, as we shall see, this knot does not satisfy the hypothesis of Theorem \ref{thm:vg}.

Let $F$ be the checkerboard surface in Figure \ref{fig:5-2024} (middle) and $F^*$ be the dual checkerboard surface in Figure \ref{fig:5-2024} (right). Then in terms of the basis $\{\al_1, \dots, \al_5\}$ pictured, the Gordon-Litherland pairing $\cG_F$ is represented by the matrix
$$\begin{bmatrix} 
0 & 1 & 0 & 1 & 0 \\
1 & 0 & 1 & 1 & 0 \\
0 & 1 & -1 & 1 & 0 \\
1 & 1 & 1 & 1 & 0 \\
0 & 0 & 0 & 0 & 1 \\
\end{bmatrix}.$$
One can easily compute the signature and determinant of this matrix, giving $\sig(\cG_F) =-1$ and $\det(\cG_F)=-1.$ If $K'$ is a parallel that misses $F$, then by Equation \eqref{eqn:Euler}, we see that $e(F) = -\lk_\Si(K,K')=2.$ Therefore, $\si(K,F) =\sig(\cG_F) + e(F)/2 = 0$, and $\det(K,F)= |\det(\cG_F)| = 1$.

For the dual surface $F^*$, using the basis $\{\be_1, \dots, \be_4\}$ pictured, the  Gordon-Litherland pairing $\cG_{F^*}$ is represented by the matrix
$$\begin{bmatrix} 
0 & 0 & -1 & 0  \\
0 & 1 & 1 & 1  \\
-1 & 1 & 1 & -1 \\
0 & 1 & -1 & 1 
\end{bmatrix}.$$
Computing its signature and determinant shows that $\sig(\cG_{F^*}) =1$ and $\det(\cG_{F^*})=0.$ If $K''$ is a parallel that misses $F^*$, then we again using Equation \eqref{eqn:Euler}, we find that $e(F^*)= -\lk_\Si(K,K'')=0.$ Therefore, $\si(K,F^*) =1$, and $\det(K,F^*)=0$.
\hfill $\Diamond$ \end{example}

The  signature, determinant, and nullity can also be computed directly from virtual spanning surfaces. This is particularly convenient when working with checkerboard colorable virtual links.

\begin{figure}[ht]
\centering
\includegraphics[scale=1]{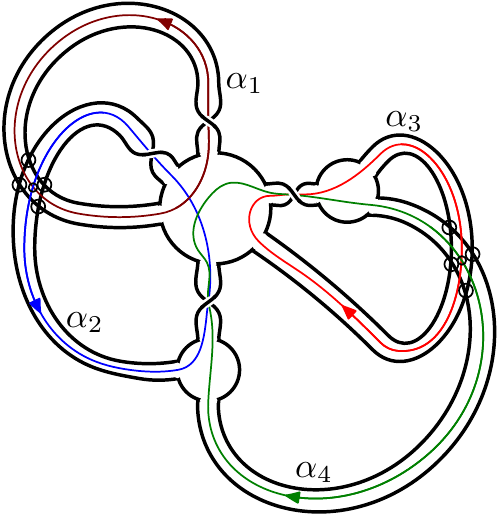}
\caption{A virtual spanning surface for the virtual knot 4.98 and generators for homology.} \label{fig:4-98-gen}
\end{figure}

\begin{example} Let $F$ be the virtual spanning surface for the virtual knot 4.98 pictured in Figure \ref{fig:4-98-gen} and consider the basis $\{\al_1,\dots,\al_4\}$ for $H_1(F;\ZZ)$ pictured there. Then $\cG_F$ is represented by the matrix
$$\begin{bmatrix} 
-1 & 0 & 0 & 0  \\
0 & 0 & 0 & 1  \\
0 & 0 & 1 & 1 \\
0 & 1 & 1 & 2 
\end{bmatrix}.$$

For any virtual knot $K$ with virtual spanning surface $F$, it is not difficult to verify that the Euler number is given by $e(F) = -\vlk(K,K')$, where $K'$ is the parallel to the virtual knot $K$ that misses $F$ and $\vlk(\cdot, \cdot)$ refers to the virtual linking number (see Section \ref{sec:vln}). This follows from Equation \eqref{eqn:Euler} and the observation that, under the correspondence between links in thickened surfaces and virtual links, we have $\lk_\Si(\cdot,\cdot) = \vlk(\cdot, \cdot)$ (again, see Section \ref{sec:vln}).

For this example, one can directly compute that $e(F) = -\vlk(K,K')=0$. An elementary calculation shows that $\sig(\cG_F)=0$. Thus  $\si(K,F)=0$, $n(K,F)=0$, and $\det(K,F)=1$.
\hfill $\Diamond$ \end{example}

Let $M =(m_{ij})$ be a symmetric $n \times n$  matrix over the integers. We say $M$ is \emph{allowable} if either $n$ is even or $m_{ii}$ is odd for some $i$. The next result shows that any allowable symmetric, integral matrix occurs as a representative of the Gordon-Litherland pairing for some virtual spanning surface. It is the analogue, for checkerboard colorable virtual knots, of Theorem 3.7 \cite{Boden-Chrisman-Gaudreau-2017a}, where a realization result for Seifert pairs is proved for almost classical knots.

\begin{theorem}\label{thm:realization}
Any integral $n \times n$ symmetric matrix that is allowable, represents the Gordon-Litherland pairing for some checkerboard colorable virtual knot.
\end{theorem}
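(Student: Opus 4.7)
The plan is to construct, for any allowable symmetric integer matrix $M = (m_{ij})$, an explicit virtual disk-band surface $F$ in the sense of Definition \ref{defn:vbs} consisting of one disk with $n$ bands, whose Gordon-Litherland pairing is represented by $M$ in the natural basis of band cores. The boundary $K = \partial F$ will then be a virtual knot that bounds the unoriented spanning surface $F$, so by Proposition \ref{prop:equiv} it is automatically checkerboard colorable. This parallels the realization result \cite[Theorem 3.7]{Boden-Chrisman-Gaudreau-2017a} for almost classical knots, with the new ingredient being nonorientable (half-twisted) bands, which are needed to realize odd diagonal entries.

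Let $\al_1, \ldots, \al_n$ be the cores of the bands $B_1, \ldots, B_n$, which give a basis for $H_1(F;\ZZ)$. I will realize the diagonal entries by placing $|m_{ii}|$ half-twists (signed by $m_{ii}$) on band $B_i$. Since $p_*(\al_i) \cdot p_*(\al_i) = 0$ by skew-symmetry of the intersection form on $H_1(\Si;\ZZ)$, a direct calculation of $\lk_\Si(\tau \al_i, \al_i)$ from the twist count shows that $\cG_F(\al_i, \al_i) = m_{ii}$. I will realize the off-diagonal entries by inserting $|m_{ij}|$ signed classical band-crossings between bands $B_i$ and $B_j$, while making all remaining unwanted planar intersections virtual band-crossings as permitted by Definition \ref{defn:vbs}. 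A direct linking computation from Equation \eqref{defn:GL}, taking into account both the linking contribution from the classical crossings and the intersection contribution in the handles introduced at the virtual crossings, then shows $\cG_F(\al_i,\al_j) = m_{ij}$.

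The main obstacle is ensuring that $\partial F$ is connected so that $K$ is a knot rather than a multi-component link. Since $\chi(F) = 1 - n$, the number of boundary components of the connected surface $F$ satisfies
\[
b \;=\; 1 + n - 2g \quad (F \text{ orientable of genus } g) \qquad \text{or} \qquad b \;=\; 1 + n - \ga(F) \quad (F \text{ nonorientable}).
\]
Moreover, $F$ is orientable precisely when each band $B_i$ has an even number of half-twists, which by the construction occurs if and only if every $m_{ii}$ is even. Hence $b = 1$ can be achieved exactly when either $n$ is even and $F$ is arranged as an orientable surface with $g = n/2$, or some $m_{ii}$ is odd and $F$ is nonorientable with $\ga(F) = n$. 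This is precisely the allowability condition, and any non-allowable $M$ is immediately ruled out as an obstruction to having a knot boundary.

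It remains to verify that the band configuration can in fact be chosen to realize the required genus in each case; this is the step I expect to be the main technical hurdle. In the orientable case (all $m_{ii}$ even, $n$ even), I will pair the bands into $n/2$ interlocking pairs following the standard model of a genus-$(n/2)$ disk-band Seifert surface, and use virtual band-crossings to route the bands without creating spurious classical intersections; any required off-diagonal entries are then inserted as additional classical band-crossings with compensating adjustments to the twists. In the nonorientable case (some $m_{ii}$ odd), the presence of at least one half-twisted band makes $F$ nonorientable, and a direct Euler-characteristic count combined with the freedom to add virtual band-crossings shows that the configuration can be arranged so that $\ga(F) = n$. A final direct computation of $\cG_F$ in the basis $\{\al_1, \ldots, \al_n\}$ confirms that the matrix is $M$, completing the construction.
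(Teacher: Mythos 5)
Your overall strategy matches the paper's: build a virtual disk-band surface with one disk and $n$ bands, realize $m_{ii}$ by half-twists on band $i$ and $m_{ij}$ by classical band crossings, route bands freely using virtual crossings, and invoke Proposition~\ref{prop:equiv} for checkerboard colorability. The point where the two arguments diverge is precisely where you flag the ``main technical hurdle,'' and that is where your sketch has a real gap.

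Your Euler-characteristic argument correctly shows that $b=1$ forces either $g=n/2$ (so $n$ even) or $\ga(F)=n$ with $F$ nonorientable; this establishes that allowability is \emph{necessary}. But it does not establish \emph{existence} of a band-foot arrangement with $b=1$ for the given twist parities, since $\chi(F)=1-n$ is fixed for every disk-band surface with $n$ bands and tells you nothing about how $b$ and $\gamma$ split $1+n$. In particular, the appeal to ``the freedom to add virtual band-crossings'' cannot help here: virtual band crossings only change how bands are routed in the plane (equivalently, the ambient surface $\Si$), and have no effect on the abstract surface $F$ or on the number of boundary components of $\partial F$. The quantity $b$ is controlled entirely by the combinatorics of how the band feet interleave on $\partial D$ and by the parities of the twists — attaching a flat band with both feet on one boundary circle raises $b$ by one, an odd-twisted band with both feet on one boundary circle leaves $b$ unchanged, and a band across two boundary circles lowers $b$ by one — and these cases interact in a way that depends on the positions of all the feet. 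The paper resolves this by first applying a unimodular congruence so that $m_{nn}$ is odd (a step you omit), then pairing bands $2i-1, 2i$ with \emph{alternating} feet and a virtual crossing when at least one of $m_{2i-1,2i-1},m_{2i,2i}$ is even, \emph{nested} feet when both are odd, and a final singleton band with adjacent feet when $n$ is odd; an induction on these configurations yields a connected boundary. Without supplying such an explicit arrangement (or an equivalent inductive argument showing $b=1$ is always attainable under the allowability hypothesis), your construction is not complete.
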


We shall prove this by constructing a virtual spanning surface with boundary a virtual knot and whose Gordon-Litherland pairing is the given matrix. The surface constructed will have first homology of rank $n$, and it will be orientable if and only if all the diagonal entries of the matrix are even. Since the first homology of an orientable surface with connected boundary must have even rank, this explains why the matrix is required to be allowable.

\begin{proof}
The proof is similar to \cite[Theorem 3.7]{Boden-Chrisman-Gaudreau-2017a}, and so we provide a sketch. 

Let $M =(m_{ij})$ be a symmetric $n \times n$  matrix over the integers, and assume $M$ is allowable. Note that if the theorem is true for $M$, then it is also true for any matrix obtained from $M$ under unimodular congruence. Since $M$ is allowable, then either $n$ is even or some diagonal entry of $M$ is odd. In the latter case, we can arrange, by a unimodular congruence, that the last diagonal entry $m_{nn}$ is odd.

We will construct a virtual spanning surface whose associated Gordon-Litherland pairing is represented by $M$. Start with a 2-disk in $\RR^2$ sitting below the $x$ axis with the line segment $\{(t,0) \mid 0\leq t \leq 2n\}$ on its boundary. If all the diagonal entries of $M$ are even, then $n$ is necessarily even. In this case, for $1<i<n/2,$ we attach the bands in pairs, so that the feet are alternating, i.e., so that the feet of the $(2i-1)$-st and $(2i)$-th bands are centered at the pairs of points $(4i-3,0), (4i-1,0)$ and $(4i-2,0), (4i,0)$, respectively. The band crossing between them should be drawn as a virtual band crossing.

If some diagonal entry of $M$ is odd, then we attach the bands in pairs as follows. For $1<i<\lfloor n/2 \rfloor$, the $i$-th pair consists of the $(2i-1)$-st and $(2i)$-th bands. If one or both of $m_{2i-1, 2i-1}, m_{2i, 2i}$ is even, then we attach the bands as above with their feet alternating and so that the band crossing between them is virtual.

If instead both $m_{2i-1, 2i-1}$ and $m_{2i, 2i}$ are odd, then we attach the  bands with their feet nested, i.e., so that the feet of the $(2i-1)$-st and $(2i)$-th bands are centered at the pairs of points $(4i-3,0), (4i-2,0)$ and $(4i-1,0), (4i,0)$, respectively.   In this case, there is no band crossing between them.

In case $n$ is odd, there will be one additional band, and recall that we have arranged that $m_{nn}$ is odd. This last band should be attached with its feet centered at  $(2n-1,0)$ and $(2n,0)$. An easy proof by induction shows that the resulting virtual spanning surface will bound a virtual knot, and we leave the details to the indefatigable reader.

With the bands in place, next we arrange for them to have the correct self-linking. For $1 \leq i \leq n$, we insert $|m_{ii}|$ half twists into the $i$-th band, where the twists are right-handed if $m_{ii}>0$ and left-handed if $m_{ii}<0$. 

The last step is to arrange for the correct linking between the $i$-th and $j$-th bands. Fix orientations on each of the bands so its core runs from left to right. For $i<j$, insert a sequence of band crossings paired with virtual band crossings between the $i$-th and $j$-th bands, so that $i$-th band crosses over the $j$-th band $|m_{ij}|$ times. (See \cite[Figure 11]{Boden-Chrisman-Gaudreau-2017a} for an illustration.) Here, with respect to the orientations on the bands, the band crossings are positive if $m_{ij}>0$ and negative if $m_{ij}<0$. It may be necessary for the $i$-th band to cross some of the other bands that are in the way, and that can be achieved using virtual band crossings. 

The resulting virtual spanning surface has $H_1(F;\ZZ) = \ZZ^n$, with the cores of the bands as a generating set. Its Gordon-Litherland pairing is easily seen to be represented by the matrix $M$.
\end{proof}


\section{Duality} \label{sec-4}
In this section, we prove a duality result relating the invariants of one $S^*$-equivalence class of spanning surfaces to those obtained under restriction to the other $S^*$-equivalence class.  This result has practical value in that it allows one to compute both sets of invariants from one spanning surface. 

Let $g =g(\Si)$ be the  genus of the surface $\Si$, and let $L \subset \Si \times I$ be a link with diagram $D$ which is cellularly embedded. Consequently, it follows that the inclusion map $i \co D \to \Si$ induces a surjection $i_*\co H_1(D;\ZZ) \to H_1(\Si;\ZZ).$

Given any spanning surface $F$ for $L$, we can construct a new surface $F'=F\#_\tau \Si$ by connecting it to a parallel copy of the Carter surface near $\Si \times \{0\}$ to $F$ by a small thin tube $\tau$. Clearly $[F] + [F\#_\tau \Si] = j_*([\Si])$ in $H_2(\Si \times I, L; \ZZ_2).$ Thus, $F$ and $F'$ are not $S^*$-equivalent (unless $L$ is a link in $S^2 \times I$). Since $F$ and $F'$ have same local behaviour near $L$, Lemma \ref{lemma:correction} implies that $e(F',L) = e(F,L)$. 

\begin{theorem} \label{thm:chrom-dual}
Let $F \subset \Si \times I$ be a connected spanning surface such that the map $H_1(F;\ZZ) \to H_1(\Si \times I;\ZZ)$ is surjective. Set  $\cK_F=\,\text{Ker}\,(H_1(F;\ZZ) \to H_1(\Si \times I;\ZZ))$ and let $\cG_{F}|_{\cK_F}$ denote the restriction of $\cG_{F}$ to $\cK_F.$Then the link signature, determinant, and nullity for $F'$ are equal to those of $\cG_{F}|_{\cK_F}$, i.e., 
$$\sig(\cG_{F'}) = \sig(\cG_{F}|_{\cK_F}),\;\; |\det(\cG_{F'})| = |\det(\cG_{F}|_{\cK_F})|, \text{ and } \nullity(\cG_{F'}) = \nullity(\cG_{F}|_{\cK_F}).$$
 \end{theorem}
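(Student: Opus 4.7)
The plan is to compute the matrix of $\cG_{F'}$ in a basis of $H_1(F';\ZZ)$ adapted to the decomposition $H_1(F';\ZZ) \cong H_1(F;\ZZ) \oplus H_1(\Si_0;\ZZ)$, where $\Si_0$ denotes the parallel copy of $\Si$ used in constructing $F'$, and then to reduce to $\cG_F|_{\cK_F}$ via an integral Schur complement. A Mayer--Vietoris computation on $F' = F \#_\tau \Si_0$ produces this direct sum decomposition. After isotoping so that $F \subset \Si \times [\tfrac{1}{2}, 1]$ and $\Si_0 = \Si \times \{\tfrac{1}{4}\}$, with the tube $\tau$ vertical, the block entries of $\cG_{F'}$ follow from the defining formula together with the identity $p_*(\tau \al) = 2\, p_*(\al)$. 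For $\al, \be \in H_1(F)$, one has $\cG_{F'}(\al, \be) = \cG_F(\al, \be)$ because $\wt{F'}|_F = \wt F$ forces $\tau_{F'}\al = \tau_F \al$. For $\al \in H_1(F)$ and $\ga \in H_1(\Si_0)$, the cycle $\tau_F \al$ sits entirely above $\ga$, so the relative linking equals $p_*(\tau_F \al) \cdot \ga = 2\, p_*(\al) \cdot \ga$, giving $\cG_{F'}(\al, \ga) = p_*(\al) \cdot \ga$. For $\ga, \de \in H_1(\Si_0)$, the transfer splits as $\tau_{F'} \ga = \ga_+ + \ga_-$ with $\ga_+$ above $\de$ contributing $\ga \cdot \de$ and $\ga_-$ below $\de$ contributing $0$, so $\cG_{F'}(\ga, \de) = \ga \cdot \de - \ga \cdot \de = 0$.

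Next I split $H_1(F;\ZZ) = \cK_F \oplus \cQ$ so that $p_*|_\cQ \co \cQ \to H_1(\Si;\ZZ)$ is an isomorphism; this is possible because the target is free abelian. Identifying the basis of $\cQ$ with a basis of $H_1(\Si_0;\ZZ)$ via $p_*|_\cQ$, the matrix of $\cG_{F'}$ takes the block form
\[
M = \begin{bmatrix} A & B & 0 \\ B^{\tr} & C & J \\ 0 & J^{\tr} & 0 \end{bmatrix},
\]
where $A = \cG_F|_{\cK_F}$, the combined upper-left block formed by $A, B, B^{\tr}, C$ represents $\cG_F$ on $H_1(F;\ZZ)$, and $J \in GL_{2g}(\ZZ)$ is the matrix of the non-degenerate intersection form on $H_1(\Si;\ZZ)$.

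I then apply an integral Schur complement with respect to the lower-right $4g \times 4g$ block
\[
\delta = \begin{bmatrix} C & J \\ J^{\tr} & 0 \end{bmatrix}.
\]
Swapping block rows shows $\det \delta = (\det J)^2 = 1$, so $\delta$ is invertible over $\ZZ$, and a direct inversion yields the identity
\[
\delta^{-1} \begin{bmatrix} B^{\tr} \\ 0 \end{bmatrix} = \begin{bmatrix} 0 \\ J^{-1} B^{\tr} \end{bmatrix},
\]
which shows both that the Schur complement reduces to $A$ and that the associated change-of-basis matrix has integer entries. This yields a unimodular congruence $M \equiv A \oplus \delta$. Since $\delta$ is invertible with $|\det \delta| = 1$, it has nullity zero, and over $\RR$ the elimination of the symmetric block $C$ reduces $\delta$ to a standard hyperbolic form, which has signature zero. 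Summing the contributions gives the three identities $\sig(\cG_{F'}) = \sig A$, $|\det(\cG_{F'})| = |\det A|$, and $\nullity(\cG_{F'}) = \nullity A$.

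The main technical obstacle is the geometric computation showing $\cG_{F'}(\ga, \de) = 0$ for $\ga, \de \in H_1(\Si_0)$. This requires careful tracking of how the orientation double cover $\wt{F'} \to F'$ restricts to the orientable subsurface $\Si_0$, realized geometrically as two push-offs of $\Si_0$ to opposite sides within $\Si \times I$, together with a clean sign analysis of the relative linking of curves at nearby heights. The cases $F$ orientable and $F$ non-orientable must be handled uniformly, which is conceptually straightforward but bookkeeping-heavy.
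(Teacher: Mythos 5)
Your proof is correct and follows essentially the same strategy as the paper's: choose a basis for $H_1(F';\ZZ)$ adapted to the splitting $\cK_F \oplus \cQ \oplus H_1(\Si_0;\ZZ)$, exhibit the resulting three-by-three block form with vanishing outer corners and a unimodular lower-right $4g \times 4g$ block, and eliminate to block-diagonal form before observing that the lower block contributes zero signature, unit determinant, and zero nullity. The paper asserts the block shape and the reduction somewhat tersely (``a straightforward exercise in linear algebra'' via ``row and column operations from the last two blocks''); you carry out the same reduction explicitly as an integral Schur complement and, helpfully, derive the block entries directly from the definition of $\cG_{F'}$ (the identities $\cG_{F'}|_{H_1(F)} = \cG_F$, $\cG_{F'}(\al,\ga) = p_*(\al)\cdot\ga$, and $\cG_{F'}(\ga,\de)=0$), which the paper leaves implicit. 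One minor point worth flagging in a write-up: the integrality of the congruence does not actually require $J^{-1}$ to be integral in general, only that $J$ represent a unimodular pairing — here $J$ is the standard symplectic matrix, so $J^{-1} = J^\tr$ is integral and the issue does not arise, but it is worth saying so explicitly since the determinant statement requires congruence over $\ZZ$ while the signature and nullity statements only require congruence over $\RR$.
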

\begin{remark} \label{rem:convention}
The signature and determinant of the empty matrix are 0 and 1 by convention.
\end{remark}

\begin{proof}
Since $H_1(F;\ZZ) \to H_1(\Si \times I;\ZZ)$ is surjective, there is a basis for $H_1(F;\ZZ)$ consisting of curves $\{\al_1,\ldots, \al_{n}; \ga_1,\ldots, \ga_{2g}\}$ in $F$  such that  $\al_1,\ldots, \al_{n} \subset \cK_F$ and $\ga_1,\ldots, \ga_{2g}$ map to a standard symplectic basis for $H_1(\Si;\ZZ)$.

We can extend this to a basis for $H_1(F';\ZZ)$  of the form

$$\be'=\{ \al_1,\ldots, \al_n; \ga_1,\ldots, \ga_{2g}; \ga'_1,\ldots, \ga'_{2g} \},$$ 
where $\ga'_1, \ldots, \ga'_{2g}$ are the images of a standard symplectic basis for $H_1(\Si;\ZZ)$ in the parallel copy of the Carter surface that is attached to $F$ in forming $F'$. Then the Gordon-Litherland matrix with respect to the basis $\be'$ has block decomposition as the symmetric matrix:
\begin{equation} \label{eqn:big-matrix}
\begin{bmatrix} A&*& 0 \\ *&B & J_g \\ 0&J_g^\tr & 0 \end{bmatrix},
\end{equation}
where $A$ is the $n \times n$ matrix for the restriction of $\cG_{F}$ to $\cK_F$, $B$ is the $2g \times 2g$ matrix obtained by restricting the Gordon-Litherland form $\cG_{F}$ to $\{\ga_1,\ldots, \ga_{2g}\}$ and $$J_g=\begin{bmatrix} 0& I_g \\ -I_g & 0 \end{bmatrix} $$ is the standard $2g \times 2g$ symplectic matrix representing the intersection form on $\Si$. (Here $I_g$ denotes the $g \times g$ identity matrix.) A straightforward exercise in linear algebra then shows that the matrix in Equation \eqref{eqn:big-matrix} is unimodular congruent to one of the form:
$$\begin{bmatrix} A&0& 0 \\ 0&B & J_g \\ 0&J_g^\tr & 0 \end{bmatrix}.$$
This can be achieved using only row and column operations from the last two blocks of rows and columns. Consequently, the submatrix $A$ is unchanged throughout these operations. The signature of the above matrix is easily seen to be equal to that of $A$, since signature is additive over block orthogonal decompositions, and since $$\sig \left( \begin{bmatrix} B& J_g \\ J_g^\tr & 0 \end{bmatrix}\right) = 0.$$
It follows that $\sig(\cG_{F'}) = \sig(\cG_{F}|_{\cK_F}).$

Notice that the determinant of a symmetric matrix is invariant under unimodular congruence up to sign. Therefore, arguing as above, we see that
\begin{equation*} 
\det \begin{bmatrix} A&*& 0 \\ *&B & J_g \\ 0&J_g^\tr & 0 \end{bmatrix} = \pm \det \begin{bmatrix} A&0& 0 \\ 0&0 & J_g \\ 0&J_g^\tr & 0 \end{bmatrix}=\pm\det A.
\end{equation*}
This implies that $|\det(\cG_{F'})| = |\det(\cG_{F}|_{\cK_F})|,$ and equality of $\nullity(\cG_{F'})$ and $\nullity(\cG_{F}|_{\cK_F})$ follows similarly. This completes the proof.
\end{proof}

\begin{remark} One could further arrange that $B$ is diagonal, using unimodular congruence over $\ZZ$. In addition, one could eliminate the diagonal entries by working over $\QQ$. This step may require dividing by 2 in the row and/or column operations. 
\end{remark}
 
Theorem \ref{thm:chrom-dual} allows one to compute both sets of invariants from one spanning surface.  This is illustrated in the next example, which concerns the alternating knot in the torus in Figure \ref{3-7-a}, whose invariants were computed in Example \ref{ex:3-7}.
\begin{example} \label{ex:conv}
Applying Theorem \ref{thm:chrom-dual} to the first surface $F$ in Figure \ref{3-7-a} (middle), we compute the signature, determinant, and nullity invariants for the second surface $F^*$ in Figure \ref{3-7-a} (right). Set $\cK_F=\text{Ker} \,(H_1(F;\ZZ) \to H_1(\Si;\ZZ))$, and note that $\cK_F =0$. Therefore, $\sig(\cG_F|_{\cK_F}) =0.$ Since $e(F)=4,$ it follows that $\si(K,F^*) = 2,  \; \det(K,F^*) =1,$ and $n(K,F^*)=0$ (see Remark \ref{rem:convention}). These values agree with those obtained in Example \ref{ex:3-7}, but with considerable simplification.
\hfill $\Diamond$ \end{example}

\begin{example} \label{ex:conv2}
This example is like Example \ref{ex:conv} but with the roles of the two surfaces reversed. Namely, we apply Theorem \ref{thm:chrom-dual} to the second surface $F^*$ in Figure \ref{3-7-a} (right) and use it to compute the signature, determinant, and nullity invariants for the first surface $F$ in Figure \ref{3-7-a} (middle). Set $\cK_{F^*}=\text{Ker} \,(H_1(F^*;\ZZ) \to H_1(\Si;\ZZ))$. Then $\cK_{F^*}$ is generated by $\al-\be$. Since $\cG_{F^*}(\al,\be)=0$, it follows that 
$$\cG_{F^*}(\al-\be, \al-\be) =\cG_{F^*}(\al, \al) +\cG_{F^*}(\be, \be)=1+1=2.$$

Therefore, $\cG_{F^*}|_{\cK_{F^*}}$ is represented by the matrix  $\begin{bmatrix} 2 \end{bmatrix}$. Since $e(F^*)=-2,$ it follows that $\si(K,F) = \sig(\cG_{F^*}|_{\cK_{F^*}}) +e(F^*)/2 = 1 -1 =0,  \; \det(K,F) =2,$ and $n(K,F)=0$. These values agree with those obtained in Example \ref{ex:3-7}.
\hfill $\Diamond$ \end{example}

\begin{example} \label{ex:conv-5-2024}
In a similar way, we can apply Theorem \ref{thm:chrom-dual} to the second surface $F^*$ in Figure \ref{fig:5-2024} (right) to simplify the computations of the signature, determinant, and nullity invariants for the first surface $F$ in Figure \ref{fig:5-2024} (middle). 

Set $\cK_{F^*}=\text{Ker} \,(H_1(F^*;\ZZ) \to H_1(\Si;\ZZ))$, and note that once again we have $\cK_{F^*} =0$. Therefore, $\sig(\cG_{F^*}|_{\cK_{F^*}}) =0.$ Since $e(F^*)=0,$ it follows immediately that $\si(K,F) = 0,  \; \det(K,F) =1,$ and $n(K,F)=0$ (cf., Example \ref{ex:5-2024}).
\hfill $\Diamond$ \end{example}

In Section \ref{sec-5}, we will apply Theorem \ref{thm:chrom-dual} to relate the link invariants coming from the Gordon-Litherland pairing to the combinatorial invariants defined by Im, Lee, and Lee  \cite{Im-Lee-Lee-2010}.

Next, we apply Theorem \ref{thm:chrom-dual} to give a bound on the  difference of the two signatures of a checkerboard colorable link $L \subset \Si \times I$ in terms of the nullities and the genus of $\Si$. 

To that end, we recall a well-known and useful method for computing signatures of symmetric matrices in terms of chains of principal minors. The following is a restatement of \cite[Proposition~13.32]{Burde-Zieschang-Heusener}.

Let $Q$ be a symmetric real matrix of rank $r$. Then there exists a chain $M_0, M_1,\dots, M_r$ of principal minors of $Q$ with $M_0=1$ such that, for $0\leq i < r,$
\begin{itemize}
\item[(i)]  $M_i$ is a principal minor of $M_{i+1},$ and 
\item[(ii)] no two consecutive determinants $M_i$ and $M_{i+1}$ vanish. 
\end{itemize}
Then the signature of $Q$ is given by
\begin{equation}\label{sig-formula}
\si(Q) =\sum_{i=0}^{r-1} \text{sign}(M_{i}M_{i+1}).
\end{equation}

Now let $n,n'$ be positive integers with $n>n'$, and let $M$ be a symmetric $n\times n$ matrix defined over $\QQ$. Let $M'$ be the  submatrix of $M$ of size $n'\times n'$ such that $M'_{ij}=M_{ij}$ for $ 1\leq i,j\leq n'$. In other words, there is a block decomposition of matrices
$$M=\left[\begin{matrix} M'&M''\\ (M'')^{\tr}&M'''\end{matrix}\right].$$ 

Let $\si_M,\si_{M'}$ denote the signatures of $M,M'$, respectively, and $k,k'$ their nullities. Therefore, $\text{rank}(M)=n-k$ and $\text{rank}(M')=n'-k'$. Clearly, $ \text{rank}(M) \geq \text{rank}(M')$, thus $n-k \geq n'-k'$. In particular, $(n-n')-(k-k')\geq 0.$
   
\begin{lemma}\label{sig-nul}
We have 
\begin{equation}\label{original}
|\si_M-\si_{M'}|\leq (n-n')-(k-k').
\end{equation}
\end{lemma}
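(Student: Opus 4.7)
The plan is to use Cauchy's eigenvalue interlacing theorem for principal submatrices, which yields the inequality directly and sidesteps the bookkeeping involved in extending a chain of principal minors. First, I would record the eigenvalue type counts: write $n = p + q + k$ and $n' = p' + q' + k'$, where $p, q, k$ (respectively $p', q', k'$) denote the numbers of positive, negative, and zero eigenvalues of $M$ (respectively $M'$), so that $\si_M = p - q$ and $\si_{M'} = p' - q'$. The target inequality then reads
\begin{equation*}
|(p - p') - (q - q')| \leq (n - n') - (k - k').
\end{equation*}

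Next I would invoke Cauchy interlacing: listing eigenvalues in decreasing order, one has $\lambda_i(M) \geq \lambda_i(M') \geq \lambda_{i + (n - n')}(M)$ for $1 \leq i \leq n'$. The left inequality shows that $M$ has at least as many positive eigenvalues as $M'$: if $\lambda_{p'}(M') > 0$ then $\lambda_{p'}(M) \geq \lambda_{p'}(M') > 0$, so $p \geq p'$. Applying the same inequality to $-M$, whose principal submatrix $-M'$ has $q'$ positive eigenvalues, gives $q \geq q'$. Setting $a = p - p' \geq 0$ and $b = q - q' \geq 0$, the identities $p + q + k = n$ and $p' + q' + k' = n'$ combine to yield $a + b = (n - n') - (k - k')$.

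Since $\si_M - \si_{M'} = a - b$ with $a, b \geq 0$, the elementary bound $|a - b| \leq a + b$ delivers exactly the claimed inequality. The only nontrivial input is Cauchy interlacing itself, which is standard and presents no real obstacle. If one prefers to stay within the chain-of-principal-minors framework set up immediately before the lemma, the main obstacle becomes showing that a signature-realizing chain for $M'$ extends to one for $M$ of rank $n-k$, contributing exactly $(n - n') - (k - k')$ additional signs of $\pm 1$ via the formula in \eqref{sig-formula}. This extension is feasible because every principal submatrix of $M'$ is also a principal submatrix of $M$, and an $r' \times r'$ nonsingular principal submatrix of $M'$ can be enlarged to an $r \times r$ nonsingular principal submatrix of $M$ by adjoining rows and columns one at a time while avoiding two consecutive vanishing minors.
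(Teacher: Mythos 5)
Your proof is correct, and it takes a genuinely different route from the paper. The paper stays inside the chain-of-principal-minors framework it set up just before the lemma (the restatement of \cite[Proposition~13.32]{Burde-Zieschang-Heusener}): it extends a rank-realizing chain for $M'$ to one for $M$ and then bounds the extra $(n-k)-(n'-k')$ terms in the sum \eqref{sig-formula} by $\pm 1$ each. You instead invoke Cauchy eigenvalue interlacing for principal submatrices to get the monotonicity $p\geq p'$ and $q\geq q'$ of the positive and negative eigenvalue counts, and then the inequality drops out of the identity $(p-p')+(q-q')=(n-n')-(k-k')$ and the elementary bound $|a-b|\le a+b$ for $a,b\ge 0$. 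Your argument is cleaner in one respect: the paper's claim that one can ``arrange'' a chain for $M$ whose initial segment is a chain for $M'$, with both $\det M_{n'-k'}$ and $\det M_{n-k}$ nonzero and no two consecutive vanishing minors, is plausible but not spelled out, whereas interlacing is a black-box theorem that delivers the two monotonicity inequalities immediately. What the paper's approach buys in exchange is thematic consistency, since it reuses the exact tool (the chain-of-minors signature formula) that it also deploys in the proof of Theorem~\ref{thm:cc}, so the reader only needs to internalize one computational device. Both proofs are correct and essentially the same length; yours is the more self-contained of the two.
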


\begin{proof}
Choose a chain of principal minors $M_1,M_2,\ldots,M_{n-k}$ of $M$, where each $M_i$ is an $i\times i$ submatrix of $M$. Further, we can arrange that $M_1,M_2,\ldots,M_{n'-k'}$ is a chain of principal minors of $M'$, and that $\det (M_{n'-k'})\neq 0$, and $\det (M_{n-k})\neq 0$. Lastly, we assume that no two consecutive minors have zero determinant.

By Equation \eqref{sig-formula}, 
\begin{equation}\label{(1)}
\si(M_{n'-k'})-[(n-k)-(n'-k')]\leq \si(M_{n-k})\leq \si(M_{n'-k'})+[(n-k)-(n'-k')].
\end{equation}
Notice that $\si_{M'}=\si(M_{n'-k'})$, and $\si_{M}=\si(M_{n-k})$. Therefore, Equation \eqref{(1)} shows that
\begin{equation*} 
-[(n-n')-(k-k')] \leq \si_{M}-\si_{M'}\leq (n-n')-(k-k'),
\end{equation*}
and Equation \eqref{original} follows.
\end{proof}

\begin{corollary}
Let $L \subset \Si \times I$ be a checkerboard colorable link with spanning surface $F$. If $F'$ is another spanning surface which is not $S^*$-equivalent to $F$, then 
$$|\si(L,F) - \si(L,F')| + |n(L,F) - n(L,F')| \leq 2g(\Si).$$
In particular, if $\det(L,F)\neq 0$ and $\det(L,F')\neq 0,$ then $|\si(L,F) - \si(L,F')| \leq 2g(\Si).$
\end{corollary}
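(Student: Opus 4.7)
The plan is to deduce the corollary from Theorem \ref{thm:chrom-dual} applied in both directions (once with $F$, once with $F'$), combined with the matrix inequality of Lemma \ref{sig-nul}.

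First I would carry out a reduction step. By Proposition \ref{prop-S*} and the $H_2(\Si \times I, L;\ZZ_2)$-characterization of $S^*$-equivalence given at the end of Section \ref{section:S-star-equivalence}, the fact that $F$ and $F'$ lie in distinct $S^*$-classes means $[F] + [F'] = j_*([\Si])$; equivalently, $F'$ is $S^*$-equivalent to $F \#_\tau \Si$. Since $\si$, $n$, and $\det$ are $S^*$-invariants, I may assume $F' = F \#_\tau \Si$. After attaching further small tubes (which are $S^*$-moves) along suitable 1-handles in $\Si \times I$, I may arrange that both $F$ and $F'$ are connected and that the maps $H_1(F;\ZZ) \to H_1(\Si \times I;\ZZ)$ and $H_1(F';\ZZ) \to H_1(\Si \times I;\ZZ)$ are both surjective, as needed to apply Theorem \ref{thm:chrom-dual}.

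Applying Theorem \ref{thm:chrom-dual} to $F$ yields $\sig(\cG_{F'}) = \sig(\cG_F|_{\cK_F})$ and $\nullity(\cG_{F'}) = \nullity(\cG_F|_{\cK_F})$, where $\cK_F$ has codimension $2g$ in $H_1(F;\ZZ)$. Combined with $e(F,L) = e(F',L)$ (immediate from Lemma \ref{lemma:correction}, since $F$ and $F'$ have identical local behaviour along $L$), this gives $\si(L,F) - \si(L,F') = \sig(\cG_F) - \sig(\cG_F|_{\cK_F})$ and $n(L,F) - n(L,F') = \nullity(\cG_F) - \nullity(\cG_F|_{\cK_F})$. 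Choosing a basis of $H_1(F;\ZZ)$ whose first vectors span $\cK_F$, Lemma \ref{sig-nul} applied to the resulting principal-submatrix inclusion gives
$$|\si(L,F) - \si(L,F')| \;\leq\; 2g - \bigl(n(L,F) - n(L,F')\bigr).$$

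The symmetric argument with $F$ and $F'$ swapped --- valid since $F' \#_\tau \Si$ has the same class as $F$ in $H_2(\Si \times I,L;\ZZ_2)$ (two copies of $j_*([\Si])$ cancel mod $2$) and is therefore $S^*$-equivalent to $F$ --- yields the companion bound
$$|\si(L,F) - \si(L,F')| \;\leq\; 2g - \bigl(n(L,F') - n(L,F)\bigr).$$
Whichever of $n(L,F) - n(L,F')$ or its negative is non-negative, the corresponding inequality gives $|\si(L,F) - \si(L,F')| + |n(L,F) - n(L,F')| \leq 2g$. The second statement follows immediately: when both determinants are non-zero, both nullities vanish, reducing the bound to $|\si(L,F) - \si(L,F')| \leq 2g$. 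The main obstacle is the preparatory $S^*$-adjustment of both $F$ and $F'$ so that Theorem \ref{thm:chrom-dual} applies symmetrically; once that is arranged, the rest is linear algebra via Lemma \ref{sig-nul}.
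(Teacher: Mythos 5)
Your proof is correct and follows essentially the same route as the paper's: reduce to $F' \sim_{S^*} F \#_\tau \Si$, apply Lemma \ref{sig-nul} to a codimension-$2g$ principal submatrix inclusion, then symmetrize in $F$ and $F'$ to absorb the absolute value on the nullity difference. The only cosmetic difference is the choice of inclusion to which Lemma \ref{sig-nul} is applied --- the paper uses $\cG_F$ as a principal submatrix of $\cG_{F\#_\tau\Si}$ directly (so no surjectivity hypothesis is needed), whereas you pass through Theorem \ref{thm:chrom-dual} and use $\cG_F|_{\cK_F}$ inside $\cG_F$ (which forces your preparatory tubing step to arrange surjectivity of $H_1(F;\ZZ)\to H_1(\Si\times I;\ZZ)$) --- but the resulting pair of inequalities is identical.
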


\begin{proof}
Since $L$ has at most two $S^*$-equivalence classes of spanning surfaces, it follows that $F'$ is $S^*$-equivalent to $F\#_\tau \Si$. Therefore, 
$$|\si(L,F) - \si(L,F')| = |\si(L,F) - \si(L,F \#_\tau \Si)| = |\sig(\cG_F) - \sig(\cG_{F \#_\tau \Si})|.$$

Lemma \ref{sig-nul} applies to show that 
$$|\sig(\cG_{F \#_\tau \Si})-\sig(\cG_F)| \leq 2g - n(L,F \#_\tau \Si)+n(L,F).$$
Further, since $F$ must be $S^*$-equivalent to $F' \#_\tau \Si$, the same argument with the surfaces reversed shows that
$$|\si(L,F') - \si(L,F)| = |\sig(\cG_{F' \#_\tau \Si})-\sig(\cG_{F'})| \leq 2g - n(L,F' \#_\tau \Si)+n(L,F').$$
Noting that $n(L,F \#_\tau \Si) = n(L,F')$ and $n(L,F' \#_\tau \Si) = n(L,F)$, the above two equations combine  to give the desired conclusion.
\end{proof}

\section{Goeritz matrices and duality} \label{sec-5}
In this section, we will show  that the link invariants from the Gordon-Litherland pairing can be computed algorithmically. This is achieved by relating them to combinatorial invariants of virtual links derived from Goeritz matrices \cite{Im-Lee-Lee-2010}.

We begin with a description of the signature, determinant, and nullity invariants of checkerboard colorable virtual links due to Im, Lee, and Lee \cite{Im-Lee-Lee-2010}. The main result in this section is a duality theorem which relates the invariants of Section \ref{subsec:sign-GL}, which are defined in terms of the Gordon-Litherland pairing, with the combinatorially defined invariants of Im, Lee, and Lee, which are defined in terms of Goeritz matrices \cite{Im-Lee-Lee-2010}. As a consequence, the methods of \cite{Im-Lee-Lee-2010} give simple procedures for computing the link signatures, determinants, and nullities. These formulas are analogous to those given by Gordon and Litherland for classical links (cf.~\cite[Section 1]{GL-1978}), and as we shall see they are a direct consequence of Theorem \ref{thm:chrom-dual}.


\subsection{Tait graphs and Goeritz matrices} \label{subsec:goeritz}
In this subsection, we recall the construction of the Tait graph and Goeritz matrix associated to a checkerboard colored link in a thickened surface. We use this to define the associated signature, determinant, and nullity invariants, following \cite{Im-Lee-Lee-2010}.

We begin by recalling the construction of the Tait graph associated to a checkerboard colored link in a thickened surface.

Suppose $L \subset \Si \times I$ is a link with link diagram $D$ and checkerboard coloring $\xi$. Let $F_\xi$ be the checkerboard surface obtained from the black regions. Recall that $F_\xi$ consists of one disk for each black region and one half twisted band for each crossing. The \textit{Tait graph} is denoted $\Ga_\xi$ and is defined to be the graph in $\Si$ with one vertex for each black disk and one edge for each band. It follows that $\Ga_\xi$ is a deformation retract of $F_\xi$, alternatively $\Ga_\xi$ is the deformation retract of $\Si$ after removal of all the white disks.

Let $C_D = \{c_1,\ldots, c_k\}$ denote the set of crossings of $D$ and enumerate the white regions $X_0,X_1,\ldots, X_n$ of $\Si \sm D$. For each crossing $c \in C_D$, we define its incidence number $\eta(c) = \pm 1$ with respect to the checkerboard coloring $\xi$ according to Figure \ref{crossing-eta-type}.

Define an $(n+1) \times (n+1)$ matrix $G'_{\xi}(D)=(g_{ij})_{i,j=0,\ldots, n}$ by setting
$$g_{ij} = 
\begin{cases} 
-\sum  \eta(c) & \text{ if $i \neq j$,} \\
-\sum_{k\neq i} {g_{ik}} & \text{ if $i = j$.}
\end{cases}$$
In the above formulas, the first sum is taken over all crossings $c \in C_D$ incident to both $X_i$ and $X_j$, and the second guarantees that $\sum_{j=0}^n g_{ij} = 0$ for each $i=0, \ldots, n$. Notice that  $G'_{\xi}(D)$ is a symmetric matrix with integer entries.

\begin{definition} \label{GL-pairing}
The Goeritz matrix $G_\xi(D)$ is the $n \times n$ matrix obtained by deleting the first row and column from $G'_{\xi}(D).$ In other words, $G_\xi(D) =(g_{ij})_{i,j=1,\ldots, n}.$  
\end{definition}

The Goeritz matrix $G_\xi(D)$ is not an invariant of the link; it depends on the diagram $D$, the checkerboard coloring $\xi$, and the order of the white regions.  However, Im, Lee and Lee used this approach to define combinatorial invariants for non-split links in thickened surfaces and virtual links (cf.~\cite{Im-Lee-Lee-2010}).

Assume that $D$ is a link diagram which is checkerboard colored and connected. Define the signature, determinant, and nullity by setting
\begin{equation} \label{eqn:check-signature}
\begin{split}
\si_\xi(D) &= \sig(G_\xi(D))-\mu_\xi(D), \\
{\textstyle \det_\xi}(D)&=|\det(G_\xi(D))|, \\ 
n_\xi(D) &=\nullity(G_\xi(D)).
\end{split}
\end{equation} 
By \cite[Theorem 5.2]{Im-Lee-Lee-2010}, it follows that $\si_\xi(D), \det_\xi(D)$ and $n_\xi(D)$ give well-defined invariants of the associated link $L \subset \Si \times I$ depending only on the choice of checkerboard coloring $\xi$. (Note that our definition of the nullity $n_\xi(D)$  differs slightly from that in \cite{Im-Lee-Lee-2010}, where they define it to be equal to $\nullity(G_\xi(D))+1.$)

In general, one will get pairs of invariants. The resulting quantities are not generally invariant under stabilization. To get invariants of virtual links, one must be careful to always represent them by minimal genus diagrams.

\begin{example} Figure \ref{3-7-a} shows a checkerboard colorable knot $K$ in the thickened torus, and it admits two checkerboard colorings $\xi$ and $\xi^*$. For $\xi$, there is only one white region $X_0$, so $G'_{\xi}(D)=[0]$ and $G_{\xi}(D)$ is the empty matrix. Further, two of the crossings have type II, and $\eta(c)=-1$. Thus $\si_{\xi}(K) = \sig(G_{\xi}(D))-\mu_{\xi}(D)=0-(-2)=2$, $\det_\xi(D)=1$, and $n_\xi(K)=0$ (cf., Remark \ref{rem:convention}).

For $\xi^*$, there are two white regions $X_0,X_1$ and we compute that
$$G'_{\xi^*}(D) = \begin{bmatrix} 2 & -2 \\ -2 & 2 \end{bmatrix} \;
\text{ and } \; G_{\xi^*}(D)=\begin{bmatrix} 2 \end{bmatrix}.$$
Further, one crossing has type II, and $\eta(c) = 1.$ Thus $\si_{\xi^*}(K)=\sig(G_{\xi^*}(D))-\mu_{\xi^*}(D)=1-1=0$, $\det_{\xi^*}(D)=2$, and $n_{\xi^*}(K)=0$.
\hfill $\Diamond$ \end{example}

For checkerboard colorable virtual knots up to six crossings,  computations of the Goeritz matrices, signatures, determinants, and nullities  are available at \cite{chrisman-table}. 


\subsection{Chromatic duality}

In this subsection, we show that the signature, determinant, and nullity invariants of Section \ref{subsec:sign-GL} are equivalent to the invariants  defined in Equation \eqref{eqn:check-signature} in Section \ref{subsec:goeritz}. The first family of invariants is defined geometrically in terms of the Gordon-Litherland pairing, and the second is defined combinatorially in terms of the Goeritz matrices (cf.~\cite{Im-Lee-Lee-2010}). The correspondence between the two families of invariants is a consequence of Theorem \ref{thm:chrom-dual}, and an important aspect of the correspondence is the principle of \textit{chromatic duality}. This principle stipulates that the colorings switch from black to white or vice versa in passing from one family of invariants to the other. At first glance, this may appear to be the result of incompatible conventions, but further examination reveals that it is an intrinsic feature stemming from Theorem \ref{thm:chrom-dual}.    

Let $g =g(\Si)$ be the  genus of the surface $\Si$, and assume that the link diagram $D$ is cellularly embedded in $\Si.$ Consequently,  the inclusion map $i \co D \to \Si$ induces a surjection $i_*\co H_1(D;\ZZ) \to H_1(\Si;\ZZ).$  If $F_\xi$ is a checkerboard surface for $L$, then this implies that the map $H_1(F_\xi;\ZZ) \to H_1(\Si \times I;\ZZ)$ must also be surjective. Thus, one can find curves $\{\al_1,\ldots, \al_{n}; \ga_1,\ldots, \ga_{2g}\}$ in $F_\xi$ giving a basis for $H_1(F_\xi;\ZZ)$  such that  $\al_1,\ldots, \al_{n}$ lie in the kernel of $H_1(F_\xi;\ZZ)\to H_1(\Si;\ZZ)$ and $\ga_1,\ldots, \ga_{2g}$ map to a set of generators for $H_1(\Si;\ZZ)$.
 
\begin{lemma} \label{lemma:basis-conv}
Suppose $D$ is a checkerboard colorable link diagram on $\Si$ with coloring $\xi$ and associated checkerboard surface $F_\xi$. Then there is a basis $\{ \al_1,\ldots, \al_{n}; \ga_1,\ldots, \ga_{2g} \}$ for $H_1(F_\xi;\ZZ)$, such that $\al_1,\ldots, \al_{n}$ lie in the kernel of $H_1(F;\ZZ)\to H_1(\Si;\ZZ)$, and the matrix representative of the pairing $\cG_{F_\xi} \co H_1(F_\xi;\ZZ) \times H_1(F_\xi;\ZZ) \to \ZZ$ on the subset $\{\al_1,\ldots, \al_{n}\}$ is the Goeritz matrix $G_{\xi}(D) = (g_{ij})$ of Definition \ref{GL-pairing}.
\end{lemma}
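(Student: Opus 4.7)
The plan is to construct the $\al_i$ as boundary loops of the white regions in $F_\xi$, and to identify the Gordon-Litherland pairing with the Goeritz matrix via local linking computations at each crossing, in the spirit of \cite{GL-1978}. For $i = 0, 1, \ldots, n$, let $\al_i$ be the cycle in $F_\xi$ obtained by traversing once around the boundary of the white disk $X_i$, running through the adjacent black disks and half-twisted bands. Orient the $\al_k$ coherently as boundaries of the white regions viewed from a fixed side of $\Si$. Each half-twisted band of $F_\xi$ separates exactly two white regions and is therefore traversed by exactly two of the $\al_k$'s with opposite orientations, giving the relation
\begin{equation*}
\sum_{k=0}^{n} \al_k = 0 \quad \text{in } H_1(F_\xi;\ZZ).
\end{equation*}
Since $\al_i$ projects to a loop in $\Si$ that bounds the disk $X_i$, its class in $H_1(\Si;\ZZ)$ vanishes, so $\al_i$ lies in the kernel of $H_1(F_\xi;\ZZ) \to H_1(\Si;\ZZ)$.

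I would next verify that $\al_1, \ldots, \al_n$ extend to a basis. Since $F_\xi$ deformation retracts onto the Tait graph $\Ga_\xi$, which is connected with $f_B$ vertices and $v$ edges when $D$ is connected and cellularly embedded, an Euler-characteristic computation yields $b_1(F_\xi) = v - f_B + 1 = n + 2g$. All complementary regions of $\Ga_\xi$ in $\Si$ are disks, so the map $H_1(F_\xi;\ZZ) \to H_1(\Si;\ZZ)$ is surjective, its kernel has rank exactly $n$, and the single relation $\sum_k \al_k = 0$ shows that $\al_1, \ldots, \al_n$ freely generate this kernel. Appending any $\ga_1, \ldots, \ga_{2g}$ in $F_\xi$ projecting to a basis of $H_1(\Si;\ZZ)$ then completes the basis.

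To evaluate $\cG_{F_\xi}(\al_i, \al_j)$ via Equation \eqref{defn:GL}, the intersection correction $p_*(\al_i) \cdot p_*(\al_j)$ vanishes, leaving $\cG_{F_\xi}(\al_i, \al_j) = \lk_\Si(\tau \al_i, \al_j)$. For $i \neq j$, this relative linking decomposes as a local sum over the crossings $c$ that border both $X_i$ and $X_j$: at each such crossing, the transfer $\tau \al_i$ has a component running along the pushed-off copies of the half-twisted band, while $\al_j$ runs along the opposite side of the same band, and the standard local model of \cite{GL-1978} identifies the contribution as $-\eta(c)$. Summing gives $\cG_{F_\xi}(\al_i, \al_j) = -\sum_c \eta(c) = g_{ij}$. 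For the diagonal, rather than computing the self-linking directly, I would use the relation $\sum_k \al_k = 0$ to get
\begin{equation*}
\cG_{F_\xi}(\al_i, \al_i) \;=\; -\sum_{\substack{k = 0 \\ k \neq i}}^{n} \cG_{F_\xi}(\al_i, \al_k) \;=\; -\sum_{k \neq i} g_{ik} \;=\; g_{ii},
\end{equation*}
matching the definition of $g_{ii}$ in Definition \ref{GL-pairing}.

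The principal technical hurdle is the sign-bookkeeping in the local linking computation at a shared crossing: one must reconcile the orientation of the transfer $\tau \al_i$ (determined by the orientable double cover $\wt{F_\xi} \to F_\xi$), the incidence convention $\eta(c) = \pm 1$ from Figure \ref{crossing-eta-type}, and the relative linking convention of Section \ref{sec:rel-link} (using relative 2-chains toward $\Si \times \{1\}$). The local model is essentially that of the classical setting, and the main point to verify is that the move from $S^3$ to $\Si \times I$ does not introduce an extra sign from the relative linking convention.
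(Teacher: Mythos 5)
Your proposal follows essentially the same route as the paper's proof: both take the classes $\al_i$ to be boundary loops of the white regions (the paper uses the boundaries $\partial Y_i$ of the complementary regions of the Tait graph $\Ga_\xi$, which deformation retract to what you describe), both observe the single relation $\sum_{i=0}^n \al_i = 0$ and use it to reduce the verification to the off-diagonal entries $i\neq j$, both note that $p_*(\al_i)\cdot p_*(\al_j)=0$ so the pairing reduces to a relative linking number, and both evaluate that linking number as a sum of local contributions at the crossings shared by $X_i$ and $X_j$. You add an explicit Euler-characteristic count to justify that $\{\al_1,\ldots,\al_n;\ga_1,\ldots,\ga_{2g}\}$ is a basis, which the paper leaves implicit; this is a reasonable addition. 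The one point to watch is the sign of the local contribution: you write $-\eta(c)$ per shared crossing, whereas the paper's text (referring to its Figure \ref{crossing-eta-2}) records the local intersection number as $\eta(c)$ — those two conventions differ by the orientation chosen for the $\al_i$ (in the paper each $Y_i$ is oriented coherently with $\Si$, so adjacent boundaries traverse a shared band with opposite orientations). You flagged this bookkeeping as the principal technical hurdle, and indeed it is the only place the argument requires care; once the orientation of $\al_i$, the transfer $\tau$, and the top-relative linking convention of Section \ref{sec:rel-link} are fixed consistently, the local model at a crossing does yield $g_{ij}=-\sum_c\eta(c)$, matching Definition \ref{GL-pairing}.
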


\begin{proof}
Let $\Ga_\xi$  be the Tait graph of $F_\xi$, this is the graph in $F_\xi \cap (\Si \times \{1/2\})$ with one vertex for each disk and one edge for each band. Notice that $\Ga_\xi$ is a deformation retract of $F_\xi$, alternatively $\Ga_\xi$ is the deformation retract of $\Si$ after removal of all the white disks. Let $\ga_1,\ldots, \ga_{2g}$ be a set of standard generators for $H_1(\Si;\ZZ)$, and  label the regions of $\Si \sm \Ga_\xi$ as $Y_0,Y_1,\ldots, Y_n$, so that $Y_i$ contains $X_i$ for $i=0,\ldots, n$. Each $Y_i$ is oriented using the orientation of $\Si$. Set $\al_{i} = [\partial Y_i],$ obtaining homology classes which, together with  $\ga_1,\ldots, \ga_{2g}$, generate $H_1(\Ga_\xi;\ZZ) \cong H_1(F_\xi;\ZZ)$ and with just one relation $\sum_{i=0}^n \al_i = 0.$

\begin{figure}[ht]
\centering
\includegraphics[scale=0.90]{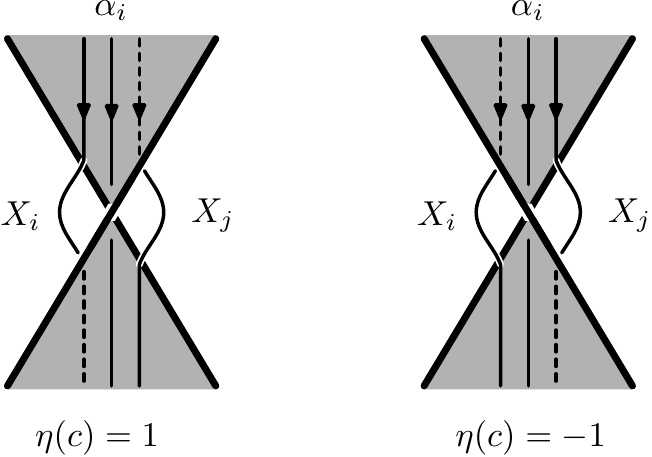} 
\caption{Double points $c$ contribute with sign $\eta(c)$.}
\label{crossing-eta-2}
\end{figure}

Since  $\sum_{i=0}^n \al_i = 0$ and  $\sum_{j=0}^n g_{ij} = 0$, it is enough to show that 
$$\cG_{F_\xi}(\al_i,\al_j) = g_{ij} \text{ for $i \neq j$.}$$
Since $p_*(\al_i) \cdot p_*(\al_j) = 0$ for all $i, j$, we have $\cG_{F_\xi}(\al_i,\al_j) = \lk_\Si(\tau \al_i,\al_j)= \tau \al_i\cdot Y_j.$ Notice that $\tau \al_i$ intersects $Y_j$ only near double points incident to both $X_i$ and $X_j$, and each such double point $c$ contributes with sign $\eta(c)$, see Figure \ref{crossing-eta-2}.
\end{proof}

Suppose $F_\xi$ is a checkerboard surface associated to a checkerboard coloring $\xi$, and let $F' = F_\xi \#_\tau \Si$ be the spanning surface obtained from attaching a parallel copy of the Carter surface near $\Si \times \{0\}$ to $F_\xi$ by a small thin tube $\tau$ (see Section \ref{section:S-star-equivalence}). 

Notice that the rank of $H_1(F';\ZZ)$ is equal to the rank of $H_1(F_\xi;\ZZ) + 2g.$ Proposition \ref{prop-S*} implies that  $F'$ is $S^*$-equivalent to the chromatic dual $F_{\xi^*}$. The next result relates the signature, determinant, and nullity invariants coming from the Gordon-Litherland pairing ${\cG}_{F'}$ (see Equations \eqref{defn:signature} and \eqref{defn:det}) to those defined in Equation \eqref{eqn:check-signature} in terms of the Goeritz matrices for the $\xi$ coloring. It is an immediate consequence of Theorem \ref{thm:chrom-dual} and Lemma \ref{lemma:basis-conv}.

\begin{theorem}\label{thm:chromatic} Given a checkerboard colorable diagram $D$ with coloring $\xi$ and $F' = F_\xi \#_\tau \Si$ as above, the signature, determinant, and nullity invariants of the Gordon-Litherland pairing ${\cG}_{F'}$  are equal to those defined using the Goeritz matrices of its chromatic dual. In particular, we have
$$\si(D,F') = \si_{\xi}(D), \; \det(D,F')= {\textstyle \det_{\xi}}(D), \text{ and } n(D,F') = n_{\xi}(D).$$
\end{theorem}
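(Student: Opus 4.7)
The plan is to combine Theorem \ref{thm:chrom-dual} with Lemma \ref{lemma:basis-conv} to pass from $\cG_{F'}$ to the Goeritz matrix $G_\xi(D)$, and then use Lemma \ref{lemma:correction} to reconcile the Euler-number correction term with the type-II crossing sum $\mu_\xi(D)$. There is essentially no obstacle; the statement is a bookkeeping consequence of results already proved.

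First I would verify that Theorem \ref{thm:chrom-dual} applies with $F = F_\xi$. Since $D$ is checkerboard colored and cellularly embedded, the Tait graph $\Ga_\xi$ is connected, so $F_\xi$ is a connected spanning surface. The cellular embedding of $D$ forces the inclusion $D \hookrightarrow \Si$ to induce a surjection on first homology, and as noted in the paragraph preceding Lemma \ref{lemma:basis-conv}, this implies that $H_1(F_\xi;\ZZ) \to H_1(\Si \times I;\ZZ)$ is surjective. Applying Theorem \ref{thm:chrom-dual} with $F = F_\xi$ and $F' = F_\xi \#_\tau \Si$ yields
$$\sig(\cG_{F'}) = \sig(\cG_{F_\xi}|_{\cK_{F_\xi}}), \quad |\det(\cG_{F'})| = |\det(\cG_{F_\xi}|_{\cK_{F_\xi}})|, \quad \nullity(\cG_{F'}) = \nullity(\cG_{F_\xi}|_{\cK_{F_\xi}}).$$

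Next I would invoke Lemma \ref{lemma:basis-conv} to identify the restricted pairing with the Goeritz matrix. The lemma supplies a basis $\{\al_1, \ldots, \al_n;\, \ga_1, \ldots, \ga_{2g}\}$ of $H_1(F_\xi;\ZZ)$ whose first $n$ elements generate $\cK_{F_\xi}$ and in which the matrix of $\cG_{F_\xi}|_{\cK_{F_\xi}}$ is precisely $G_\xi(D)$. Combining with the previous display gives the pairing-level equalities
$$\sig(\cG_{F'}) = \sig(G_\xi(D)), \quad |\det(\cG_{F'})| = |\det(G_\xi(D))|, \quad \nullity(\cG_{F'}) = \nullity(G_\xi(D)).$$
These already produce the determinant and nullity assertions of the theorem on comparing with the definitions in Equation \eqref{defn:det} and Equation \eqref{eqn:check-signature}.

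Finally I would handle the signature by tracking the correction terms. From Equation \eqref{defn:signature}, $\si(D,F') = \sig(\cG_{F'}) + \tfrac12 e(F',L)$, while Equation \eqref{eqn:check-signature} gives $\si_\xi(D) = \sig(G_\xi(D)) - \mu_\xi(D)$. So it suffices to show $e(F',L) = -2\mu_\xi(D)$. Because $F'$ differs from $F_\xi$ only by the tube $\tau$ attached to a parallel copy of $\Si$ placed near $\Si \times \{0\}$ and disjoint from $L$, the push-off linking data that determines $e(\,\cdot\,, L)$ is identical for the two surfaces; hence $e(F',L) = e(F_\xi,L)$. Lemma \ref{lemma:correction} then yields $e(F_\xi,L) = -2\mu_\xi(D)$, and combining,
$$\si(D,F') \;=\; \sig(\cG_{F'}) + \tfrac12 e(F',L) \;=\; \sig(G_\xi(D)) - \mu_\xi(D) \;=\; \si_\xi(D),$$
completing the argument.
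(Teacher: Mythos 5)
Your proof is correct and follows exactly the route the paper intends: the paper presents Theorem \ref{thm:chromatic} as an ``immediate consequence of Theorem \ref{thm:chrom-dual} and Lemma \ref{lemma:basis-conv}'' without writing out the details, and you have simply filled in that bookkeeping, including the reconciliation of the correction terms via Lemma \ref{lemma:correction} and the observation (stated in the paper just before Theorem \ref{thm:chrom-dual}) that $e(F',L)=e(F_\xi,L)$ since $F'$ and $F_\xi$ have the same local behaviour near $L$.
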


Switching the roles of the checkerboard surfaces, it follows that
$$\sigma(D,F_\xi) = \sigma_{\xi^*}(D), \; \det(D,F_\xi) = {\textstyle \det_{\xi^*}}(D), \text{ and } n(D,F_\xi) = n_{\xi^*}(D).$$
This again is the principle of \textit{chromatic duality}.

\subsection{Crossing change}
In this subsection, we study the effect on the signature $\si_\xi(L)$ of changing a crossing of a link $L$ in a thickened surface.

A well-known result for classical knots implies that, under crossing change,  the signature changes by at most two \cite{Murasugi-1965}. In that same paper,  Murasugi studied the relationship between the signatures and nullities for links related by smoothing a crossing. The following result gives a generalization for checkerboard colorable links in thickened surfaces. 

The idea of the proof is similar to Murasugi's original argument. It involves applying Equation \eqref{sig-formula} to analyze how the signature of a checkerboard colored link changes under a crossing change. Notice that the checkerboard coloring depends only on its projection $p(L) \subset \Si$, where $p \colon \Si \times I \to \Si$. So we can use the same coloring for links related by a crossing change. 

\begin{theorem} \label{thm:cc}
Let $D_{+}$ and $D_-$ be two checkerboard colorable link diagrams on a surface $\Si$ which are identical everywhere except at one crossing, which is positive for $D_+$ and negative for $D_-.$ If $\xi$ is a checkerboard coloring for $D_+$ (and $D_-$), then the signatures satisfy
$$\si_\xi(D_+) \leq \si_\xi(D_-) \leq \si_\xi(D_+)+2.$$
Indeed, there are two cases, according to the nullities $n_\xi(D_\pm)$. 
\begin{enumerate}
\item[(i)] If $n_\xi(D_+)=n_\xi(D_-)$, then either $\si_\xi(D_-) = \si_\xi(D_+)$ or $\si_\xi(D_-)=\si_\xi(D_+)+2.$
\item[(ii)] If $n_\xi(D_+) \neq n_\xi(D_-)$, then $\si_\xi(D_-) = \si_\xi(D_+)+1.$
\end{enumerate}
\end{theorem}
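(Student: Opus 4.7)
The plan is to compare $G_\xi(D_+)$ and $G_\xi(D_-)$ directly, exploiting the fact that the checkerboard coloring $\xi$ depends only on the projection $p(L)\subset \Si$, so both diagrams share the same white regions $X_0,\ldots,X_n$, the same Tait graph outside a neighborhood of the changed crossing $c$, and Goeritz matrices of the same size. Let $X_i$ and $X_j$ denote the two white regions meeting $c$ (possibly with $j=0$, or with $i=j$ in a degenerate case).

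First I would show that the difference $M := G_\xi(D_-)-G_\xi(D_+)$ is a symmetric matrix of rank at most one, supported on the rows and columns indexed by $\{i,j\}\setminus\{0\}$. The only entry of $G_\xi$ whose contribution changes is that of $c$ itself, and since $\eta(c)$ flips sign under a crossing change, that contribution changes by $\pm 2$ in $g_{ij}$ and $\mp 2$ in each of $g_{ii}$ and $g_{jj}$. Writing $v=e_i-e_j$ (or $v=e_i$ if $X_j=X_0$), one obtains $M = \ep\cdot 2\,vv^\tr$ with $\ep=-\eta_+(c)$, a rank-one matrix of definite sign. (The degenerate case $X_i=X_j$ gives $M=0$ and is handled separately using only the $\mu_\xi$ change.)

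Next I would track the correction term by inspecting the four local pictures in Figure \ref{crossing-eta-type}. Under a crossing change, $c$ swaps type (I $\leftrightarrow$ II) and $\eta(c)$ flips, so the contribution of $c$ to $\mu_\xi$ changes from $0$ to $\pm 1$ or vice versa. A direct case check should give $\mu_\xi(D_-)-\mu_\xi(D_+)=-\ep$, so that
\[
\si_\xi(D_-)-\si_\xi(D_+) \;=\; \bigl[\sig(G_\xi(D_-))-\sig(G_\xi(D_+))\bigr]+\ep.
\]
Finally I would apply Equation \eqref{sig-formula} to a chain of principal minors of $G_\xi(D_+)$ chosen so that the indices in $\{i,j\}\setminus\{0\}$ appear last. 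On the earlier portion of the chain, the minors coincide with those of $G_\xi(D_-)$, while the last one or two change by the rank-one matrix $M$; evaluating these final determinants via the matrix-determinant lemma recovers the familiar Cauchy-interlacing behavior for rank-one perturbations. Namely, a positive-semidefinite perturbation shifts $\sig$ by $0$, $1$, or $2$ and $\nullity$ by $-1$, $0$, or $+1$, with $\sig$ changing by exactly $1$ when the $\nullity$ changes and by $0$ or $2$ when the $\nullity$ is preserved; the analogous statement holds for a negative perturbation. Combining this with the $\ep$ shift from $\mu_\xi$ yields exactly the dichotomy of the theorem.

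The main obstacle is the sign compatibility $\mu_\xi(D_-)-\mu_\xi(D_+)=-\ep$: this is the one place where the conventions of Figure \ref{crossing-eta-type} enter explicitly, and it is what forces the combined change to land in $\{0,1,2\}$ rather than $\{-1,0,1\}$. Verifying it requires a careful case-by-case inspection of the four local configurations (positive vs.\ negative crossing against the two choices of coloring), but once in hand the rest follows as a standard linear-algebra consequence of rank-one interlacing together with the chain-of-minors computation.
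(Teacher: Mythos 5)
Your overall strategy matches the paper's: encode the crossing change as a rank-one perturbation of the Goeritz matrix and track it together with the correction term $\mu_\xi$, then use a chain of principal minors (Equation \eqref{sig-formula}) to read off how the signature and nullity move. But there is a genuine error in the one step you flagged as the main obstacle, and unfortunately it breaks the final bookkeeping.

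You assert that under a crossing change the crossing ``swaps type (I $\leftrightarrow$ II) and $\eta(c)$ flips.'' This is not correct. The incidence number $\eta(c)$ depends on the coloring and on which strand is over, so it does flip; but the type I/II of a crossing is determined by the orientation and the coloring only, and is unchanged by exchanging over- and under-strand. The paper's proof uses exactly this: when $\eta(c_+)=-1$, both $c_+$ and $c_-$ are type I (so $\mu_\xi$ is unchanged); when $\eta(c_+)=+1$, both are type II (so $\mu_\xi$ decreases by $2$). In your notation with $\ep=-\eta(c_+)$, the correct change is $\mu_\xi(D_-)-\mu_\xi(D_+)=\ep-1\in\{0,-2\}$, not $-\ep\in\{\mp 1\}$.

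This is not a cosmetic sign issue; your asserted identity is inconsistent with the theorem you are proving. Your Goeritz-matrix computation $M=2\ep\,vv^\tr$ is correct, so for $\ep=1$ the perturbation is positive semidefinite and $\sig(G_\xi(D_-))-\sig(G_\xi(D_+))\in\{0,1,2\}$; combined with your claimed $\mu$-shift of $-\ep=-1$ you would get $\si_\xi(D_-)-\si_\xi(D_+)\in\{1,2,3\}$. For $\ep=-1$ the analogous computation gives $\{-3,-2,-1\}$. Neither lands in $\{0,1,2\}$. With the correct $\mu$-shift of $\ep-1$ the two contributions genuinely cancel, giving $\{0,1,2\}$ in both cases. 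So before the linear-algebra interlacing step, which is fine, you need to redo the case check in Figure \ref{crossing-eta-type}: the correct statement is that a crossing change preserves the type and flips $\eta$, and only type II crossings contribute to $\mu_\xi$.

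Two small additional notes. The paper implicitly reorders the white regions so that $X_0$ is adjacent to the changed crossing, reducing to $v=e_i$ and a change only in the $(1,1)$ entry; your more general $v=e_i-e_j$ also works, and handling the degenerate case $X_i=X_j$ separately is reasonable. Once the $\mu_\xi$ bookkeeping is repaired, your rank-one interlacing argument (that a definite rank-one perturbation shifts $\sig$ by $1$ exactly when the nullity changes, and by $0$ or $2$ otherwise) gives the dichotomy just as the chain-of-minors argument in the paper does.
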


\begin{proof}
Let $c_+$ denote the distinguished crossing in $D_{+}$, and $c_-$ the corresponding crossing of $D_-$. So $c_+$ is a positive crossing and $c_-$ is negative. There are two cases according to the value of $\eta(c_+)  \in\{ \pm 1\}.$  The proofs for the two cases are similar, so we give the first and leave the second to the reader. 

Therefore assume that $\eta(c_+) = - 1$. Then $\eta(c_-)=1$, and $c_+$ and $c_-$ are both type I crossings. Further, the correction terms satisfy $\mu_\xi(D_+)= \mu_\xi(D_-)$ and the Goeritz matrices are related as follows:

$$G_{\xi}(D_{+})=\left[\begin{matrix}
x&U\\ U^{\tr}& *
\end{matrix}\right] \; \text{ and } \;
G_{\xi}(D_{-})=\left[\begin{matrix}
x+2&U\\ U^{\tr}& *
\end{matrix}\right].$$  

There are two cases according to the nullities $n_\xi(D_\pm)$.

\smallskip
\noindent
{\it Case I:} $n_{\xi}(D_{+})=n_{\xi}(D_{-})$. Then the rank of $G_{\xi}(D_{+})$ is equal to the rank of $G_{\xi}(D_{-})$, and we can choose chains of principal minors $M_i^\pm$ for $G_{\xi}(D_\pm)$ as in Equation \eqref{sig-formula} so that $M_i^+ = M_i^-$ for $i=0,\ldots, r-1$. If the submatrix for the $r$-th minor $M^\pm_{r}$ does not contain the upper left hand entry (which is $x$ for $G_{\xi}(D_{+})$ and $x+2$ for $G_{\xi}(D_{-})$), then $M^+_r = M^-_r$, and Equation \eqref{sig-formula} implies that $\sig(G_{\xi}(D_{+}))=\sig(G_{\xi}(D_{-}))$. Hence $\si_{\xi}(D_{+})=\si_{\xi}(D_{-})$.

Otherwise, if $M^+_{r}$ and $M^-_r$ have the same sign, then Equation \eqref{sig-formula} again implies that $\sig(G_{\xi}(D_{+}))=\sig(G_{\xi}(D_{-}))$, and $\si_{\xi}(D_{+})=\si_{\xi}(D_{-})$.  On the other hand, if $M^+_{r}$ and $M^-_r$ have opposite signs, then since the upper left hand entry of $G_{\xi}(D_{-})$ is $2$ more than the corresponding entry for $G_{\xi}(D_{+})$, Equation \eqref{sig-formula} implies that $\sig(G_{\xi}(D_{-}))=\sig(G_{\xi}(D_{+}))+2$, so $\si_{\xi}(D_{-})=\si_{\xi}(D_{+})+2$.

\smallskip
\noindent
{\it Case  II:} $n_{\xi}(D_{+})\neq n_{\xi}(D_{-})$. Then $|n_{\xi}(D_{+})- n_{\xi}(D_{-})|=1$.  Suppose firstly that $G_{\xi}(D_{-})$ has rank $r+1$ and $G_{\xi}(D_{+})$ has rank $r.$ We can choose chains of principal minors $M_i^\pm$ for $G_{\xi}(D_\pm)$ as in Equation \eqref{sig-formula} so that $M_i^+ = M_i^-$ for $i=0,\ldots, r$. Notice that since $G_{\xi}(D_{-})$ is the matrix with larger rank, $M^-_{r+1}$ will contain the upper left hand entry of $G_{\xi}(D_{-})$. Also, $\det G_{\xi}(D_{+}) =0$, and $M^-_{r}M^-_{r+1}$ will be larger than $M^+_{r}M^+_{r+1}=0$. As a result $\sig(G_{\xi}(D_{-}))=\sig(G_{\xi}(D_{+}))+1$, so $\si_{\xi}(D_{-})=\si_{\xi}(D_{+})+1$. The same formula can be established in the case when  $G_{\xi}(D_{+})$ has rank $r+1$ and $G_{\xi}(D_{-})$ has rank $r$ using a similar argument.
\end{proof}
 
\subsection{Mirror Images}

In this subsection, we will relate the signature, determinant, and nullity invariants of a checkerboard colorable link in a thickened surface to those of its mirror images.  

For links in thickened surfaces, there are two ways to take the mirror image, one is called the \textit{vertical} mirror image and the other is called the \textit{horizontal} mirror image. This terminology is consistent with the terminology commonly used for mirror images of virtual links, see \cite{green}.

\begin{definition} Let $L \subset \Si \times I$ be an oriented link in a thickened surface. \begin{itemize}
\item[(i)] Consider the orientation reversing homeomorphism $\varphi \co \Si \times I \to \Si \times I$ given by $\varphi(x,t) =(x,1-t)$. The image of $L$ under $\varphi$ is called the \textit{vertical mirror image} of $L$ and is denoted $L^{*}$. A diagram of $L^{*}$ is obtained  from a diagram of $L$ by switching all the crossings.

\item[(ii)] Let $f\co \Si \to \Si$ be an orientation reversing homeomorphism and set $\psi\co \Si \times I \to \Si \times I$ to be an orientation reversing homeomorphism given by $\psi(x,t) =(f(x),t)$. The image of $L$ under $\psi$ is called the \textit{horizontal mirror image} of $L$ and is denoted $L^{\dag}$. A Gauss diagram of $L^{\dag}$ is obtained by changing the sign on every arrow in a Gauss diagram of $L$.
\end{itemize}
\end{definition} 
 
Let $L\subset \Si \times I$ be a link with spanning surface $F\subset \Si \times I$, and let $F_\varphi$ and $F_\psi$ be the surfaces obtained by taking the images of $F$ under the maps $\varphi$ and $\psi,$ respectively. Then $F_\varphi$ is a spanning surface for $L^*$ and $F_\psi$ is a spanning surface for $L^\dag.$

The next result relates the signatures, determinants, and nullities of $(L^*,F_\varphi)$ and $(L^\dag, F_\psi)$ to those for $(L,F)$. The proof is standard, and we provide it for the reader's convenience. For an alternative approach, see  \cite[Theorem 2.10]{Karimi-kh}.

\begin{proposition}\label{prop:mirror_image}
Let $L\subset \Si \times I$ be a  link with spanning surface $F \subset \Si \times I$. Then the signature, determinant, and nullity of the vertical and horizontal mirror images of $L$ satisfy
\begin{eqnarray*}
\si(L^*, F_\varphi)=-\si(L,F)  & \text{and} & \si(L^{\dag},F_\psi)=-\si(L,F), \\ 
\det(L^*,F_\varphi)=\det(L,F)  &\text{and}& \det(L^{\dag},F_\psi)=\det(L,F), \\
n(L^*,F_\varphi)=n(L,F)  &\text{and}& n(L^{\dag},F_\psi)=n(L,F).
\end{eqnarray*}

\end{proposition}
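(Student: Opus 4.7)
The plan is to reduce the six identities to two matrix-level assertions, namely
\[
\mathcal{G}_{F_\varphi}(\varphi_*\alpha,\varphi_*\beta) = -\mathcal{G}_F(\alpha,\beta), \qquad \mathcal{G}_{F_\psi}(\psi_*\alpha,\psi_*\beta) = -\mathcal{G}_F(\alpha,\beta),
\]
paired with $e(F_\varphi,L^*)=e(F_\psi,L^\dag)=-e(F,L)$. Since $\varphi_*$ and $\psi_*$ are isomorphisms on $H_1$, these identities say that the matrices representing $\mathcal{G}_{F_\varphi}$ and $\mathcal{G}_{F_\psi}$ are obtained from the matrix of $\mathcal{G}_F$ by multiplication by $-1$. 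The proposition then falls out of the elementary facts $\sig(-M)=-\sig(M)$, $|\det(-M)|=|\det(M)|$, and $\nullity(-M)=\nullity(M)$, combined with the defining formula $\si(L,F)=\sig(\mathcal{G}_F)+\tfrac12 e(F,L)$.

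For the vertical mirror $\varphi(x,t)=(x,1-t)$, I would first observe that $p\circ\varphi=p$, which gives $p_*(\varphi_*\alpha)\cdot p_*(\varphi_*\beta)=p_*(\alpha)\cdot p_*(\beta)$. To handle the linking term, I would take a relative $2$-chain $B$ with $\partial B = K + c$ and $c\subset\Si\times\{1\}$; then $\varphi(B)$ has boundary $\varphi(K)+\varphi(c)$ with $\varphi(c)\subset\Si\times\{0\}$, and since $\varphi$ reverses the orientation of $\Si\times I$ this chain computes $\underline{\lk}_\Si(\varphi J,\varphi K) = -\overline{\lk}_\Si(J,K)$. Rearranging via $\underline{\lk}_\Si(X,Y)=\overline{\lk}_\Si(Y,X)$ yields $\lk_\Si(\varphi J,\varphi K) = -\lk_\Si(K,J)$. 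Naturality of the transfer ($\tau\circ\varphi_*=\wt\varphi_*\circ\tau$) then combines with \eqref{eq:linking}, the identity $p_*(\tau\alpha)=2\,p_*(\alpha)$ (which follows from $\pi_*\circ\tau=2\cdot\mathrm{id}$), and the antisymmetry of $\cdot$ on $\Si$ to produce $\mathcal{G}_{F_\varphi}(\varphi_*\alpha,\varphi_*\beta)=-\mathcal{G}_F(\alpha,\beta)$. The same linking relation, now applied to $L$ and a push-off $L'$ and using $p_*[L]\cdot p_*[L']=0$ (since $p_*[L]=p_*[L']$), delivers $e(F_\varphi,L^*)=-e(F,L)$.

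For the horizontal mirror $\psi(x,t)=(f(x),t)$, the sign appears in the base rather than in the fibre. Since $p\circ\psi=f\circ p$ and $f$ reverses the orientation of $\Si$, we have $p_*(\psi_*\alpha)\cdot p_*(\psi_*\beta)=-p_*(\alpha)\cdot p_*(\beta)$. The same relative $2$-chain argument, but now with $\psi(c)\subset\Si\times\{1\}$, gives $\lk_\Si(\psi J,\psi K)=-\lk_\Si(J,K)$; no swap of arguments is needed because $\psi$ preserves the $I$-coordinate. Substituting these two sign changes directly into the definition of $\mathcal{G}$ produces $\mathcal{G}_{F_\psi}(\psi_*\alpha,\psi_*\beta)=-\mathcal{G}_F(\alpha,\beta)$, and the same linking formula gives $e(F_\psi,L^\dag)=-e(F,L)$.

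The main obstacle will be the sign bookkeeping in the vertical case, where the cancellation hinges on combining three ingredients: the swap-and-negate relation for relative linking, the asymmetry formula \eqref{eq:linking}, and the doubling $p_*\tau=2\,p_*$. Once that one computation is carried out, the horizontal case is an easier variant in which the two sign changes (one in the linking and one in the intersection on $\Si$) align without further manipulation, and the rest of the proposition is immediate from the matrix-level negation together with the definition of $\si(L,F)$.
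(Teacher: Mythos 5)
Your proposal is correct and follows the same overall strategy as the paper's proof: show $\cG_{F_\varphi} = -\cG_F$ and $\cG_{F_\psi} = -\cG_F$, show $e(F_\varphi,L^*)=e(F_\psi,L^\dag)=-e(F,L)$, and then read off the three identities from $\sig(-M)=-\sig(M)$, $|\det(-M)|=|\det(M)|$, $\nullity(-M)=\nullity(M)$, and the defining formula for $\si(L,F)$.

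Where you improve on the paper is precisely the point you flag as ``the main obstacle.'' In the vertical case the paper simply asserts $\lk_\Si(\varphi(\tau\al),\varphi(\be))=-\lk_\Si(\tau\al,\be)$ from orientation reversal and then substitutes into the definition of $\cG$. Strictly speaking, since $\varphi$ swaps $\Si\times\{0\}$ and $\Si\times\{1\}$, the relative $2$-chain argument gives $\lk_\Si(\varphi J,\varphi K)=-\underline{\lk}_\Si(J,K)=-\lk_\Si(K,J)$, which differs from $-\lk_\Si(J,K)$ by the term $p_*(J)\cdot p_*(K)$ of Equation~\eqref{eq:linking}. Your proof correctly identifies this: you need the swap-and-negate relation \emph{together with} \eqref{eq:linking} and $p_*\tau = 2\,p_*$ before the intersection term (which here picks up no sign, since $p\circ\varphi=p$) combines with the extra contribution to yield $\cG_{F_\varphi}=-\cG_F$. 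The paper's two implicit sign slips cancel, so its conclusion is right, but your accounting makes the cancellation visible and verifiable. Your horizontal case, where the sign comes from $f$ on the base and no argument swap occurs, matches the paper's (outsourced as ``a similar argument'') and is straightforward. No gaps.
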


\begin{proof}
Let $\{\al_1,\dots,\al_n\}$ be a basis for $H_1(F;\ZZ)$, then $\{\varphi(\al_1), \dots,\varphi(\al_n)\}$ is a basis for $H_1(F_\varphi;\ZZ)$. We compute that 
$$\lk_\Si(\tau \varphi(\al),\varphi(\be)) = \lk_\Si(\varphi(\tau\al),\varphi(\be))=-\lk_\Si(\tau\al,\be).$$
The second step results from the fact that $\varphi$ is an orientation reversing homeomorphism. It follows that $\cG_{L^*,F_\varphi} = - \cG_{L,F},$ and that $\sig(\cG_{L^*,F_\varphi}) = - \sig(\cG_{L,F}).$

On the other hand, if $L'$ is a parallel of $L$ that misses $F$, then $\varphi(L')$ is a parallel of $\varphi(L)$ that misses $F_\varphi$. Thus,  $\lk_\Si(\varphi(L),\varphi(L')) = - \lk_\Si(L,L')$, and $e(L^*,F_\varphi) = -e(L,F).$ The formulas for $\si(L^*, F_\varphi), \det(L^*,F_\varphi)$ and $n(L^*,F_\varphi)$ now follow directly. A similar argument gives the stated formulas for $\si(L^{\dag},F_\psi),\det(L^{\dag},F_\psi),$ and $n(L^{\dag},F_\psi)$.  
\end{proof}

\subsection{Almost classical links}
In this subsection, we consider almost classical links. We relate the signature, determinant and nullity invariants defined using the Gordon-Litherland pairing to the signature, determinant and nullity invariants defined via the Seifert pairing.

To begin, we show that every almost classical link admits a checkerboard colorable diagram whose checkerboard surface $F_\xi$ is oriented.

\begin{proposition}
If $L$ is an almost classical link, then it can be represented by a  diagram $D$ on a minimal genus surface $\Si$ with a checkerboard coloring $\xi$, so that every crossing has type I. Thus, the checkerboard surface $F_\xi$ is oriented.
\end{proposition}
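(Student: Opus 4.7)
The plan is to translate the $\ZZ$-homological triviality of $L$ into an integer-valued Alexander numbering on the complementary regions of its diagram, and to read off a checkerboard coloring from the parity of that numbering. Since $L$ is almost classical, it can be represented by a link $L\subset\Si\times I$ with $[L]=0$ in $H_1(\Si\times I;\ZZ)\cong H_1(\Si;\ZZ)$. Applying the Seifert-surface analogue of Theorem~\ref{thm-min}, cf.\ \cite[Theorem 6.4]{Boden-Gaudreau-Harper-2016}, I may further arrange that $\Si$ is of minimal genus and that the projection of $L$ gives a cellularly embedded diagram $D$ on $\Si$.

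Because $p_*[L]=0$ in $H_1(\Si;\ZZ)$, the projected $1$-cycle $p(L)$ bounds a $2$-chain in $\Si$, and the coefficients on the regions of $\Si\sm D$ in any such $2$-chain assemble into an Alexander numbering $\alpha$, i.e., an integer-valued labeling of the regions of $\Si\sm D$, unique up to a global shift, such that $\alpha$ jumps by $+1$ as one crosses an arc of $D$ from its right to its left. Declare a region $R$ to be \emph{black} if $\alpha(R)$ is even and \emph{white} otherwise. Since adjacent regions have Alexander numbers differing by $\pm 1$, this rule yields a valid checkerboard coloring $\xi$ of $D$.

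It remains to verify that every crossing of $D$ has type~I with respect to $\xi$, and this is the step I expect to be the main obstacle. At a crossing $c$, the four incident regions carry Alexander labels of the pattern $n, n+1, n+1, n+2$, with the two $(n+1)$-regions occupying one pair of opposite corners and the $n$- and $(n+2)$-regions occupying the other pair. Consequently the two black corners (those with even labels) sit at a pair of opposite corners determined uniformly by the orientation of $L$ and by the Alexander numbering. A direct local case analysis over the four possible crossing patterns (positive or negative crossing, each with two possible choices of over-arc), comparing the configuration just described with the convention of Figure~\ref{crossing-eta-type}, shows that every crossing is of type~I. Consequently each half-twisted band of $F_\xi$ inherits a coherent transverse orientation from the orientations of the black disks (themselves induced from the orientation of $\Si$), so $F_\xi$ is an oriented spanning surface for $L$ and the proposition follows.
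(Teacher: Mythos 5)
The idea of reading the coloring off an Alexander numbering is natural, but the key step is not just deferred---it is actually false as stated, and the argument collapses there. At a crossing with labels $\{n,n+1,n+1,n+2\}$, the two black (even-labeled) corners are the extreme pair $\{n,n+2\}$ when $n$ is even, but the middle pair $\{n+1,n+1\}$ when $n$ is odd. These are genuinely different local configurations, and as one travels along an edge from one crossing to the next, the minimum label changes by $0$ or $\pm 1$; so different crossings can (and typically do) realize both parities of $n$. The phrase ``determined uniformly by the orientation and the Alexander numbering'' therefore glosses over the very dichotomy that decides the type of the crossing, and the promised local case analysis would show that one configuration gives type~I and the other type~II, not that every crossing is type~I.

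A concrete counterexample already appears for the unknot. Take the one-crossing kink diagram on $S^2$ (cellularly embedded, minimal genus). The Alexander labels are $0$ on the outer region and $\pm 1$ on the two lobes, so your ``even $=$ black'' rule colors only the outer region black. The corresponding checkerboard surface is one disk with a single half-twisted band attached to itself, i.e.\ a M\"obius band, so that crossing is type~II. Choosing the opposite parity rescues this particular example, but for a diagram where the minimum label $n$ is even at one crossing and odd at another, \emph{neither} parity makes every crossing type~I. In short: the checkerboard surface built from the parity coloring is the Seifert surface only when the diagram is a ``special'' diagram, and a minimal-genus cellular diagram need not be special.

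This is precisely the point where the paper's proof does more work: it does not use whatever cellular diagram is handed to it, but rather isotopes a Seifert surface for $L$ into disk-band form, projects to get a \emph{new} diagram (generally with more crossings), observes that at a band crossing the four resulting crossings all have the same type, and then applies a Reidemeister~2 move wherever those crossings are type~II (Figure~\ref{crossing-band}). Some modification of the diagram by Reidemeister moves is unavoidable, and your argument omits it. To repair your proof you would have to either incorporate the R2 step, or show directly (which is the same work in disguise) that the disk-band diagram obtained by projecting a Seifert surface is special, so that its Alexander-parity coloring has all crossings of type~I.
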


\begin{proof}
Since $L$ is almost classical, it can be represented as a homologically trivial link in a thickened surface. If the surface is not minimal genus, then perform a destabilization, and notice that the link $L$ is still represented by a homologically trivial link in the destabilized surface. Thus, after a finite sequence of destablizations, it follows that $L$ can be represented by a homologically trivial link on a surface $\Si$ of minimal genus. If $D$ is the resulting diagram on $\Si$ for $L$, then Proposition \ref{prop:equiv} implies that $D$ is checkerboard colorable. 

Since $L$ is homologically trivial, we have  a Seifert surface $F$ for $L$ in $\Si \times I$. The surface $F$ can be realized as a union of disks and bands. Performing an isotopy of $F$, we can  shrink the disks so their images under projection $\Si \times I \to \Si$ are disjoint from one another and also disjoint from each band. The isotopy of $F$ induces an isotopy of the link diagram, and notice that the new link diagram may no longer be a minimal crossing diagram for $L$. 

Our goal is to show that this new diagram can be isotoped further so that the Seifert surface coincides with the spanning surface associated to the black regions. (This is equivalent to showing that $L$ can be represented by a special diagram in the sense of \cite[Definition 13.14]{Burde-Zieschang-Heusener}.) By construction, the new diagram bounds a Seifert surface, which projects one-to-one under $\Si \times I \to \Si$ except possibly at the intersections of the bands. 

\begin{figure}[ht]
\centering
\includegraphics[scale=1.10]{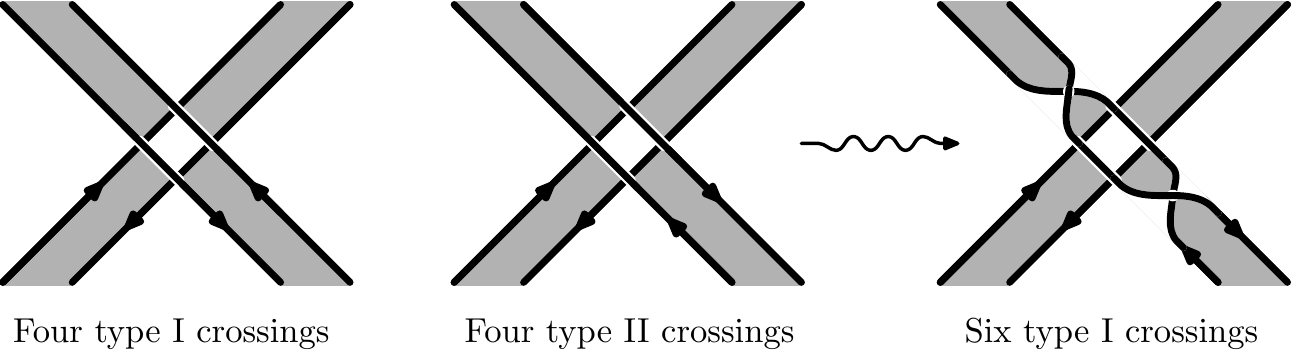} 
\caption{Band intersections with all crossings of the same type. An isotopy of the middle diagram gives the one on the right.}
\label{crossing-band}
\end{figure}

Whenever two bands intersect, the four crossings all have the same type, see the two diagrams on the left of Figure \ref{crossing-band}. If the four crossings have type II, then one can perform a Reidemeister 2 move to make them type I crossings, see the diagram on the right of Figure \ref{crossing-band}. After performing a finite sequence of such moves, the new link diagram will have only type I crossings. Consequently, the black regions of its associated checkerboard coloring will form an oriented spanning surface, and this completes the proof of the proposition.
\end{proof}

If the checkerboard surface $F_\xi$ is oriented, then so is the surface $F_\xi \#_\tau \Si$ obtained by tubing off  a parallel copy of the Carter surface. The next result follows from our previous observation that $F_\xi \#_\tau \Si$ is $S^*$-equivalent to the dual surface $F_{\xi^*}$, and Lemma \ref{lemma:S*-inv}, which shows that the checkerboard signatures and nullities are invariant under $S^*$-equivalence.
\begin{corollary} \label{cor-ac1}
Let $L$ be an almost classical link represented by a minimal genus diagram $D$ with checkerboard coloring $\xi$ whose checkerboard surface $F_\xi$ is oriented. Then the  signatures $\si(L,F_\xi)$ and $\si(L, F_\xi \#_\tau \Si)$ of the Seifert matrices are equal to the checkerboard signatures  $\si_\xi(D)$ and $\si_{\xi^*}(D)$, and the nullities $n(L,F_\xi)$ and $n(L,F_{\xi} \#_\tau \Si)$ are equal to the checkerboard nullities  $n_\xi(D)$ and $n_{\xi^*}(D)$.
\end{corollary}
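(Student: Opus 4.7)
The plan is to obtain Corollary \ref{cor-ac1} as a direct combination of Theorem \ref{thm:chromatic} and Theorem \ref{thm-ac-GL}, after first observing that under the almost classical hypothesis both candidate spanning surfaces $F_\xi$ and $F_\xi\#_\tau\Si$ are oriented Seifert surfaces for $L$. The surface $F_\xi$ is oriented by assumption, which forces every crossing of $D$ to be of type I and hence $\mu_\xi(D)=0$. For $F_\xi\#_\tau\Si$, the attached piece is a parallel copy of the Carter surface $\Si\times\{0\}$ which inherits the ambient orientation, and the connecting tube can be oriented compatibly with both pieces, so $F_\xi\#_\tau\Si$ is again oriented. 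By Proposition \ref{prop-S*} combined with Equation \eqref{span-and-dual}, these two surfaces represent the two $S^*$-equivalence classes of spanning surfaces for $L$ in $\Si\times I$.

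Next, for each oriented $F\in\{F_\xi,\,F_\xi\#_\tau\Si\}$, Theorem \ref{thm-ac-GL} identifies the Gordon--Litherland pairing $\cG_F$ with the symmetrized Seifert form $V^-+(V^-)^\tr$, and the remarks following that theorem record that the Euler correction vanishes when $F$ is oriented. Thus $\si(L,F)=\sig(V^-+(V^-)^\tr)$, $\det(L,F)=|\det(V^-+(V^-)^\tr)|$, and $n(L,F)=\nullity(V^-+(V^-)^\tr)$, so in the almost classical case the Gordon--Litherland signature, determinant, and nullity invariants of $F_\xi$ and of $F_\xi\#_\tau\Si$ reduce to the Seifert-based invariants of \cite{Boden-Chrisman-Gaudreau-2017a} computed from these two Seifert surfaces.

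Finally, I invoke Theorem \ref{thm:chromatic}: applied directly to $F'=F_\xi\#_\tau\Si$ it yields the triple of equalities $\si(L,F_\xi\#_\tau\Si)=\si_\xi(D)$, $\det(L,F_\xi\#_\tau\Si)=\det_\xi(D)$, and $n(L,F_\xi\#_\tau\Si)=n_\xi(D)$; interchanging the roles of the two colorings and using the $S^*$-equivalence $F_\xi\sim F_{\xi^*}\#_\tau\Si$ coming from Equation \eqref{span-and-dual} gives the complementary triple $\si(L,F_\xi)=\si_{\xi^*}(D)$, $\det(L,F_\xi)=\det_{\xi^*}(D)$, and $n(L,F_\xi)=n_{\xi^*}(D)$. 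Chaining these with the identifications of the previous paragraph matches the two Seifert-based invariants of $L$, one for each $S^*$-equivalence class, with the two Goeritz-based invariants $\si_\xi(D)$ and $\si_{\xi^*}(D)$, as asserted in the corollary. The only technical care is the orientability of $F_\xi\#_\tau\Si$ in the first step and the vanishing of the Euler correction for both oriented surfaces in the second; the rest is a direct citation of results already established in the paper.
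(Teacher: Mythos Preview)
Your proposal is correct and follows essentially the same route as the paper. The paper's one-sentence justification (immediately preceding the corollary) points to the $S^*$-equivalence of $F_\xi\#_\tau\Si$ with $F_{\xi^*}$ together with $S^*$-invariance of the invariants, but implicitly this rests on exactly the two inputs you make explicit: Theorem~\ref{thm:chromatic} to match the Gordon--Litherland invariants of $F_\xi$ and $F_\xi\#_\tau\Si$ with the Goeritz invariants $\si_{\xi^*}(D)$ and $\si_\xi(D)$, and Theorem~\ref{thm-ac-GL} (plus the remark after it that $e(F,L)=0$ for oriented $F$) to identify the Gordon--Litherland invariants with the Seifert-based ones. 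Your observation that $F_\xi\#_\tau\Si$ is oriented is also noted in the paper just before the corollary.
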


The following result is a direct consequence of Corollary \ref{cor-ac1}, and it summarizes the situation for almost classical links.

\begin{corollary} \label{cor-ac2}
Given an almost classical link $L$ with Seifert surface $F$, the signature $\si(L,F)$  is equal to the checkerboard signature $\si_\xi(D)$ for some coloring $\xi$. Conversely every checkerboard signature $ \si_\xi(D)$ is equal to the signature $\si(L,F)$ for some Seifert surface $F$.
\end{corollary}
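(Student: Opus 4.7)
The plan is to derive Corollary \ref{cor-ac2} as a bookkeeping consequence of Corollary \ref{cor-ac1}, combined with the $S^*$-equivalence classification of spanning surfaces from Proposition \ref{prop-S*} and the $S^*$-invariance of the signature from Lemma \ref{lemma:S*-inv}. The content of the statement is that, for almost classical links, the two families of signatures (Seifert-theoretic and checkerboard-theoretic) literally coincide set-wise, not just that one is computable from the other.

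First I would handle the forward direction. Start with an almost classical link $L$ and a Seifert surface $F$ for $L$. By the preceding proposition, $L$ admits a minimal genus representative with diagram $D$ on $\Si$ and a checkerboard coloring $\xi$ whose checkerboard surface $F_\xi$ is oriented; then $F_\xi \#_\tau \Si$ is also oriented (tubing onto a parallel copy of the Carter surface preserves orientability). Proposition \ref{prop-S*} says $F$ is $S^*$-equivalent either to $F_\xi$ or to its chromatic dual, and the latter is $S^*$-equivalent to $F_\xi \#_\tau \Si$. Since the signature depends only on the $S^*$-equivalence class (Lemma \ref{lemma:S*-inv}), $\si(L,F)$ equals either $\si(L,F_\xi)$ or $\si(L, F_\xi \#_\tau \Si)$. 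Corollary \ref{cor-ac1} identifies these two values with $\si_\xi(D)$ and $\si_{\xi^*}(D)$, respectively, which gives the first half.

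For the converse, start instead with a checkerboard signature $\si_\xi(D)$ for some coloring $\xi$ of some diagram of $L$. Without loss of generality, pass to the minimal genus diagram provided by the preceding proposition, and note that by Kuperberg's theorem together with the discussion of Section \ref{subsec:1-1}, the values $\si_\xi(D)$ and $\si_{\xi^*}(D)$ are well-defined invariants computed on the minimal genus representative. Corollary \ref{cor-ac1} says that $\si_\xi(D)$ equals one of $\si(L,F_\xi)$ or $\si(L, F_\xi \#_\tau \Si)$. Since both $F_\xi$ and $F_\xi \#_\tau \Si$ are oriented, they are Seifert surfaces for $L$, so $\si_\xi(D)$ has been realized as a Seifert-surface signature.

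I do not expect any serious obstacle: the only thing to watch is the slight asymmetry in chromatic duality between $\xi$ and $\xi^*$, which must be tracked when quoting Corollary \ref{cor-ac1} so that the pairing of the two $S^*$-classes of oriented spanning surfaces with the two checkerboard signatures is stated consistently. Because Corollary \ref{cor-ac1} already matches both $S^*$-classes to both colorings, once we observe that there are \emph{only} two $S^*$-classes (by Proposition \ref{prop-S*}) and that both are represented by oriented surfaces, the statement of Corollary \ref{cor-ac2} follows without further computation, which is why the authors note that the proof is immediate.
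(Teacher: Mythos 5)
Your proposal is correct and follows essentially the same route as the paper: the authors explicitly state that Corollary \ref{cor-ac2} is a direct consequence of Corollary \ref{cor-ac1}, and your argument simply unpacks that implication by invoking Proposition \ref{prop-S*} (two $S^*$-classes) and Lemma \ref{lemma:S*-inv} ($S^*$-invariance of signature) in the forward direction, and observing that both $F_\xi$ and $F_\xi \#_\tau \Si$ are oriented in the converse. You are also right that the precise chromatic pairing between $\{\xi,\xi^*\}$ and $\{F_\xi, F_\xi\#_\tau\Si\}$ is immaterial here since only the unordered identification of the two-element sets is used.
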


As a consequence of \cite[Theorem 2.5]{Boden-Chrisman-Gaudreau-2017a}, it follows that for almost classical knots, the checkerboard signatures $\si_\xi(K)$ are slice obstructions and give information on the slice genus of $K$. (Definitions of virtual concordance for knots in thickened surfaces and virtual knots can be found in \cite{Boden-Chrisman-Gaudreau-2017a}.) It is an interesting problem to extend those concordance results to all checkerboard colorable knots. We hope to address that question in future research.


\section{Branched covers and intersection forms} \label{sec-6}
In this section, we relate the Gordon-Litherland pairing to the relative intersection form of a certain double branched cover of $W \times I$. In order to do that, we recall some background material on intersection forms for 4-manifolds with boundary.


\subsection{Relative intersection forms}  
Let $X$ be a compact, connnected, oriented $4$-manifold with $\partial X \ne \varnothing$. A \textit{decomposing pair} of $\partial X$ is a pair of compact $3$-manifolds $(Y,Y')$ with boundary such that $Y \cup Y'=\partial X$, $Y \cap Y'= \partial Y=\partial Y'$. In the following, we will consider two decomposing pairs $(Y,Y')$ and $(Z, Z')$ such that $Y \subset \Int(Z')$ and $Z \subset \Int(Y')$. We will refer to $(Y,Y'), (Z,Z')$ as \textit{dual boundary decompositions} of $\partial X.$ Notice that for all dual boundary decompositions, we have that $Y \cap Z=\varnothing$ and $Y' \cup Z'=\partial X$.

\begin{example} \label{ex_top_bot_thick} Let $W$ be a compact oriented $3$-manifold such that $\partial W \ne \varnothing$ and set $X=W \times I$. Let $Y=W \times \{0\}$ and $Y'=\partial W \times I \cup W \times \{1\}$, and likewise, let $Z=W \times \{1\}$ and $Z'=\partial W \times I \cup W \times \{0\}$. Then  $(Y,Y')$ and $(Z,Z')$ are dual boundary decompositions of $\partial (W \times I)$.
\hfill $\Diamond$ \end{example}

\begin{example} \label{ex_top_bot_triv} The \textit{trivial dual boundary decompositions} of $X$ are given by setting $Y=Z=\varnothing$ and $Y'=Z'=\partial X$. 
\hfill $\Diamond$ \end{example}

Let $[X] \in H_4(X,\partial X;\ZZ)$ denote the fundamental class of $X$. Then Poincar\'{e} duality for manifolds with boundary implies that cap product with $[X]$ gives isomorphisms (see \cite[Theorem 3.43]{Hatcher-2002}  or \cite[p.358]{Bredon-1993}):
\begin{align*}
&\cdot \smallfrown [X] \co H^2(X,Y';\ZZ) \lto H_2(X,Y;\ZZ), \\ 
&\cdot \smallfrown [X] \co H^2(X,Z';\ZZ) \lto H_2(X,Z;\ZZ). 
\end{align*} 

Let $D_0\co H_2(X,Y;\ZZ)\lto H^2(X,Y';\ZZ)$ and $D_1\co H_2(X,Z;\ZZ)\lto H^2(X,Z';\ZZ)$ denote the inverses of these isomorphisms, respectively. For $\al \in H^2(X,Y';\ZZ)$, $\be \in H^2 (X,Z';\ZZ)$, observe that:
$$\al \smallsmile \be \in H^4(X,Y' \cup Z';\ZZ)=H^4(X,\partial X;\ZZ) \cong H_0(X;\ZZ) \cong \ZZ,$$
where $\cdot \smallsmile \cdot$ denotes the relative cup product. With these definitions in place, we now define a relative intersection form for $X$.

\begin{definition}  
For dual boundary decompositions $(Y,Y')$ and $(Z,Z')$ of $\partial X$, the \textit{relative intersection form} is the pairing:
$$\cQ \co H_2(X,Y;\ZZ) \times H_2(X,Z;\ZZ) \lto \ZZ, $$
given by setting $\cQ(\al,\be)=\langle D_0(\al) \smallsmile D_1(\be), [X] \rangle.$
\end{definition}

\begin{example} For the dual boundary  decompositions of $\partial X$ for $X=W\times I$ in Example \ref{ex_top_bot_thick}, $H_2(X,Y;\ZZ) \cong H_2(X,Z;\ZZ) \cong 0$ and $\cQ\equiv 0$.
\hfill $\Diamond$ \end{example}

\begin{example} For the trivial dual boundary decompositions of $\partial X$ in Example \ref{ex_top_bot_triv} with $Y=Z=\varnothing$, the relative intersection form is identical to the usual intersection form on $X$.
\hfill $\Diamond$ \end{example}

Suppose $\al \in H_2(X,Y;\ZZ)$ and $\be \in H_2(X,Z;\ZZ)$ are represented by compact oriented surfaces $F,G$ smoothly embedded in $X$ with $\partial F \subset Y$ and $\partial G \subset Z$. Assuming that $F$ and $G$ intersect transversely in $X$, then $F \cap G$ consists of a finite set of points in $\Int(X)$. In this case, we have the relative intersection pairing:
$$\xymatrix{
H_2(F,\partial F;\ZZ) \otimes H_2(G,\partial G;\ZZ) \ar[r]^-{\bigcdot} & H_0(F \cap G;\ZZ) \ar[r]^-{\ep} & \ZZ,} $$  
where $\ep \co H_0(F \cap G;\ZZ) \to \ZZ$ is the augmentation map. The composition $\ep(\al\bigcdot \be)$ can be calculated as the signed sum of the local intersection numbers of $F \cap G$. Furthermore, it can be shown that $\ep(\al \bigcdot \be)=\langle D_0(\al)\smallsmile D_1(\be),[X] \rangle$ (see \cite[Chapter VIII, Section 13]{Dold-1980}). 


\subsection{Mirror double branched covers} \label{sec_double} 

Given a 4-manifold of the form $W\times I$ and a compact surface $F$ in $W
\times I$, we construct the mirror double cover $W\times I$ branched along $F$. We will use this construction to show the Gordon-Litherland pairing is equivalent to the relative intersection form of the mirror double branched cover. 

To begin, we recall the relevant results for classical knots. Let $K$ be a knot in $S^3$ with spanning surface $F$, and let $M_{\wh{F}}$ be the 2-fold cover of $B^4$ branched along $\wh{F}$, a copy of $F$ with $\Int(F)$ pushed into $\Int(B^4)$. Gordon and Litherland \cite{GL-1978} showed that there is an isometry between $(H_2(M_{\wh{F}};\ZZ), \, \bigcdot \,)$ and $(H_1(F;\ZZ), \cG_F)$, where $\bigcdot$ denotes the intersection form on $H_2(M_{\wh{F}};\ZZ)$. 

We now explain how to generalize these results to checkerboard colorable knots $K \subset \Si \times I$.

Let $F\subset\Si \times I$ be a compact connected surface with $\partial F=K$. Suppose $W$ is a compact oriented $3$-manifold with $\partial W=\Si$. Push $\Int (F)$ into $\Int(W \times I)$ to obtain a properly embedded surface $\wh{F} \subset W \times I.$  The mirror double cover branched along $\wh{F}$ is denoted $M_{\wh{F}}$ and constructed as follows.

First, cut the 4-manifold $W \times I$ open along the trace of the isotopy which pushes $\Int (F)$ into $\Int(W \times I)$. The cut parts are homeomorphic to a tubular neighborhood $N$ of $F$ in $\Si \times I$, which is an $I$-bundle over $F$. Consider a second diffeomorphic copy of the cut 4-manifold under the map $W \times I \to W \times I$ sending $(x,t)\mapsto (x,1-t).$ Notice that the two copies are diffeomorphic by an orientation reversing diffeomorphism.

We will use $W_1, W_2$ to denote the two copies of $W$ and $N_1 \subset W_1 \times I$
and $N_2 \subset W_2 \times I$ for the tubular neighborhoods of $F$ in the two copies. Set
$$M_{\wh{F}}=(W_1\times I)\cup(W_2\times I)/((x,t)\in N_1\subset W_1\times I\sim (x,1-t)\in N_2\subset W_2\times I).$$
We obtain dual boundary decompositions of $\partial M_{\wh{F}}$ by setting 
$$Y = (W_1\times \{0\})\cup(W_2\times \{0\}), \quad Z = (W_1\times \{1\})\cup (W_2\times \{1\}),$$ 
and taking the obvious choices for $Y', Z'$. Then we have a relative intersection form:
$$\cQ_F\co H_2(M_{\wh{F}},Y;\ZZ) \times H_2(M_{\wh{F}},Z;\ZZ) \to \ZZ.$$

To identify $\cQ_F$ with the Gordon-Litherland form $\cG_F$, first apply Mayer-Vietoris to the pairs $(M_{\wh{F}},Y)$ and $(M_{\wh{F}},Z)$. The connecting homomorphisms give isomorphisms:
\begin{equation} \label{eqn:isomorphisms}
\begin{split}
&\varphi_0 \co H_2(M_{\wh{F}},Y;\ZZ) \to H_1(N;\ZZ)\cong H_1(F;\ZZ), \\
&\varphi_1 \co H_2(M_{\wh{F}},Z;\ZZ) \to H_1(N;\ZZ)\cong H_1(F;\ZZ).
\end{split}
\end{equation}
This can be seen by writing $M_{\wh{F}}$ as a union $(W_1\times I)\cup (W_2\times I)$ and noting that $H_n(W_i\times I, W_i\times \{1\};\ZZ)\cong H_n(W_i\times I, W_i\times \{0\};\ZZ)\cong 0$ for  $i=1,2$ and $n \geq 0$. 

\begin{theorem} \label{thm:isom} 
For $a,b \in H_1(F;\ZZ)$, we have $\cG_F(a,b)=\cQ_F(\varphi_0^{-1}(a), \varphi_1^{-1}(b))$.
\end{theorem}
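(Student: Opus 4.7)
The plan is to exhibit explicit geometric representatives for $\varphi_0^{-1}(a)$ and $\varphi_1^{-1}(b)$, decompose the intersection in $M_{\wh F}$ into contributions from the two sheets $W_1 \times I$ and $W_2 \times I$, and match these contributions to the terms in the formula $\cG_F(a,b) = \lk_\Si(\tau a, b) - p_*(a) \cdot p_*(b)$.

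Represent $a$ by a simple closed curve $\alpha$ on $F \subset \Si \times I$, and realize its transfer $\tau\alpha$ by push-offs $\alpha^\pm$ of $\alpha$ in $\Si \times I \smallsetminus F$ (two curves in the orientable case, a single curve on $\wt F$ in the non-orientable case). In each sheet $W_i \times I$, choose a 2-chain $B_i^a$ whose boundary is the relevant push-off (near $\wh F$) together with a 1-chain $\gamma_i \subset W_i \times \{0\}$; a convenient concrete choice is the vertical cylinder $p(\alpha) \times [0, t_F]$ in $\Si \times I$, perturbed slightly off $\wh F$. Because the mirror identification $(x,t) \mapsto (x,1-t)$ glues the positive push-off in $W_1$ to the negative push-off in $W_2$, the union $\wt B^a := B_1^a \cup B_2^a$ assembles into a genuine relative 2-cycle in $(M_{\wh F}, Y)$. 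Its image under the Mayer-Vietoris connecting map \eqref{eqn:isomorphisms} is $[\alpha] = a$, so $[\wt B^a] = \varphi_0^{-1}(a)$. An analogous $\wt B^b = B_1^b \cup B_2^b$, built from a curve $\beta$ representing $b$ and using $W_i \times \{1\}$-boundaries, represents $\varphi_1^{-1}(b)$.

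Arranging all these chains to be pushed off of $\wh F$, the two sheets overlap only inside the tubular neighborhood of $\wh F$, so the cross-sheet intersections $B_1^a \cdot B_2^b$ and $B_2^a \cdot B_1^b$ vanish, giving
\[
\cQ_F(\varphi_0^{-1}(a), \varphi_1^{-1}(b)) \;=\; (B_1^a \cdot B_1^b)_{W_1 \times I} \;+\; (B_2^a \cdot B_2^b)_{W_2 \times I}.
\]
In each sheet, the four-dimensional intersection of $B^a$ and $B^b$ is evaluated by a perturbation argument: push one chain infinitesimally off $\Si \times I$ into $\Int(W \times I)$, keeping boundaries fixed, so that the surviving intersections can be counted over the signed intersection points of the relevant curves on $\Si \times I$. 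This identifies each sheet-contribution with a relative linking $\lk_\Si(\alpha^{\pm_i}, \beta)$ together with a half-contribution to $-p_*(a) \cdot p_*(b)$ coming from the asymmetry \eqref{eq:linking} of $\lk_\Si$. Summing the two sheets uses $\tau\alpha = \alpha^+ + \alpha^-$ to recover $\lk_\Si(\tau\alpha, \beta)$, and the two half-corrections add to the $-p_*(a) \cdot p_*(b)$ term; the upshot is $\cG_F(a,b)$ as desired.

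The main technical obstacle is the sheet-wise computation. The chains $B_i^a$ and $B_i^b$ are naturally supported near the 3-dimensional piece $\Si \times I$ of $\partial(W \times I)$ and so are not transverse as 2-chains in the 4-manifold; generically they meet along 1-dimensional arcs, and a careful perturbation is required to extract a bona fide intersection number. This perturbation must simultaneously produce the linking data (giving the $\lk_\Si(\tau a, b)$ term) and the "boundary overlap" intersections on $\Si$ itself (giving the $-p_*(a) \cdot p_*(b)$ correction), while handling signs consistently with the orientation-reversing mirror identification that defines $M_{\wh F}$. As a bonus, the argument makes transparent why the intersection form is independent of the filling $W$: only the data near $\Si \times I$ enters the final answer, through the Gordon-Litherland pairing itself.
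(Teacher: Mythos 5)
Your overall plan --- exhibit geometric representatives for $\varphi_0^{-1}(a)$ and $\varphi_1^{-1}(b)$ as 2-chains decomposed over the two sheets $W_1\times I$ and $W_2\times I$, observe that cross-sheet intersections vanish once the boundary curves are made disjoint in $N$, and then recover $\cG_F(a,b)$ by combining the sheet-wise linking contributions via the transfer and the asymmetry \eqref{eq:linking} --- is the same as the paper's. But the concrete implementation you sketch has a gap that you flag and do not close. By insisting on vertical cylinders of the form $p(\alpha)\times[0,t_F]$ supported in $\Si\times I\subset\partial(W\times I)$, you force the 2-chains onto the boundary of the 4-manifold, where generically they meet along arcs rather than points; you acknowledge that ``a careful perturbation is required'' to produce simultaneously the linking data and the boundary-overlap intersections giving $-p_*(a)\cdot p_*(b)$, and you do not supply that perturbation. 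The paper avoids this problem entirely: $\Cyl_{i,j}(\alpha)$ is taken to be \emph{any} surface in $W_i\times I$ running from $\alpha\subset N_i$ to a curve in $W_i\times\{j\}$, so for disjoint $\alpha,\beta\subset N$ one may choose them properly embedded and mutually transverse, and then $\ep\bigl(C_0([\alpha])\bigcdot C_1([\beta])\bigr)=\overline{\lk}_\Si(a,b)+\underline{\lk}_\Si(a,b)$ is a direct count of interior intersection points with no further perturbation needed. The difficulty you label the ``main technical obstacle'' is created by your choice of representative, not intrinsic to the argument.

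There is also a substantive imprecision in how the correction term appears. It is not the case that each sheet contributes ``a half-contribution to $-p_*(a)\cdot p_*(b)$.'' In the paper's computation the two sheets contribute $\overline{\lk}_\Si$ and $\underline{\lk}_\Si$ respectively (the mirror identification $(x,t)\mapsto(x,1-t)$ turns top-relative linking into bottom-relative linking in the second sheet), and the $-p_*(a)\cdot p_*(b)$ correction emerges only after one rewrites $\underline{\lk}_\Si(J,K)=\overline{\lk}_\Si(K,J)$ and applies \eqref{eq:linking}. Moreover, the paper is careful about the case where $\alpha$ and $\beta$ intersect on $F$: it computes $\ep(C_0\bigcdot C_1)$ only for genuinely disjoint cycles, then applies this to the two disjoint pairs $(\alpha,\tau\beta)$ and $(\tau\alpha,\beta)$, uses $\tau\gamma\sim 2\gamma$ in $N$ together with a factor $1/4$, and finally invokes the symmetry of $\cG_F$. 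Your asymmetric choice (push-offs for $a$, the on-surface curve $\beta$ for $b$) can in principle replace that symmetrization, but to do so you would need to specify exactly which push-off lives in which sheet, verify signs against the orientation-reversing gluing, and carry through the intersection-to-linking identification you have postponed.
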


\begin{proof} The inverse maps of $\varphi_0$ and $\varphi_1$ may be described as follows. For the tubular neighborhood $N$ of $F$ in $\Si \times I$ as above, let $i \co F \hookrightarrow N$ be the inclusion map. Since $N$ is an $I$-bundle over $F$, it follows that $i_* \co H_1(F;\ZZ) \to H_1(N;\ZZ)$ is an isomorphism. 

Suppose $\al\subset N$ is a simple closed curve. For $i=1,2$ and $j=0,1,$ let $\Cyl_{i,j} (\al)$ be a surface in $W_i\times I$, connecting $\al \subset N_i$ to a simple closed curve in $W_i\times \{j\}$. Define: 
\begin{equation*}
\begin{split}
C_0(\al) &= \Cyl_{1,0}(\al)\cup (-\Cyl_{2,0}(\al)), \\ 
C_1(\al) &= \Cyl_{1,1}(\al)\cup (-\Cyl_{2,1}(\al)).
\end{split}
\end{equation*}
Then $C_0(\al), C_1(\al)$  represent relative 2-cycles in $H_2(M_{\wh{F}},Y;\ZZ),H_2(M_{\wh{F}},Z;\ZZ)$, respectively. Set $C_0([\al]) = [C_0(\al)]$, $C_1([\al])=[C_1(\al)]$. It follows from the Mayer-Vietoris sequences defining $\varphi_0,\varphi_1$ that $C_0=\varphi_0^{-1}$ and $C_1=\varphi_1^{-1}$. 

If $\al,\be$ are two disjoint 1-cycles in $N$, then  
$$\Cyl_{1,0}(\al) \cap \Cyl_{2,1}(\be) = \varnothing = \Cyl_{2,0}(\al) \cap \Cyl_{1,1}(\be).$$
Therefore, the intersection of the homology classes is given by:
\begin{equation*}
\begin{split}
C_0([\al])\bigcdot C_1([\be])
&= \left( \Cyl_{1,0}(\al)\cup (-\Cyl_{2,0}(\al))\right) \bigcdot \left(\Cyl_{1,1}(\be)\cup (-\Cyl_{2,1}(\be)) \right) \\
&= \Cyl_{1,0}(\al) \bigcdot \Cyl_{1,1}(\be)  + \Cyl_{2,0}(\al) \bigcdot \Cyl_{2,1}(\be).
\end{split}
\end{equation*}

Now suppose $a,b \in H_1(F;\ZZ)$ with $i_*(a), i_*(b)$ represented by cycles $\al, \be$ in $N$. It is then clear from the construction that:
$$\ep(C_0([\al])\bigcdot C_1([\be]))=\overline{\lk}_{\Si}(a,b)+\underline{\lk}_{\Si}(a,b)=\overline{\lk}_{\Si}(a,b)+\overline{\lk}_{\Si}(b,a).$$
This uses linking $\underline{\lk}_{\Si}(a,b)$ relative to the bottom in the second copy of $W \times I$, because of the orientation reversing diffeomorphism $(x,t) \to (x,1-t)$ on that component. Note that the last step follows from the fact that $\underline{\lk}_{\Si}(a,b)  = \overline{\lk}_{\Si}(b,a)$.

Notice that $\al$ and $\tau \be$ are disjoint cycles in $N$, as are $\be$ and $\tau \al$, and that $\tau \al, \tau \be$ are homologous in $N$ to $2\al, 2\be$, respectively. These observations together with the above equation show that
\begin{equation*}
\begin{split}
\cQ_F(\al,\be)&= \ep(C_0([\al]) \bigcdot C_1([\be]) \\
&= \tfrac{1}{4}  \ep(C_0([\al])\bigcdot C_1([\tau \be])) + \tfrac{1}{4} \ep( C_0([\tau \al]) \bigcdot C_1([\be]))  \\
&= \tfrac{1}{4}\left( \overline{\lk}_{\Si}(\al, \tau \be) + \overline{\lk}_{\Si} (\tau \be, \al) + \overline{\lk}_{\Si} (\tau \al, \be) + \overline{\lk}_{\Si}(\be, \tau \al) \right) \\
&= \tfrac{1}{4}\left( 2 \overline{\lk}_{\Si} (\tau \be, \al) 
- p_*(\tau \be) \cdot p_*(\al)  + 2\overline{\lk}_{\Si} (\tau \al, \be) - p_*(\tau \al) \cdot p_*(\be) \right) \\
&= \tfrac{1}{2}\left(  \overline{\lk}_{\Si} (\tau \be, \al) 
- p_*(\be) \cdot p_*(\al)  + \overline{\lk}_{\Si} (\tau \al, \be) - p_*(\al) \cdot p_*(\be) \right) \\
&= \tfrac{1}{2} \left(\cG_F(\be, \al) + \cG_F(\al, \be) \right) \\
&= \cG_F(\al, \be).
\end{split}
\end{equation*}
The third step requires one to apply Equation \eqref{eq:linking} to the first and last terms in line two, and the last step uses Equation \eqref{defn:GL} and the fact that $\cG_F$ is symmetric, cf., Lemma \ref{lem:sym}.  
\end{proof}


\subsection{Relative handlebody decompositions} 
Any $4$-dimensional handlebody without 1- or 3-handles can be described as a surgery on a framed link, and in that case the intersection form of the 4-manifold is represented by the linking matrix \cite[Proposition 4.5.11]{Gompf-Stipsicz}. We will develop  analogous results for relative 2-handlebodies, which are $4$-manifolds obtained by attaching 2-handles to $W \times I$, where $W$ is a compact oriented $3$-manifold with $\partial W =\Si$.

Given a knot $K$ in $\Si \times I$, a \textit{framing} is a choice of parallel $K'$ to $K$. The 0-framing is the parallel with $\lk_{\Si}(K,K')=0.$ (Note that this implies $\lk_{\Si}(K',K)=0$ as well. In fact, although the linking pairing $\lk_{\Si}(\cdot, \cdot)$ is not generally symmetric, if $K,K'$ are parallel curves, then they cobound a ribbon in $\Si \times I,$ and $\lk_{\Si}(K,K')$ and $\lk_{\Si}(K',K)$ are both equal to the number of full twists in that ribbon.) For $\ka \in \ZZ,$ the $\ka$-framing is obtained from the $0$-framing by adding $|\ka|$ full twists, where we use right-hand twists if $\ka$ is positive and left-hand twists if $\ka$ is negative. If $K$ is a framed knot, we use $\fr(K)=\ka$ to denote its framing. A framed oriented link $L=K_1 \cup \cdots \cup K_n \subset \Si \times I$ is then an oriented link with a choice of framing for each component $K_i$. The \textit{linking matrix} of $L$ is the $n \times n$ matrix whose $(i,j)$ entry is equal to $\lk_{\Si}(K_i,K_j)$ for $i \ne j$ and to $\fr(K_i)$ for $i=j$.

Given a framed link $L=K_1 \cup \cdots \cup K_n\subset\Si \times I$ and compact oriented 3-manifold $W$ with $\partial W =\Si$, let $X$ be a $4$-manifold obtained by attaching 2-handles $h_1^2,\ldots,h_n^2$ to $W \times I$ along $L$ so that $K_i$ is the attaching sphere for $h_i^2$.

We define a relative intersection form on $X$ as follows. Let $Y=W\times \{0\}, Y'=\overline{\partial X\smallsetminus Y}$ and $Z=W \times \{1\}, Z'=\overline{\partial X\smallsetminus Z}$. It is straightforward to check that $(Y,Y')$ and $(Z,Z')$ give  dual boundary decompositions on $X$. The relative intersection form on $X$ is given by 
$$\cQ_X\co H_2(X,Y;\ZZ) \times H_2(X,Z;\ZZ) \to \ZZ.$$

Fix an orientation of $L$. Let $S_1,\ldots, S_n$ be surfaces in $\Si \times I$ such that $\partial S_i=K_i \cup (\bigcup_j \ga_{i,j})$, where $\{\ga_{i,j}\}$ is a collection of closed curves in $\Si \times \{0\}$. Set $\be_{i}=\text{core}(h_i^2) \cup -S_{i}$. Then $B_Y=\{\be_{1},\ldots,\be_{n}\}$ gives a basis for $H_2(X,Y;\ZZ)$. Likewise, let $S'_1,\ldots, S'_n$ be surfaces in $\Si \times I$ such that $\partial S'_i=K_i \cup (\bigcup_j \ga'_{i,j})$, where $\{\ga'_{i,j}\}$ is a collection of closed curves in $\Si \times \{1\}$. Set $\be'_{i}=\text{core}(h_i^2) \cup -S'_{i}$. Then  $B_Z=\{\be'_{1},\ldots,\be'_{n}\}$ is a basis for $H_2(X,Z;\ZZ)$.
 
\begin{proposition} \label{prop_link_matrix_rel_int} 
The linking matrix of $L$ represents the relative intersection form $\cQ_X \co H_2(X,Y;\ZZ) \times H_2(X,Z;\ZZ) \to \ZZ$ with respect to the bases $B_Y, B_Z$.
\end{proposition}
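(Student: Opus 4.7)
The plan is to compute each pairing $\cQ_X(\be_i, \be'_j)$ as an algebraic intersection number in the 4-manifold $X$, using the identification $\cQ_X(\al, \be) = \ep(\al \bigcdot \be)$ valid for transverse geometric representatives. The main task is therefore to isotope $\be_i$ and $\be'_j$ into transverse position inside $X$ and to identify the resulting signed count with the appropriate entry of the linking matrix of $L$.

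To that end, I would construct a transverse representative $\wt{\be}'_j$ of $\be'_j$ as follows. Using a collar $\Si \times (-\ep,0] \subset W$ of $\Si = \partial W$, push the interior of $S'_j$ slightly off $\Si \times I$ into the interior of $W \times I$, keeping the boundary curves $\ga'_{j,*} \subset \Si \times \{1\}$ fixed in $Z = W \times \{1\}$ and keeping the boundary circle $K_j$ attached to the attaching region of $h_j^2$. The resulting surface $\wt{S}'_j$ meets $\Si \times I$ only in a thin annular neighborhood of $K_j$. On the diagonal $i=j$, I would additionally use the chosen framing to slide this boundary circle off $K_i$ onto the parallel framing curve $K^{\fr}_i$, while simultaneously replacing the core of $h_i^2$ by the parallel disk inside $h_i^2$ bounded by $K^{\fr}_i$.

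Now I would count the intersections of $\be_i = \text{core}(h_i^2) \cup (-S_i)$ with $\wt{\be}'_j$ in $X$. Cores of distinct handles are automatically disjoint, and on the diagonal the two parallel disks in $h_i^2$ are disjoint as well, so the core-core contribution vanishes. Since $\wt{S}'_j$ has been pushed off $\Si \times I$ outside a narrow annulus around $K_j$, it meets $S_i$ only inside that annulus, and the count reduces to the algebraic intersection in the 3-manifold $\Si \times I$ of the curve $K_j$ (or $K^{\fr}_i$ on the diagonal) with the surface $S_i$. Because $\partial S_i = K_i \cup \ga_{i,*}$ with $\ga_{i,*} \subset \Si \times \{0\}$, the definitions of relative linking give $K_j \cdot S_i = \underline{\lk}_\Si(K_j, K_i) = \overline{\lk}_\Si(K_i, K_j) = \lk_\Si(K_i, K_j)$ for $i \neq j$, while on the diagonal $K^{\fr}_i \cdot S_i = \lk_\Si(K^{\fr}_i, K_i) = \fr(K_i)$ by the symmetry of linking for parallel curves. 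Cross intersections between $\text{core}(h_i^2)$ and $\wt{S}'_j$ vanish for $i \neq j$, since $\wt{S}'_j$ has been isotoped away from a neighborhood of $K_i$.

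The main obstacle is the careful bookkeeping of signs and orientations: keeping track of the minus sign in $-S_i$ and $-S'_j$, the induced orientation on $\be_i$ and $\be'_j$, and making sure the local 4-dimensional signs at each intersection point in $X$ match the local 3-dimensional signs of $K_j \cdot S_i$ in $\Si \times I$. A secondary step is verifying that $B_Y$ and $B_Z$ are indeed bases for the relative $H_2$, which follows from the excision/Mayer-Vietoris analysis of $X = (W \times I) \cup \bigcup_i h_i^2$: since $W \times I$ deformation retracts onto both $W \times \{0\} = Y$ and $W \times \{1\} = Z$, each 2-handle contributes exactly one generator to $H_2(X,Y;\ZZ)$ and one to $H_2(X,Z;\ZZ)$, realized respectively by $\be_i$ and $\be'_j$. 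Assembling these computations then shows that the matrix of $\cQ_X$ with respect to $B_Y, B_Z$ is precisely the linking matrix of the framed link $L$.
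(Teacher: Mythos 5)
Your proposal is correct and takes essentially the same approach as the paper: identify $B_Y$, $B_Z$ as bases via the isomorphisms $H_2(X,Y)\cong H_2(X,W\times I)\cong H_2(X,Z)$, then compute $\cQ_X(\be_i,\be'_j)$ geometrically by putting the representative surfaces in general position and reading off the intersection count as a relative linking number, reducing the diagonal to the framing via the parallel pushoff. The one cosmetic difference is in the perturbation: you push only $S'_j$ into the interior of $W\times I$, keeping $K_j$ (and $\ga'_{j,*}$) pinned, so that all intersections concentrate at $K_j\cap S_i$ and you read off $K_j\cdot S_i = \underline{\lk}_\Si(K_j,K_i)=\lk_\Si(K_i,K_j)$; the paper instead pushes both $S_i$ and $S'_j$ into a collar $V\times I\subset W\times I$ at different depths and reads off $K_i\cdot S'_j=\overline{\lk}_\Si(K_i,K_j)$. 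These agree by the identity $\underline{\lk}_\Si(J,K)=\overline{\lk}_\Si(K,J)$, which you correctly invoke, so the two computations are equivalent. You do gloss over the corner-smoothing of $\be'_j$ along $K_j$ and the orientation bookkeeping, but the paper's proof is comparably terse on exactly those points, so this is not a gap beyond what the original argument itself leaves implicit.
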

\begin{proof} Clearly, $H_2(X,W \times I;\ZZ)$ is a free abelian group generated by the cores of the $2-$handles $h_1^2, h_2^2, \ldots, h_n^2$ (see \cite[Chapter V.4]{Dold-1980}). Further, since $H_2(X,Y;\ZZ)\cong H_2(X,W \times I;\ZZ) \cong H_2(X,Z;\ZZ)$, it follows that  $H_2(X,Y;\ZZ)$ and $H_2(X,Z;\ZZ)$ are free abelian groups generated by $B_{Y}$ and $B_{Z}$, respectively.

Let $V\approx \Si \times [0,1] \subset W$ be a collar of $\partial W=\Si$. The intersection of $\be_{i}$ and $\be'_{j}$ can be visualized in the thickened collar $V\times I$. For $i \ne j$, push $S_{i}$ and $S'_{j}$ straight down into $V\times I$ so that  $S_{i}$ lies lower in $V\times I$ than $S'_{j}$. At some $t \in I$, we see $S'_{j}$ in $V \times \{t\}$ together with a copy of $K_i$. The intersection of $\be_{i}$ and $\be'_{j}$ is the intersection of $K_i$ with $S'_{j}$, which is $\lk_{\Si}(K_i,K_j)$. For $i=j$, let $K_i^+$ be the longitude of $K_i$ obtained by pushing the core of $h_i^2$ off itself in the positive normal direction, i.e., $K_i^+=\partial(h_i^2)^+$. Arguing as above, it follows that $\cQ_X(\be_{i},\be'_{i})$ is the intersection of $K_i^+$ with $S'_{i}$. This intersection number is exactly $\fr(K_i)$. 
\end{proof}


\begin{figure}[h]
\centering
  \includegraphics[scale=.80]{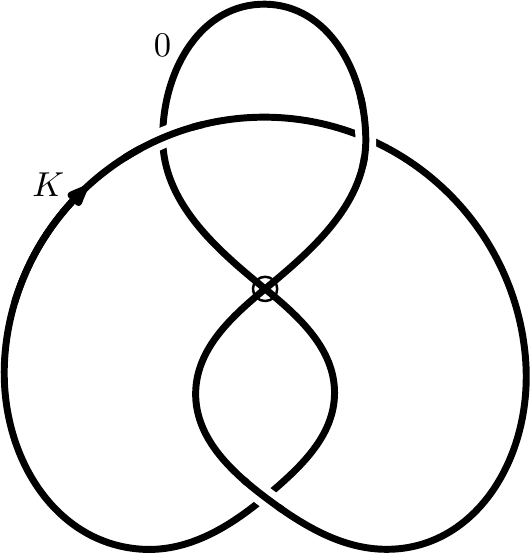}\hspace{1cm}
   \includegraphics[scale=.80]{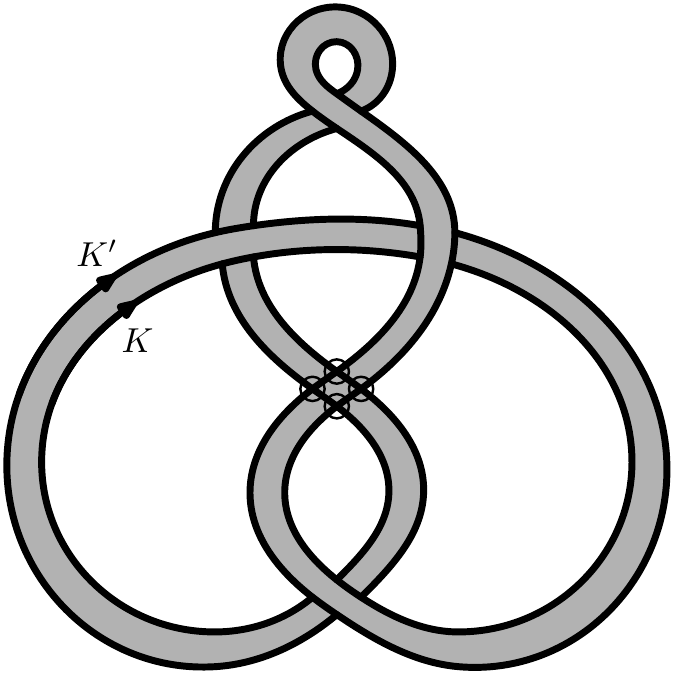} 
 \caption{A $0$-framed virtual knot (left) and a virtual ribbon that determines the framing (right).} \label{fig_framing}
\end{figure}


\subsection{Virtual linking matrices} \label{sec:vln}

We begin by recalling the notion of \textit{virtual linking numbers} (cf. Section \ref{sec:rel-link}). Given an oriented virtual link $L$ with components $J,K$, the virtual linking number is denoted $\vlk(J,K)$ and defined as the sum of the signs of the crossings where $J$ goes over $K$. One can check that the virtual linking numbers $\vlk(\cdot,\cdot)$ of an oriented virtual link coincide with the relative linking numbers $\lk_\Si(\cdot,\cdot)$ of the associated oriented link in a thickened surface.

For an individual component of $L$, its self-linking is specified by a choice of framing. In general, for a virtual knot $K$, a framing is a choice of parallel $K'$ to $K$. By \cite[Section 4.2]{Chrisman-2020}, a framing can be drawn as a \textit{virtual ribbon}, which is an immersed annulus in the plane with only virtual and classical band crossings as in Figure \ref{vc-band-crossing}. An example can be found in Figure \ref{fig_framing}.  

By convention, the $0$-framing of $K$ is the parallel with $\vlk(K,K')=\vlk(K',K)=0$. The $\ka$-framing is obtained from the $0$-framing by adding $|\ka|$ full twists, where we use right-hand twists if $\ka$ is positive and left-hand twists if $\ka$ is negative. As before, we use $\fr(K) \in \ZZ$ to denote a choice of framing for $K$. A \textit{framed oriented virtual link} $L=K_1 \cup \cdots \cup K_n$ is an oriented virtual link with a choice of framing for each component. The \textit{virtual linking matrix} of  $L$ is the $n \times n$ matrix with $(i,j)$ entry $\vlk(K_i,K_j)$ for $i \ne j$ and $\fr(K_i)$ for $i=j$. 

\begin{theorem} Every diagram of a framed oriented link on $\Si$ corresponds to a diagram of a framed oriented virtual link. Conversely, for every diagram of a framed oriented virtual link, there is a diagram of a framed oriented link on some closed oriented surface $\Si$. Under this correspondence, the virtual linking matrix is equal to the linking matrix appearing in Proposition \ref{prop_link_matrix_rel_int}.
\end{theorem}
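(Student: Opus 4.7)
The plan is to invoke the Carter--Kamada--Saito correspondence cited in Section \ref{subsec:1-1} for the underlying (unframed) equivalence, extend it to framings, and then verify that the resulting virtual linking matrix coincides entry-by-entry with the linking matrix appearing in Proposition \ref{prop_link_matrix_rel_int}. Thus the correspondence part splits into three tasks: constructing the virtual diagram with framings from a framed link on $\Si$, constructing the framed link on $\Si$ from a framed virtual diagram, and identifying the matrices.

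For the first direction, given a framed oriented link $L = K_1 \cup \cdots \cup K_n \subset \Si \times I$, I would record each framing as a push-off $K_i'$ so that $K_i$ and $K_i'$ cobound an embedded annulus $A_i$ in $\Si \times I$. After a small isotopy, I may assume that the projection $p\co \Si \times I \to \Si$ is regular both on $L$ and on the annuli $A_i$. Applying Carter--Kamada--Saito to the diagram of $L$ then produces a virtual link diagram in the plane, and the projected annuli descend to virtual ribbons that carry the framing data.

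For the converse direction, I start from a framed oriented virtual link diagram whose framings are represented by virtual ribbons as in Figure \ref{fig_framing}. Carter--Kamada--Saito provides a closed oriented surface $\Si$ and a link diagram $D \subset \Si$ realizing the virtual link as $L \subset \Si \times I$. Each virtual ribbon lifts to an embedded annulus in $\Si \times I$; its other boundary component gives the parallel $K_i'$ defining the framing of $K_i$.

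Finally, I would verify equality of the matrices entry by entry. For the off-diagonal entries, the opening paragraph of Section \ref{sec:vln} observes that $\vlk(K_i,K_j) = \lk_\Si(K_i,K_j)$; both quantities count, with signs, the crossings where $K_i$ passes over $K_j$, which in $\Si \times I$ is exactly the data computing $\overline{\lk}_\Si(K_i,K_j)$. For the diagonal entries, both framing conventions normalize the $0$-framing by vanishing self-linking (which is well defined since parallel curves cobound a ribbon and hence have equal linking in both orders), with a $\ka$-framing obtained by inserting $|\ka|$ full twists of matching chirality. Hence $\fr(K_i)$ takes the same integer value in both descriptions, completing the identification.

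The main obstacle is not conceptual but a matter of careful bookkeeping: one must check that the virtual ribbon in the plane and the embedded annulus in $\Si \times I$ carry the same framing integer, and that crossing signs of the virtual diagram correctly encode which strand lies above in the $I$-direction. These checks follow from unpacking the stabilization/destabilization moves, but care is needed because the relative linking pairing $\lk_\Si(\cdot,\cdot)$ is not symmetric in general, so one must use the specific observation that it is symmetric on parallel curves.
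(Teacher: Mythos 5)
Your proposal is correct and follows essentially the same route as the paper: invoke the Carter--Kamada--Saito correspondence for the underlying unframed objects, replace integer framings by ribbons/annuli so that they carry over to virtual ribbons and back, and then match the matrix entries (off-diagonal via $\vlk = \lk_\Si$, diagonal via the shared $0$-framing normalization). The paper's proof is terser---it defers the matrix identification to Proposition \ref{prop_link_matrix_rel_int} and the remarks at the start of Section \ref{sec:vln}---but it uses the same ribbon device you describe.
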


\begin{proof} As is well known, every link in $\Si \times I$ corresponds to a virtual link. For a framed link in $\Si \times I$, we may replace the integer framing with a ribbon in $\Si \times I$. This gives a new link with twice as many components, which in turn corresponds to a virtual ribbon and a virtual link.  Conversely, given a framed virtual link, we may convert each framed component into a virtual ribbon. Using the construction shown in Figure \ref{vc-band-crossing}, we obtain a collection of ribbons in a thickened surface. Since the ribbons in $\Si \times I$ are mapped to virtual ribbons and vice versa, the framings are unchanged by the correspondence. The claim now follows from Proposition \ref{prop_link_matrix_rel_int}.
\end{proof}

\begin{remark}
For $4$-dimensional $2$-handlebodies, a handle slide alters the intersection form by a change of basis (see Section 5.1 in \cite{Gompf-Stipsicz}). As we shall see, the same is true for the relative intersection form of Proposition \ref{prop_link_matrix_rel_int}. 

Given a framed link $L \subset\Si \times I$ and
compact oriented 3-manifold $W$ with $\partial W =\Si$, let $X=(W \times I) \cup (h^2_1 \cup \cdots \cup h^2_n)$ be the $4$-manifold with 2-handles $h^2_1, \dots , h^2_n$ attached along $K_1 \cup \cdots \cup K_n$, the components of $L$. In sliding $h^2_i$ over $h_j^2$, the new handle has attaching sphere the framed knot $K_i^\star$, which is the band sum of $K_i$ and $K_j'$, where $K_j'$ is the parallel of $K_j$ given by the framing. Given an orientation of $L$, this operation is \textit{handle addition} if the band sum of $K_i$ and $K_j'$ respects their orientations and \textit{handle subtraction} otherwise. In the bases $B_Y$ for $H_2(X,Y)$ and $B_Z$ for $H_2(X,Z)$, the handle slide replaces $\beta_{i}$  with $\be_{i}\pm\be_{j}$ and $\be'_{i}$ with $\be'_{i}\pm\be'_{j}$, where the sign is ($+$) for handle addition and ($-$) for subtraction. The new component has framing 
\begin{align} \label{eqn_framing_virt}
\fr(K_i^\star) &=\fr(K_i)+\fr(K_j) \pm \lk_\Si(K_i,K_j) \pm \lk_\Si(K_j,K_i).
\end{align}
Therefore, the effect of a handle slide on linking matrix is to add (or subtract) row $j$ to row $i$ and column $j$ to column $i$.

All of this translates over to framed virtual links without loss of information.
Given a framed virtual link $L =K_1 \cup \cdots \cup K_n, $ a \textit{virtual handle slide} of $K_i$ over $K_j$ is defined by replacing $K_i$ by the band sum of $K_i$ and $K'_j$, where $K_j'$  is the parallel of $K_j$ given by its framing. The new component $K^\star_i$ has framing
\begin{align} \label{eqn_framing_virt}
\fr(K_i^\star) &=\fr(K_i)+\fr(K_j) \pm \vlk(K_i,K_j) \pm \vlk(K_j,K_i).
\end{align}
Just as for links in $\Sigma \times I$, the effect of a virtual handle slide on the virtual linking matrix is to add (or subtract) row $j$ to row $i$ and column $j$ to column $i$.
\end{remark}


\subsection{Virtual Kirby diagrams} 
In this subsection, we show how to realize the Gordon-Litherland pairing as the virtual linking matrix associated to a virtual Kirby diagram. By Theorem \ref{thm:isom}, this also realizes the relative intersection form of the mirror double branched cover. 

Recall that any spanning surface $F$ of a knot $K \subset \Si \times I$ can be deformed by an isotopy to a disk-band surface in $\Si \times I$. It consists of a single $0$-handle $F^0$ and a collection $F^1_1,\ldots,F_n^1$ of $1$-handles. Each $1$-handle $F_i$ is twisted by some number $\ka_i\in \ZZ$ of half twists, which are right-handed if $\ka_i\ge 0$ and left-handed if $\ka_i \le 0$. The disk-band surface may be drawn in the plane as a virtual disk-band surface with a single $0$-handle and $1$-handles. By abuse of notation, this will again be written as $F=F^0 \cup F_1^1 \cup \cdots\cup F_n^1$. It may be assumed that $F^0 \subset \RR^2$ is embedded in the lower half-plane $y \le 0$, that the $1$-handles all lie in the upper half-plane $y \ge 0$, and that the attaching spheres of all the $1$-handles lie on $y=0$.

A virtual link is now constructed from the virtual disk-band surface as follows. Rotate the configuration of $1$-handles in the upper half-plane around the $x$-axis in $\RR^3$ and place it in the lower half-plane $y \le 0$. Ignoring $F^0$, we see that there is a copy $V_i^1$ of $F_i^1$ in the lower half-plane. If $F_i^1$ crosses over $F_j^1$, then $V_j^1$ crosses over $V_i^1$. Every virtual crossing of bands appears as a virtual crossing of the corresponding bands in the lower half-plane. A half-twist in a band also appears the same in both half-planes. The union of the core $f_i$ of $F_i^1$ and $v_i$ of $V_i^1$ is a virtual knot diagram $K_i$. Set $\fr(K_i)=\ka_i$. 

Let $L_{F}$ be the framed virtual link $K_1 \cup \cdots \cup K_n$. It describes a relative handlebody structure for a 4-manifold, or rather family of 4-manifolds. Therefore, we regard $L_F$ as a \textit{virtual Kirby diagram.}

\begin{theorem} \label{thm:virtual_GL} The virtual linking matrix of $L_F$ is a matrix representing  the Gordon-Litherland pairing $\cG_F \co H_1(F;\ZZ) \times H_1(F;\ZZ) \to \ZZ$ and the relative intersection form $\cQ_F\co H_2(M_{\widehat{F}},Y;\ZZ) \times H_2(M_{\widehat{F}},Z;\ZZ) \to \ZZ$.  
\end{theorem}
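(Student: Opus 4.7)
The plan is to realize $L_F$ as a Kirby diagram for $M_{\wh{F}}$ viewed as a relative $2$-handlebody on $W\times I$, and then combine Proposition \ref{prop_link_matrix_rel_int} with Theorem \ref{thm:isom} to deduce both statements of the theorem.

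First I fix a compact oriented $3$-manifold $W$ with $\partial W=\Si$, and construct a relative handle decomposition of $M_{\wh{F}}$ on $W\times I$ with one $2$-handle for each band of $F$. Writing $F = F^0 \cup F_1^1 \cup \cdots \cup F_n^1$, I add the pieces of $\wh{F}$ to the branch locus stage by stage. The branched double cover of $W\times I$ along the pushed-in disk $\wh{F}^0$ is canonically diffeomorphic to $W\times I$. Each band $\wh{F}_i^1$ added to the branch locus then contributes a single $2$-handle to the branched cover. A careful local analysis shows that the attaching sphere of this handle is the preimage of the core $f_i$ under the branched covering projection; in the rotation-about-the-$x$-axis presentation given in the text, this preimage is precisely $K_i = f_i \cup v_i$, while the $\kappa_i$ half-twists of $F_i^1$ translate into the framing $\fr(K_i)=\kappa_i$. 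This is the direct extension of the classical Akbulut--Kirby recipe for the double branched cover of $B^4$ along a disk--band surface, with $W\times I$ replacing $B^4$.

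Given this handle description, Proposition \ref{prop_link_matrix_rel_int} applies directly. In the bases $B_Y$ and $B_Z$ built from the cores of the $2$-handles together with chosen capping surfaces into $Y = W \times \{0\}$ and $Z = W \times \{1\}$, the relative intersection form $\cQ_F$ is represented by the linking matrix of $L_F$ viewed as a framed link in $\Si\times I$. By the correspondence recalled in Section \ref{sec:vln} and the identity $\lk_\Si(\,\cdot\,,\,\cdot\,) = \vlk(\,\cdot\,,\,\cdot\,)$, this linking matrix is precisely the virtual linking matrix of $L_F$, which proves the $\cQ_F$ statement. For the $\cG_F$ statement I appeal to Theorem \ref{thm:isom}: the isomorphisms $\varphi_0$ and $\varphi_1$ there carry $B_Y$ and $B_Z$ to the core basis $\{f_1,\ldots,f_n\}$ of $H_1(F;\ZZ)$ and intertwine $\cQ_F$ with $\cG_F$, so the same virtual linking matrix represents $\cG_F$ in this basis.

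The main obstacle is the handle-theoretic step, that is, the precise identification of the attaching spheres and framings of the $2$-handles of $M_{\wh{F}}$ with the components $K_i$ of $L_F$ and their framings $\kappa_i$. This boils down to a local calculation near each band of $F$. A classical band crossing contributes the expected signed crossings between the corresponding attaching circles, and a virtual band crossing contributes no new $2$-handle: it merely records a $1$-handle attached to $S^2$ to build up $\Si$, which in turn produces the virtual crossings between the $K_i$ in the plane. The twisting of each band is transferred into the framing via the standard model of a $2$-handle over a twisted band, completing the identification.
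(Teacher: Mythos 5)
Your proposal is correct, but it follows a genuinely different route from the paper's main proof---indeed, it is essentially the ``alternate proof'' that the paper itself sketches at the end of Section~\ref{sec-6} via the Akbulut--Kirby construction. The paper's main argument runs in the opposite direction: it first proves the $\cG_F$ identification directly from Equation~\eqref{defn:GL}, computing $\cG_F(\al_i,\al_j)$ in the basis of band cores and showing that (a) the diagonal entry equals the half-twist count $\ka_i$, (b) transverse intersections of $\al_i$ and $\al_j$ inside the disk $F^0$ contribute zero (seen by lifting to the orientation double cover $\wt F$), and (c) each band crossing contributes its local crossing sign; the $\cQ_F$ claim then follows from Theorem~\ref{thm:isom}. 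You instead start on the $\cQ_F$ side, building a relative $2$-handle decomposition of $M_{\wh F}$ over $W\times I$ with one $2$-handle per band, matching the attaching spheres to the $K_i$ and the framings to $\ka_i$, reading off $\cQ_F$ via Proposition~\ref{prop_link_matrix_rel_int}, and only then passing to $\cG_F$ through Theorem~\ref{thm:isom}. Both routes are legitimate; the paper's direct computation is more self-contained, whereas your handle-theoretic version makes the connection to the classical Akbulut--Kirby recipe for branched double covers explicit. The one step in your sketch that deserves more care is the claim that $\varphi_0,\varphi_1$ carry $B_Y,B_Z$ to the band-core basis $\{[\al_1],\ldots,[\al_n]\}$ of $H_1(F;\ZZ)$; one must match the handle-theoretic generators $\be_i=\text{core}(h_i^2)\cup(-S_i)$ with the explicit cylinder-built $2$-cycles $C_0([\al_i])$ from the proof of Theorem~\ref{thm:isom}, which is plausible but not immediate.
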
 
\begin{proof} Theorem \ref{thm:isom} implies that the second claim follows from the first. For the first, there is a basis $[\al_1],\ldots,[\al_n]$ of the first homology of $F=F^0 \cup F_1^1 \cup \cdots \cup F_n^1$, where $\al_i$ is a simple closed curve consisting of the core $f_i$ of $F_i^1$ and a simple path in $F^0$ connecting the ends of $f_i$. We first show that $\cG_F(\al_i,\al_i)$ is the framing coefficient of $K_i$:
$$\cG_F(\al_i,\al_i)  = \lk_{\Si}(\tau \al_i,\al_i)= \ka_i.$$
The double cover $\wt{F}$ of $F$ has two copies $(F^0)',(F^0)''$ of $F^0$ and two copies $(F_i^1)',(F_i^1)''$ of each $F_i^1$. One of the copies, say $(F^0)'$ lies slightly below $F^0$ in $\Si \times I$ and the other one, $(F^0)''$ lies slightly above.  For $i \ne j$, contributions to $\cG_F(\al_i,\al_j)$ come from band crossings between $F_i^1$ and $F_j^1$ and from the transverse intersections of $\al_i,\al_j$ on $F^0$. Let $x$ be an intersection point of $\al_i$ and $\al_j$. Lifting a small ball $B$ centered at $x$ in $F$ to $\wt{F}$ gives two small balls $B' \subset (F^0)',B'' \subset (F^0)''$ that are centered at the lifts of $x',x''$ of $x$. Observe that $\tau\al_i$ intersects each of $B'$ and $B''$ in an arc passing through the lifts of $x$. Only the arc passing through the higher sheet $B''$ contributes to $\lk_{\Si}(\tau\al_i,\al_j)$, and this contribution is precisely the local intersection number at $p(x) \in \Si$ of $p_*(\al_i)$ and $p_*(\al_j)$. Consequently, the contribution of any transverse intersection of $\al_i$ and $\al_j$ in $F^0$ to $\cG_F(\al_i,\al_j)$ is $0$.

Now, consider a band crossing of $F_i^1$ and $F_j^1$. Suppose that the cores $f_i,f_j$ have local crossing sign $\ep$ at this crossing. If $F_i^1$ crosses over $F_j^1$, then the contribution to $\cG_F(\al_i,\al_j)=\lk_{\Si}(\tau \al_i,\al_j)-p_*(\al_i)\cdot p_*(\al_j)$ is $2 \ep-\ep=\ep$. On the other hand, a band crossing of $F_j^1$ over $F_i^1$ contributes to $\cG_F(\al_i,\al_j)$ only in the term $-p_*(\al_i)\cdot p_*(\al_j)=p_*(\al_j)\cdot p_*(\al_i)$, and this contribution is again $\ep$. Thus, $\cG_F(\al_i,\al_j)$ is the sum of all the local crossings signs of band crossings between $F_i^1$ and $F_j^1$. By construction of $L_F$, this is exactly $\vlk(K_i,K_j)$ and the first claim follows immediately. 
\end{proof}

\begin{figure}[h]
\centering
 \includegraphics[scale=0.96]{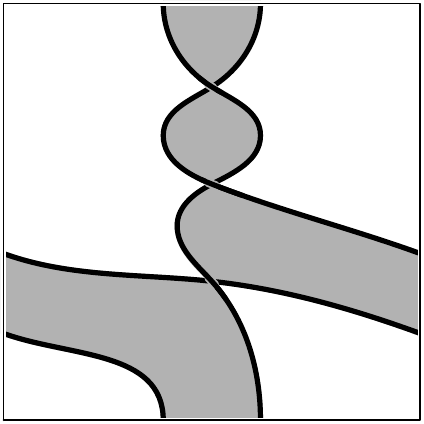}\hspace{.6cm}
  \includegraphics[scale=.52]{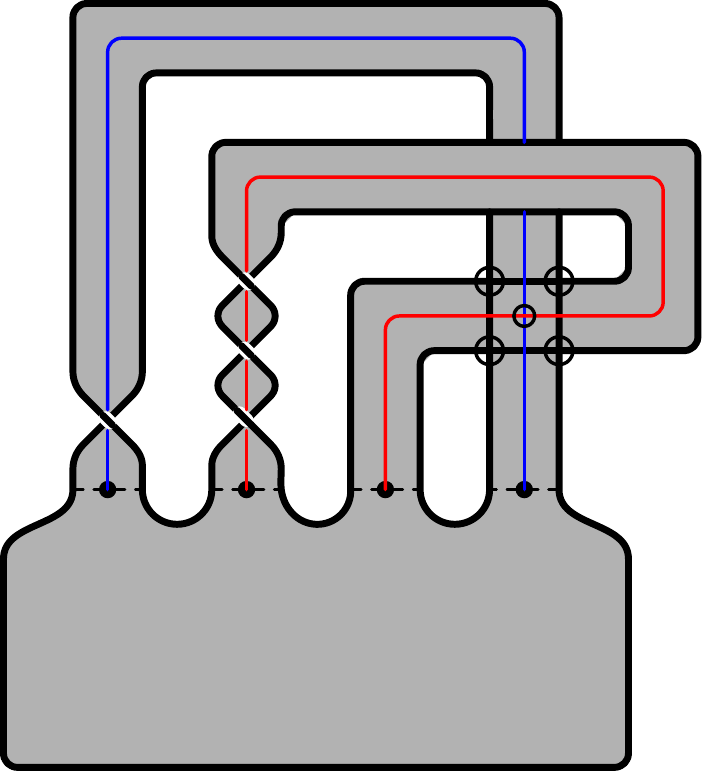}\hspace{.6cm}
   \includegraphics[scale=.90]{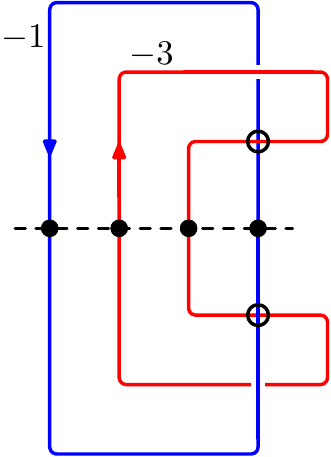} 
 \caption{Construction of the framed virtual link $L_F$ for a checkerboard surface $F$ of 3.7.} \label{fig_3_7_framed_link_1}
\end{figure}

\begin{figure}[h]
\centering
 \includegraphics[scale=0.96]{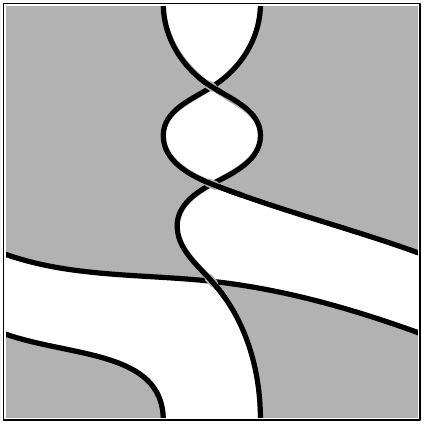}\hspace{.6cm}
  \includegraphics[scale=.52]{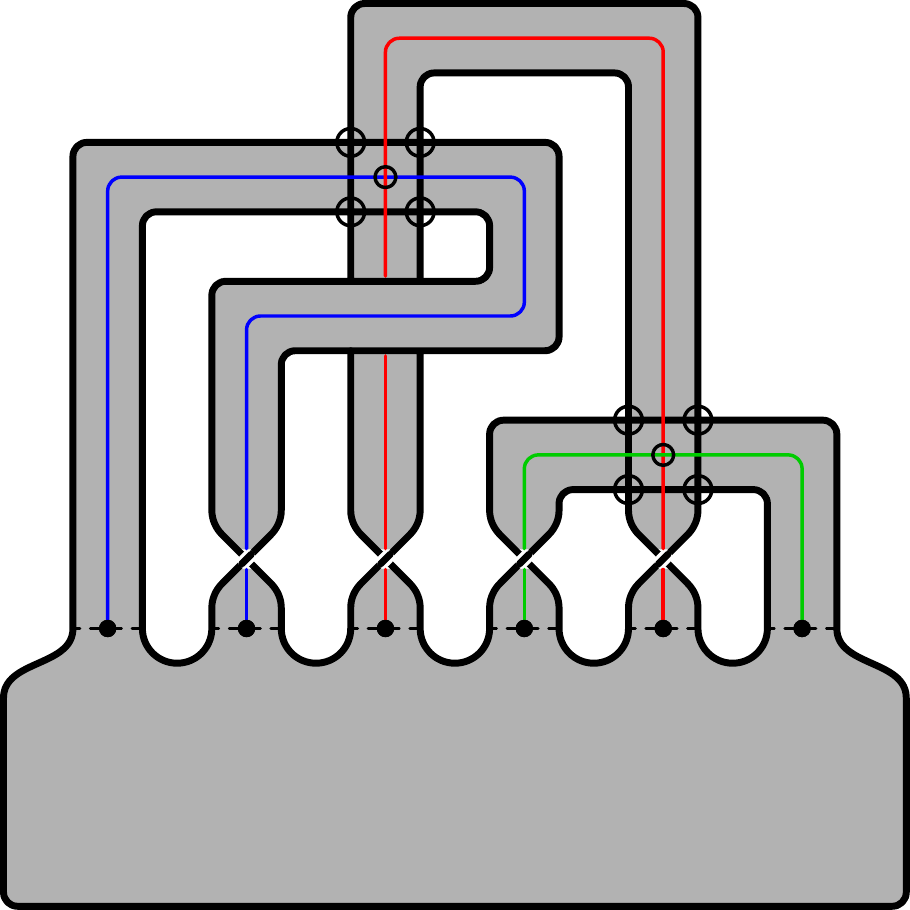}\hspace{.6cm}
   \includegraphics[scale=.90]{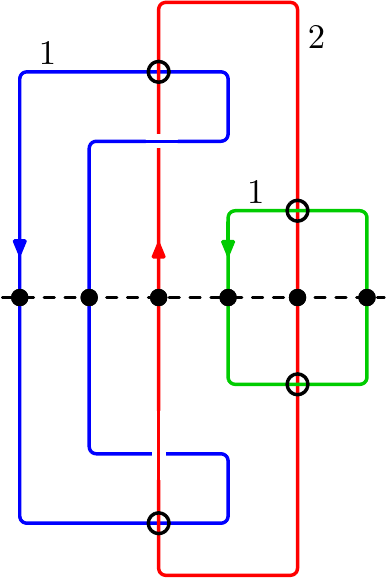} 
 \caption{Construction of the framed virtual link $L_{F^*}$ for the dual checkerboard surface $F^*$ of 3.7.} \label{fig_3_7_framed_link_2}
\end{figure}

\begin{example}\label{ex_framed_virtual_link_3_7} 
The construction of the framed virtual links for the two checkerboard surfaces $F$ and its dual $F^*$ of Example \ref{ex:3-7} is given in Figures \ref{fig_3_7_framed_link_1} and \ref{fig_3_7_framed_link_2}. Starting with the checkerboard surfaces (left), we isotope and convert them to virtual disk-band surfaces (middle). The framed virtual links $L_F$ and $L_{F^*}$ are drawn on the right.

 The virtual linking matrices of $L_F$ and $L_{F^*}$, respectively, are:
\[
\begin{bmatrix}
-1 & 1 \\ 1 & -3
\end{bmatrix}, 
\begin{bmatrix} 1 & 1 & 0 \\ 1 & 2 & 0 \\ 0 & 0 & 1 \end{bmatrix}.
\]
By Theorem \ref{thm:virtual_GL}, these matrices represent the relative intersection forms $\cQ_F$ and $\cQ_{F^*}$, respectively. Therefore, $\sig(\cQ_F) =-2$, and $\sig(\cQ_{F^*}) =3$. Note that this agrees with Example \ref{ex:3-7}, where we computed $\sig(\cG_{F})$ and $\sig(\cG_{F^*})$, and showed that $\si(K,F)=0,\ \det(K,F)= 2$ and $\si(K,F^*)= 2,\ \det(K,F^*)=1$. 
\hfill $\Diamond$ \end{example}

\begin{figure}[ht]
\centering
\includegraphics[scale=.5]{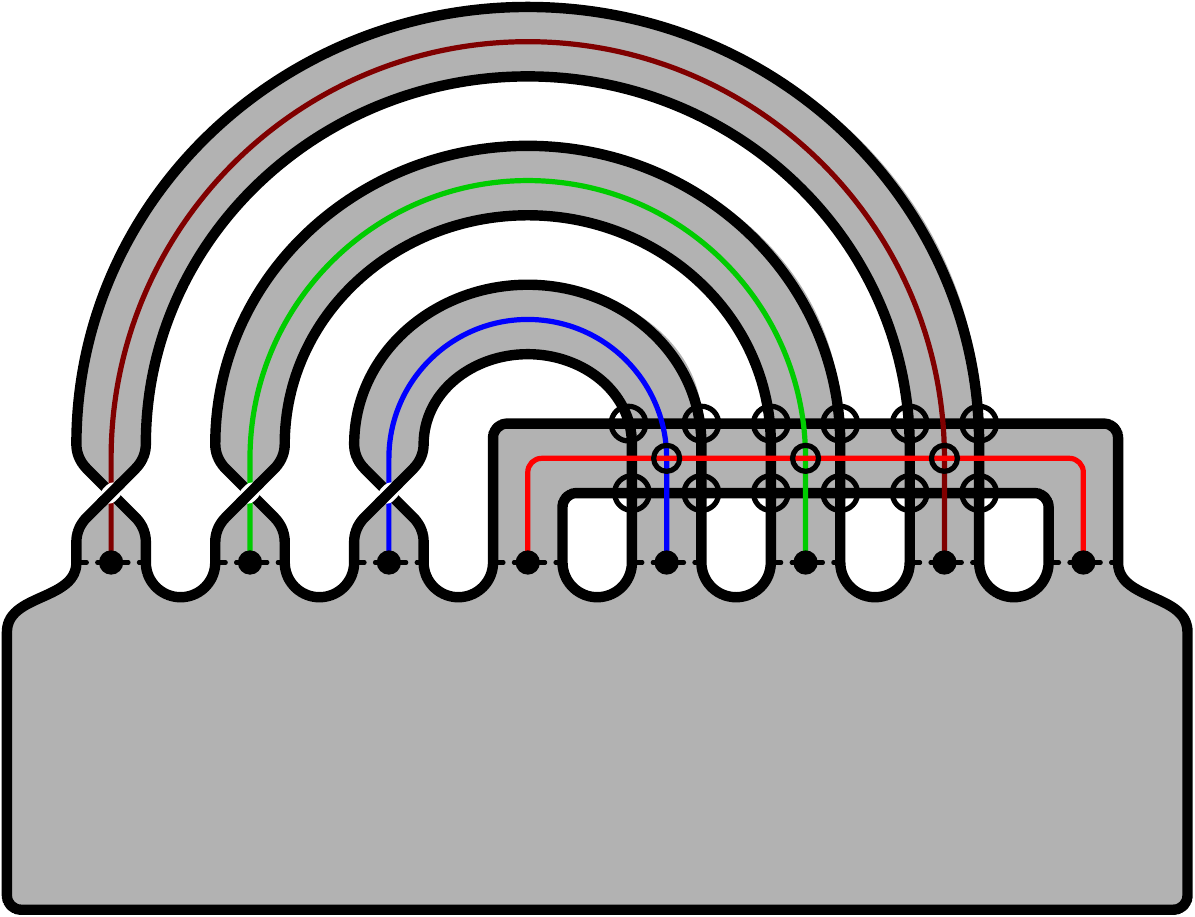}
\hspace{.2cm}
 \includegraphics[scale=1.4]{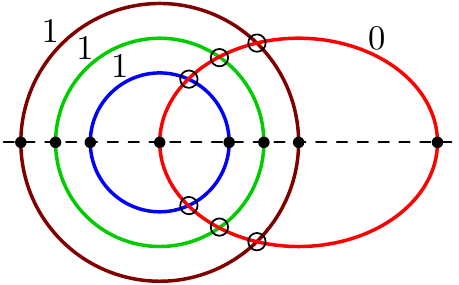}
\caption{The virtual disk-band surface $F^*$, and the virtual Kirby diagram $L_{F^*}$ for the non-minimal genus representative of the trefoil.} \label{non-minimal-trefoil-kirby}
\end{figure}

\begin{example}
For the trefoil knot with a diagram on the torus in Figure \ref{non-minimal-trefoil}, the disk-band surface and the framed virtual link $L_{F^*}$ are given in Figure \ref{non-minimal-trefoil-kirby}. The framed virtual link $L_F$ is the unknot with framing $-3$ (unpictured) and  virtual linking matrix $[-3]$. By Theorem \ref{thm:virtual_GL}, this matrix represents the relative intersection form $\cQ_F$, which has signature $\sig(\cQ_F) =-1.$

The virtual linking matrix of $L_{F^*}$ is given by
$$\begin{bmatrix} 1 & 0 & 0 &0 \\ 0&1 & 0 & 0 \\ 0 & 0 & 1&0\\ 0 & 0 & 0&0 \end{bmatrix}.$$
By Theorem \ref{thm:virtual_GL}, this matrix represents the relative intersection form $\cQ_{F^*}$,  which has signature $\sig(\cQ_{F^*}) =3.$ These computations agree with Example \ref{ex:non-minimal-trefoil}, where we computed $\sig(\cG_F)$ and $\sig(\cG_{F^*})$ and showed that $\si(K,F)=2, \det(K,F)=3$ and $\si(K,F^*)=3, \det(K,F^*) = 0$.
\hfill $\Diamond$ \end{example}

In \cite{Akbulut-Kirby-1980}, Akbulut and Kirby gave a method for constructing handlebody decompositions for branched covers over surfaces in $4$-manifolds. A minor adaptation of their method gives an alternate proof of Theorem \ref{thm:virtual_GL}, and we give a brief sketch. Push the interior of the spanning surface $F \subset \Si \times I$ slightly into the interior of $W \times I$. This may be assumed to occur in a collar $C \approx \partial (W \times I)\times I$, so that $F$ is pushed into $\Si \times I \times I$. Cutting out the trace of the isotopy that pushes $F$ into the interior of $W \times I$ gives $W \times I$ again, but with a thickened copy of $F$ visible in $\Si \times I$. Call this thickened copy $P$, where $P$ is the quotient space $(F \times [0,1])/[(x,t) \sim (x,t'), \forall t,t' \in I]$. As usual, we have a handle decomposition $F=F^0 \cup F_1^1 \cup \cdots \cup F_n^1$ as a disk-band surface. This gives a handle decomposition of $P$, $P \approx (F^0 \times I/\sim) \cup (F^1_1 \times I/\sim) \cup \cdots \cup (F_n^1 \times I/\sim)$. 

Recall that the space $M_{\widehat{F}}$ is constructed from two copies $W_1 \times I$, $W_2 \times I$ of $W \times I$ with the trace of the isotopy removed. We denote the copy of $F$ in $W_2 \times I$ by $V$ and  its handle decomposition by $V=V^0 \cup V_1^1 \cup \cdots \cup V_n^1$. The copy of $P$ in $W_2 \times I$ is denoted by $Q$. Then $M_{\widehat{F}}$ is constructed by identifying the corresponding handles of $P$ and $Q$. Observe that when the $0$-handles of $P$ and $Q$ are identified, the attaching spheres of $F_i^1$ and $V_i^1$ coincide. Let $K_i$ denote the union of the cores of $F_i^1$ and $V_i^1$. The handles $F_i^1 \times I/\sim$ and $V_i^1 \times I/\sim$ are identified by attaching a $2$-handle along $K_i$. The framing of $K_i$ is $\ka_i$, where $\ka_i \in \ZZ$ is the number of half-twists of the band $F_i^1$, with $\ka_i>0$ for right-hand twists and  $\ka_i<0$ for left-hand twists. 

Now, let $h_1^2,\ldots,h_n^2$ be the $2$-handles attached along the knots $\al_1,\ldots,\al_n$. As in the proof of Proposition \ref{prop_link_matrix_rel_int}, the cores of $h_1^2,\ldots,h_n^2$ correspond to generators of $H_2(M_{\widehat{F}},X;\ZZ)$ and $H_2(M_{\widehat{F}},Y;\ZZ)$. Furthermore, the linking matrix of the framed link $\al_1 \cup \cdots \cup \al_n$ is the matrix of the relative intersection form in this basis. By construction of the handle decomposition of $M_{\widehat{F}}$, this is identical to the virtual linking matrix of $L_F$.

\subsection*{Concluding remarks} The methods developed in this paper have a number of applications, and here we mention a few of them. For instance, the Gordon-Litherland pairing has already been used to give a topological characterization of alternating links in thickened surfaces in \cite{Boden-Karimi-2020}. 
One can also use the tools developed in this paper to profitably explore the crosscap numbers and the nonorientable 4-genus of virtual knots. In the recent paper \cite{Boden-Karimi-2021}, we investigate the concordance properties of signature invariants of checkerboard colorable knots in thickened surfaces.

\subsection*{Acknowledgements}
The first author was partially funded by the Natural Sciences and Engineering Research Council of Canada, and the second author was partially funded by The Ohio State University, Marion Campus. We would like to thank Robin Gaudreau, Aaron Kaestner, Andy Nicas, Danny Ruberman, Will Rushworth, Robb Todd, and Lindsay White for their feedback and input.

\newpage
\begin{bibdiv}
\begin{biblist}

\bib{Akbulut-Kirby-1980}{article}{
      author={Akbulut, Selman},
      author={Kirby, Robion},
       title={Branched covers of surfaces in {$4$}-manifolds},
        date={1979/80},
        ISSN={0025-5831},
     journal={Math. Ann.},
      volume={252},
      number={2},
       pages={111\ndash 131},
         url={https://doi-org.libaccess.lib.mcmaster.ca/10.1007/BF01420118},
      review={\MR{593626}},
}

\bib{Boden-Chrisman-Gaudreau-2017a}{article}{
      author={Boden, Hans~U.},
      author={Chrisman, Micah},
      author={Gaudreau, Robin},
       title={Signature and concordance of virtual knots},
        date={2020},
        ISSN={0022-2518},
     journal={Indiana Univ. Math. J.},
      volume={69},
      number={7},
       pages={2395\ndash 2459},
         url={https://doi.org/10.1512/iumj.2020.69.8215},
      review={\MR{4195608}},
}

\bib{Boden-Gaudreau-Harper-2016}{article}{
      author={Boden, Hans~U.},
      author={Gaudreau, Robin~I.},
      author={Harper, Eric},
      author={Nicas, Andrew~J.},
      author={White, Lindsay},
       title={Virtual knot groups and almost classical knots},
        date={2017},
     journal={Fundamenta Mathematicae},
      volume={138},
       pages={101\ndash 142},
      review={\MR{3342135}},
}

\bib{Boden-Karimi-2019}{misc}{
      author={Boden, Hans~U.},
      author={Karimi, Homayun},
       title={The {J}ones-{K}rushkal polynomial and minimal diagrams of surface
  links},
        date={2019},
        note={\href{https://arxiv.org/pdf/1908.06453.pdf}{ArXiv/1908.06453}, to
  appear in Ann. Inst. Fourier (Grenoble)},
}

\bib{Boden-Karimi-2020}{misc}{
      author={Boden, Hans~U.},
      author={Karimi, Homayun},
       title={A characterization of alternating links in thickened surfaces},
        date={2020},
        note={\href{https://arxiv.org/pdf/2010.14030.pdf}{ArXiv/2010.14030}, to
  appear in Proc. Roy. Soc. Edinburgh Sect. A},
}

\bib{Boden-Karimi-2021}{misc}{
      author={Boden, Hans~U.},
      author={Karimi, Homayun},
       title={Concordance invariants of null-homologous knots in thickened
  surfaces},
        date={2021},
        note={\href{https://arxiv.org/pdf/2111.07409.pdf}{ArXiv/2111.07409}},
}

\bib{Bredon-1993}{book}{
      author={Bredon, Glen~E.},
       title={Topology and geometry},
      series={Graduate Texts in Mathematics},
   publisher={Springer-Verlag, New York},
        date={1993},
      volume={139},
        ISBN={0-387-97926-3},
         url={https://doi.org/10.1007/978-1-4757-6848-0},
      review={\MR{1224675}},
}

\bib{Burde-Zieschang-Heusener}{book}{
      author={Burde, Gerhard},
      author={Zieschang, Heiner},
      author={Heusener, Michael},
       title={Knots},
     edition={extended},
      series={De Gruyter Studies in Mathematics},
   publisher={De Gruyter, Berlin},
        date={2014},
      volume={5},
        ISBN={978-3-11-027074-7; 978-3-11-027078-5},
      review={\MR{3156509}},
}

\bib{Carter-Kamada-Saito}{article}{
      author={Carter, J.~Scott},
      author={Kamada, Seiichi},
      author={Saito, Masahico},
       title={Stable equivalence of knots on surfaces and virtual knot
  cobordisms},
        date={2002},
        ISSN={0218-2165},
     journal={J. Knot Theory Ramifications},
      volume={11},
      number={3},
       pages={311\ndash 322},
         url={http://dx.doi.org/10.1142/S0218216502001639},
        note={Knots 2000 Korea, Vol. 1 (Yongpyong)},
      review={\MR{1905687 (2003f:57011)}},
}

\bib{Chrisman-2017}{article}{
      author={Chrisman, Micah},
       title={Virtual {S}eifert surfaces},
        date={2019},
        ISSN={0218-2165},
     journal={J. Knot Theory Ramifications},
      volume={28},
      number={6},
       pages={1950039, 33},
         url={https://doi.org/10.1142/S0218216519500391},
      review={\MR{3956355}},
}

\bib{Chrisman-2020}{misc}{
      author={Chrisman, Micah},
       title={Milnor's concordance invariants for knots on surfaces},
        date={2020},
        note={\href{https://arxiv.org/pdf/2002.01505 }{ArXiv/2002.01505 }, to
  appear in Alg. Geom. Topol.},
}

\bib{chrisman-table}{misc}{
      author={Chrisman, Micah},
       title={Table of checkerboard colorable virtual knots up to six
  crossings},
        date={2021},
  note={\href{https://micah46.wixsite.com/micahknots/checkerboard-colorable-table}{micah46.wixsite.com/micahknots/checkerboard-colorable-table}},
}

\bib{Cimasoni-Turaev}{article}{
      author={Cimasoni, David},
      author={Turaev, Vladimir},
       title={A generalization of several classical invariants of links},
        date={2007},
        ISSN={0030-6126},
     journal={Osaka J. Math.},
      volume={44},
      number={3},
       pages={531\ndash 561},
         url={http://projecteuclid.org/euclid.ojm/1189717421},
      review={\MR{2360939}},
}

\bib{Cooper}{misc}{
      author={Cooper, Daryl},
       title={Signatures of surfaces with applications to knot and link
  cobordism},
        date={1982},
        note={Ph.D. Thesis, University of Warwick},
}

\bib{Dold-1980}{book}{
      author={Dold, Albrecht},
       title={Lectures on algebraic topology},
     edition={Second},
      series={Grundlehren der Mathematischen Wissenschaften [Fundamental
  Principles of Mathematical Sciences]},
   publisher={Springer-Verlag, Berlin-New York},
        date={1980},
      volume={200},
        ISBN={3-540-10369-4},
      review={\MR{606196}},
}

\bib{Gompf-Stipsicz}{book}{
      author={Gompf, Robert~E.},
      author={Stipsicz, Andr\'{a}s~I.},
       title={{$4$}-manifolds and {K}irby calculus},
      series={Graduate Studies in Mathematics},
   publisher={American Mathematical Society, Providence, RI},
        date={1999},
      volume={20},
        ISBN={0-8218-0994-6},
         url={https://doi.org/10.1090/gsm/020},
      review={\MR{1707327}},
}

\bib{GL-1978}{article}{
      author={Gordon, Cameron~McA.},
      author={Litherland, Richard~A.},
       title={On the signature of a link},
        date={1978},
        ISSN={0020-9910},
     journal={Invent. Math.},
      volume={47},
      number={1},
       pages={53\ndash 69},
         url={http://dx.doi.org/10.1007/BF01609479},
      review={\MR{0500905}},
}

\bib{green}{misc}{
      author={Green, Jeremy},
       title={A table of virtual knots},
        date={2004},
         url={http://www.math.toronto.edu/drorbn/Students/GreenJ},
  note={\href{http://www.math.toronto.edu/drorbn/Students/GreenJ}{www.math.toronto.edu/drorbn/Students/GreenJ}},
}

\bib{greene}{article}{
      author={Greene, Joshua~Evan},
       title={Alternating links and definite surfaces},
        date={2017},
        ISSN={0012-7094},
     journal={Duke Math. J.},
      volume={166},
      number={11},
       pages={2133\ndash 2151},
  url={https://doi-org.libaccess.lib.mcmaster.ca/10.1215/00127094-2017-0004},
        note={With an appendix by Andr\'as Juh\'asz and Marc Lackenby},
      review={\MR{3694566}},
}

\bib{Hatcher-2002}{book}{
      author={Hatcher, Allen},
       title={Algebraic topology},
   publisher={Cambridge University Press, Cambridge},
        date={2002},
        ISBN={0-521-79160-X; 0-521-79540-0},
      review={\MR{1867354}},
}

\bib{Im-Lee-Lee-2010}{article}{
      author={Im, Young~Ho},
      author={Lee, Kyeonghui},
      author={Lee, Sang~Youl},
       title={Signature, nullity and determinant of checkerboard colorable
  virtual links},
        date={2010},
        ISSN={0218-2165},
     journal={J. Knot Theory Ramifications},
      volume={19},
      number={8},
       pages={1093\ndash 1114},
         url={http://dx.doi.org/10.1142/S0218216510008315},
      review={\MR{2718629}},
}

\bib{Karimi-kh}{article}{
      author={Karimi, Homayun},
       title={The {K}hovanov {H}omology of {A}lternating {V}irtual {L}inks},
        date={2021},
        ISSN={0026-2285},
     journal={Michigan Math. J.},
      volume={70},
      number={4},
       pages={749\ndash 778},
  url={https://doi-org.libaccess.lib.mcmaster.ca/10.1307/mmj/1599811479},
      review={\MR{4332676}},
}

\bib{Kuperberg}{article}{
      author={Kuperberg, Greg},
       title={What is a virtual link?},
        date={2003},
        ISSN={1472-2747},
     journal={Algebr. Geom. Topol.},
      volume={3},
       pages={587\ndash 591 (electronic)},
         url={http://dx.doi.org/10.2140/agt.2003.3.587},
      review={\MR{1997331 (2004f:57012)}},
}

\bib{Mandelbaum-Moishezon-1983}{incollection}{
      author={Mandelbaum, Richard},
      author={Moishezon, Boris},
       title={Numerical invariants of links in {$3$}-manifolds},
        date={1983},
   booktitle={Low-dimensional topology ({S}an {F}rancisco, {C}alif., 1981)},
      series={Contemp. Math.},
      volume={20},
   publisher={Amer. Math. Soc., Providence, RI},
       pages={285\ndash 304},
  url={https://doi-org.libaccess.lib.mcmaster.ca/10.1090/conm/020/718148},
      review={\MR{718148}},
}

\bib{Manturov-2013}{article}{
      author={Manturov, Vassily~O.},
       title={Parity and projection from virtual knots to classical knots},
        date={2013},
        ISSN={0218-2165},
     journal={J. Knot Theory Ramifications},
      volume={22},
      number={9},
       pages={1350044, 20},
         url={http://dx.doi.org/10.1142/S0218216513500442},
      review={\MR{3105303}},
}

\bib{Murasugi-1965}{article}{
      author={Murasugi, Kunio},
       title={On a certain numerical invariant of link types},
        date={1965},
        ISSN={0002-9947},
     journal={Trans. Amer. Math. Soc.},
      volume={117},
       pages={387\ndash 422},
         url={https://doi-org.libaccess.lib.mcmaster.ca/10.2307/1994215},
      review={\MR{0171275}},
}

\bib{Murasugi-1970}{article}{
      author={Murasugi, Kunio},
       title={On the signature of links},
        date={1970},
        ISSN={0040-9383},
     journal={Topology},
      volume={9},
       pages={283\ndash 298},
         url={https://doi.org/10.1016/0040-9383(70)90018-2},
      review={\MR{0261585}},
}

\bib{Yasuhara-2014}{article}{
      author={Yasuhara, Akira},
       title={An elementary proof for that all unoriented spanning surfaces of
  a link are related by attaching/deleting tubes and {M}\"obius bands},
        date={2014},
        ISSN={0218-2165},
     journal={J. Knot Theory Ramifications},
      volume={23},
      number={1},
       pages={1450004, 5},
         url={https://doi.org/10.1142/S0218216514500047},
      review={\MR{3190128}},
}

\end{biblist}
\end{bibdiv}

\end{document}